\def\IR{\mathbb{R}}
\def\IC{\mathbb{C}}
\def\IN{\mathbb{N}}
\def\IZ{\mathbb{Z}}
\def\A{\mathcal{A}}
\def\B{\mathcal{B}}
\def\D{\mathcal{D}}
\def\E{\mathcal{E}}
\def\H{\mathcal{H}}
\def\K{\mathcal{K}}
\def\a{\mathfrak{a}}
\def\b{\mathfrak{b}}
\def\1{\mathbbmss{1}}
\def\lra{\longrightarrow}
\def\downto{\downarrow}
\DeclareMathOperator{\supp}{supp}
\newcommand{\id}{\mathrm{id}}
\newcommand{\Deg}{\mathrm{Deg}}
\DeclareMathOperator{\sgn}{sgn}
\renewcommand{\Re}{\operatorname{Re}}
\renewcommand{\Im}{\operatorname{Im}}
\def\loc{\mathrm{loc}}
\providecommand{\abs}[1]{\lvert#1\rvert}
\providecommand{\norm}[1]{\lVert#1\rVert}
\renewcommand{\epsilon}{\varepsilon}
\renewcommand{\phi}{\varphi}
\def\blfootnote{\gdef\@thefnmark{}\@footnotetext}
\newtheoremstyle{Beispiel}{}{}{}{}{\bfseries}{:}{ }{}
\theoremstyle{Beispiel}
\newtheorem{example}{Example}[section]
\newtheorem*{remark}{Remark}
\newtheoremstyle{Satz}{}{}{\itshape}{}{\bfseries}{:}{\newline}{}
\theoremstyle{Satz}
\newtheorem{proposition}[example]{Proposition}
\newtheorem{definition}[example]{Definition}
\newtheorem{theorem}[example]{Theorem}
\newtheorem{lemma}[example]{Lemma}
\newtheorem{corollary}[example]{Corollary}
\begin{document}
\frenchspacing 
\selectlanguage{english}
\title{Uniqueness of form extensions and domination of semigroups}
\author{Melchior Wirth}
\date{}

\maketitle

\begin{abstract}
In this article, we study questions of uniqueness of form extension for certain magnetic Schrödinger forms. The method is based on the theory of ordered Hilbert spaces and the concept of domination of semigroups. We review this concept in an abstract setting and give a characterization in terms of the associated forms. Then we use it to prove a theorem that transfers uniqueness of form extension of a dominating form to that of a dominated form. This result is applied in two concrete situations: magnetic Schrödinger forms on graphs and on domains in Euclidean space.
\end{abstract}

\blfootnote{2010 \textit{Mathematics Subject Classification.} Primary 47A63, Secondary 35J25, 35R02, 47B65.\\
\textit{Key words and phrases.} symmetrization, domination, form core, magnetic Schrödinger operator.}

\begingroup
\def\addvspace#1{}
\tableofcontents
\endgroup

\section{Introduction}
Uniqueness of extension of formal differential expressions has long been of interest. In particular, in connection to quantum mechanics it is an object of study whether a given differential expression for the Schrödinger operator determines a unique self-adjoint realization and hence a unique time evolution via its unitary group.

Questions of this type have been studied at least since the '50s intensively, producing an immense output of results about essential self-adjointness and related questions as the coincidence of minimal and maximal form of a given Schrödinger operator. While the case of the Schrödinger operator for a free particle, given by $-\Delta$, is object of classical theory of elliptic differential operators (with constant coefficients) and quite easy to handle, Schrödinger operators with electric and magnetic potential are far more complicated and interesting objects of study.

The Schrödinger operator for a particle in an electric field with potential $V$ and a magnetic field with potential $b$ is formally given by
\begin{align*}
H_{b,V}=(\nabla-ib)^2+V.
\end{align*}
A closer study reveals that many results on uniqueness of realizations of this magnetic Schrö\-din\-ger operator crucially rely on the same technique, domination of semigroups, a concept going back to Kato (\cite{Kat72}), Simon (\cite{Sim77}) and Hess, Schrader, Uhlenbrock (\cite{HSU}). It is a generalization of Kato's inequality
\begin{align*}
\Delta\abs{u}\geq\Re(\overline{\sgn u}\Delta u),
\end{align*}
where the Laplacian on the right-hand side is replaced by the magnetic Schrö\-din\-ger operator $H_{b,V}$:
\begin{align*}
\Delta\abs{u}\geq \Re(\overline{\sgn u}H_{b,V}u).
\end{align*}
It was a key observation by Simon that this inequality is essentially a statement about semigroups, namely, under some conditions on the domains of definition, the generalized Kato's inequality for $\Delta$ and $H_{b,V}$ is equivalent to
\begin{align*}
\abs{e^{tH_{b,V}}u}\leq e^{t\Delta}\abs{u}.
\end{align*}
In this light, Kato's inequality for the Laplacian is a reformulation of the fact that the generated heat semigroup is positivity preserving, that is, $e^{t\Delta}u\geq 0$ for all $u\geq 0$.

Later, Ouhabaz (\cite{Ouh96}) gave a characterization of domination of semigroups in which the distributional Kato's inequality was replaced by an inequality for the associated quadratic forms $\a$, $\b$:
\begin{align*}
\Re \a(u,v\sgn u)\geq\b(\abs{u},v).
\end{align*}
Moreover, he observed that the concept of domination can also be applied to the case when $\a$ is defined on vector-valued $L^2$-functions.

The utility of domination is grounded in the fact that it allows to transfer uniqueness statements of the dominating operator to such of the dominated operator. It was the aim of this article to give a precise formulation and proof of this often applied concept. More concretely we prove in our main theorem (Theorem \ref{uniqueness_forms}) that the coincidence of minimal and maximal form for the dominating operator implies the coincidence of minimal and maximal form for the dominated operator.

There is a wide range of possible applications. As two examples, we study magnetic Schrödinger forms on vector bundles over graphs as defined in \cite{MT15} and magnetic Schrödinger forms on domains in Euclidean space. A particular strength of this approach in the Euclidean case is that it is quite robust with respect to the regularity of the magnetic and electric potential $b$ and $V$. Indeed, domination follows from a pointwise inequality, and except for the closedness of the Schrödinger forms, regularity will not be much of a concern (in particular, we will only make integrability assumptions on the coefficients and no differentiability or Lipschitz property is demanded).

However, we believe that our results are not limited to these two cases, but might as well be applicable in situations as diverse as:
\begin{itemize}
\item magnetic forms on Riemannian manifolds (cf. \cite{Sch01}, \cite{BMS02}),
\item magnetic forms on resistance spaces (cf. \cite{HR15}), in particular, magnetic forms on fractals (cf. \cite{HT13}),
\item forms associated to the Hodge-de Rham Laplacian on $p$-forms on a Riemannian manifold (cf. \cite{Gaf51}),
\item forms associated to higher-order de-Rham Laplacians on graphs.
\end{itemize}

The organization of this article is as follows: In Chapter \ref{chap_positivity} we review the basics of ordered Hilbert spaces and domination of operators, including an abstract version of the characterization of domination of semigroups in terms of the associated forms (Theorem \ref{thm_char_domination}). In Chapter \ref{chap_uniqueness_criterion} we prove the main theorem of this article, a criterion for form uniqueness in terms of domination. In Chapter \ref{Applications} we discuss the above mentioned applications, namely Schrödinger forms on vector bundles over graphs in Section \ref{Magnetic Schrödinger operators on graphs} and Schrödinger forms on domains in Euclidean space in Section \ref{Magnetic Schrödinger operators in Euclidean space}. In the Appendix \ref{appendix} some basic facts about forms and semigroups on Hilbert spaces are collected.

\textbf{Acknowledgment: }This article is based on the author's Master's thesis at Friedrich Schiller University Jena. He wants to express his gratitude to Professor Dr. Daniel Lenz and Marcel Schmidt for supervising the work on this thesis. Especially, he would like to thank Marcel for pointing him from questions of essential self-adjointness to form uniqueness in proposing a first version of the main result. Finally, the financial aid of the German National Academic Foundation (Studienstiftung des deutschen Volkes) is gratefully acknowledged.

\section{Positivity in Hilbert spaces}\label{chap_positivity}
\subsection{Positive cones}
In this section we collect some basics about order structures in Hilbert spaces induced by positive cone. All the material here is certainly well-known---see \cite{Nem03} for a (very short) introduction. To make this work self-contained, we included proofs of the elementary facts.

As it is convenient for order theory, we only deal with vector spaces over $\IR$ in this section.

\begin{definition}[Positive Cone]
Let $\K$ be a Hilbert space. A closed, non-empty subset $\K^+$ of $\K$ is called positive cone if
\begin{itemize}
\item[(P1)]$\K^+ +\K^+\subset \K^+$,
\item[(P2)]$ \alpha\K^+\subset \K^+$ for all $ \alpha\geq 0$,
\item[(P3)]$\langle\K^+,\K^+\rangle\geq 0$.
\end{itemize}
We will write $g_1\leq g_2$ if $g_2-g_1\in\K^+$.\\
The positive cone $\K^+$ is said to be self-dual if 
\begin{align*}
\K^+=\{g\in \K\mid \langle g,h\rangle\geq 0\text{ for all }h\in\K^+\}.
\end{align*}
The positive cone $\K^+$ is called an isotone projection cone if the projection $P_{\K^+}$ onto $\K^+$ is monotone increasing with respect to $\leq$, that is, $g\leq h$ implies $P_{\K^+}(g)\leq P_{\K^+}(h)$ for $g,h\in\K$.
\end{definition}

\begin{remark}
\begin{itemize}
\item If $\K^+\subset\K$ is a positive cone, the dual cone is defined as
\begin{align*}
(\K^+)^\circ=\{g\in\K\mid\langle g,h\rangle\geq 0\text{ for all }h\in \K^+\}.
\end{align*}
Hence a cone is self-dual if and only if it coincides with its dual. Sometimes the dual cone is also called polar cone (and self-dual cones are called self-polar), but the usual convention seems to be that the polar cone of $\K^+$ is $-(\K^+)^\circ$.
\item For self-dual cones it is redundant to assume that they are closed. Indeed,
\begin{align*}
\K^+=\bigcap_{h\in\K^+}\{g\in\H\mid \langle g,h\rangle\geq 0\}
\end{align*}
is closed as the intersection of inverse images of a closed set under continuous functions.\\
In general this is no longer true, as the following example shows: Let $\K=\ell^2$ and $\K^+=\{f\in\ell^2\mid f\geq 0,\,\supp f\text{ finite}\}$. Then $\K^+$ satisfies (P1) -- (P3), but is obviously not closed.
\item Some authors do not assume property (P3) in the definition of positive cones. However, in the case of self-dual cones we are mainly interested in it is automatically satisfied. Our terminology is in accordance with that of \cite{Ber} and \cite{HSU}, our main sources for Section \ref{sec_Domination_of_operators}.
\item The projection $P_C$ onto a closed, convex subset $C$ of a Hilbert space $H$ maps $x\in H$ to the unique element $P_C(x)\in C$ that satisfies $\norm{P_C(x)-x}=d(x,C)$. It is characterized as the unique $z\in C$ that satisfies $\langle x-z,y-z\rangle\leq 0$ for all $y\in C$.
\end{itemize}
\end{remark}

In most cases we will be interested in the following example.
\begin{example}
Let $(X,\B,m)$ be a measure space. Then
\begin{align*}
L^2(X,m)^+=\{f\in L^2(X,m)\mid f\geq 0\;m\text{-almost everywhere}\}
\end{align*}
is a self-dual isotone projection cone in $L^2(X,m)$.
\end{example}

\begin{remark}
It can be shown (cf. \cite{Pen76}, Corollary II.4) that all self-dual isotone projection cones arise in this way. Actually, Penne'y result is even a little stronger: If $\K^+\subset \K$ is a self-dual cone that induces a lattice ordering on $\K$, there is a compact space $X$, a regular finite Borel measure $\mu$ on $X$, and a unitary $U\colon \K\lra L^2(X,m)$ such that $U\K^+=L^2(X,m)^+$. That self-dual isotone projection cones indeed satisfy the assumption of this statement is content of Proposition \ref{isotone_latticial}.
\end{remark}

\begin{lemma}
Let $\K$ be a Hilbert space and $\K^+\subset\K$ a positive cone. Then the relation $\leq$ is a partial order on $\K$ satisfying
\begin{itemize}
\item[(a)]$f\leq g$ implies $f+h\leq g+h$,
\item[(b)]$f\leq g$ implies $ \alpha f\leq  \alpha g$
\end{itemize}
for all $f,g,h\in\K,\, \alpha\geq 0$.
\end{lemma}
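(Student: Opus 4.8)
The plan is to verify directly from the cone axioms (P1)--(P3) that $\leq$ is reflexive, antisymmetric and transitive, and then to check the compatibility properties (a) and (b), which should follow almost immediately from the definition together with (P1) and (P2). Throughout I would simply unfold the definition $g_1\leq g_2 \Leftrightarrow g_2-g_1\in\K^+$ and translate each order-theoretic claim into a membership statement about $\K^+$.

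First I would establish reflexivity. Since $f\leq f$ amounts to $0=f-f\in\K^+$, it suffices to show $0\in\K^+$; as $\K^+$ is non-empty I may pick some $x\in\K^+$, and then (P2) with $\alpha=0$ yields $0=0\cdot x\in\K^+$. Transitivity is equally short: if $f\leq g$ and $g\leq h$, then $g-f\in\K^+$ and $h-g\in\K^+$, so (P1) gives $h-f=(h-g)+(g-f)\in\K^+$, that is $f\leq h$.

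The one step requiring an actual idea is antisymmetry, and this is where property (P3), unused so far, comes into play; I expect this to be the main (and only genuine) obstacle, though a mild one. Suppose $f\leq g$ and $g\leq f$ and set $x=g-f$; then both $x\in\K^+$ and $-x=f-g\in\K^+$. Applying (P3) to the pair $(x,-x)$ gives $\langle x,-x\rangle\geq 0$, i.e. $-\norm{x}^2\geq 0$, which forces $x=0$ and hence $f=g$.

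Finally, properties (a) and (b) are direct translations of the definition and of (P2). For (a), if $f\leq g$ then $(g+h)-(f+h)=g-f\in\K^+$, so $f+h\leq g+h$, with no axiom needed beyond the hypothesis itself. For (b), given $\alpha\geq 0$ we have $\alpha g-\alpha f=\alpha(g-f)\in\K^+$ by (P2), whence $\alpha f\leq \alpha g$. This completes the verification.
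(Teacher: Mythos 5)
Your proof is correct and follows essentially the same route as the paper: reflexivity, transitivity, (a) and (b) fall out of (P1) and (P2), while antisymmetry is exactly the paper's argument of applying (P3) to $g-f$ and $-(g-f)$ to get $-\norm{g-f}^2\geq 0$. The only difference is that you spell out the easy steps (such as $0\in\K^+$ via $\alpha=0$ in (P2)) which the paper dismisses as immediate.
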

\begin{proof}
Reflexivity, transitivity and the properties (a) and (b) of $\leq$ are immediate from (P1), (P2). For anti-symmetry let $g,h\in\K$ such that $g\leq h$ and $h\leq g$. Then (P3) implies
\begin{align*}
\norm{g-h}^2=-\langle \underbrace{g-h}_{\in\K^+},\underbrace{-(g-h)}_{\in\K^+}\rangle\leq 0,
\end{align*}
hence $g=h$.
\end{proof}

An important tool when dealing with positive cones is Moreau's Theorem (cf. \cite{M62}). It shows that in Hilbert spaces with a positive cone there is an abstract analog of the decomposition of a function into positive and negative part. In the original form it deals with two mutually polar cones, but we will need only the version for a self-dual cone as stated below.
\begin{theorem}\label{Moreau}
Let $\K$ be a Hilbert space and $\K^+\subset\K$ a self-dual positive cone. For $g,h_1,h_2\in\K$ the following statements are equivalent:
\begin{itemize}
\item[(i)]$g=h_1-h_2,\;h_1,h_2\in\K^+,\;\langle h_1,h_2\rangle=0$
\item[(ii)]$h_1=P_{\K^+}(g), h_2=P_{\K^+}(-g)$.
\end{itemize}
\end{theorem}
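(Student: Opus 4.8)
The plan is to prove the two implications separately, leaning in both directions on the variational characterization of the metric projection recalled in the preceding remark: for a closed convex set $C$, one has $z=P_C(x)$ if and only if $z\in C$ and $\langle x-z,y-z\rangle\leq 0$ for all $y\in C$. Throughout, the self-duality of $\K^+$ --- equivalently $(\K^+)^\circ=\K^+$ --- will be the feature that converts an inequality holding against all cone elements into membership in the cone.

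For (i) $\Rightarrow$ (ii), suppose $g=h_1-h_2$ with $h_1,h_2\in\K^+$ and $\langle h_1,h_2\rangle=0$. I would verify directly that $h_1$ meets the characterization of $P_{\K^+}(g)$. Since $h_1\in\K^+$, it remains to check $\langle g-h_1,y-h_1\rangle\leq 0$ for every $y\in\K^+$. Because $g-h_1=-h_2$, this amounts to $\langle h_2,y\rangle-\langle h_2,h_1\rangle\geq 0$; the second term vanishes by the assumed orthogonality, while $\langle h_2,y\rangle\geq 0$ holds by (P3). Hence $h_1=P_{\K^+}(g)$, and applying the identical argument to the decomposition $-g=h_2-h_1$ yields $h_2=P_{\K^+}(-g)$.

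For (ii) $\Rightarrow$ (i), set $h_1=P_{\K^+}(g)$ and $h_2=P_{\K^+}(-g)$, so $h_1,h_2\in\K^+$ automatically. The variational inequality $\langle g-h_1,y-h_1\rangle\leq 0$ for all $y\in\K^+$ encodes two facts when tested at the right points. Choosing $y=0$ and $y=2h_1$ (both in $\K^+$ by (P2)) and combining the two resulting inequalities forces $\langle g-h_1,h_1\rangle=0$. Feeding this back, the inequality collapses to $\langle h_1-g,y\rangle\geq 0$ for all $y\in\K^+$, which by self-duality means $h_1-g\in\K^+$. Writing $\tilde h_2:=h_1-g$, the triple $(g,h_1,\tilde h_2)$ then satisfies condition (i): indeed $g=h_1-\tilde h_2$, both $h_1,\tilde h_2\in\K^+$, and $\langle h_1,\tilde h_2\rangle=\norm{h_1}^2-\langle h_1,g\rangle=0$ again by the orthogonality just derived. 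Invoking the already-established implication (i) $\Rightarrow$ (ii) for this triple identifies $\tilde h_2=P_{\K^+}(-g)=h_2$, so $g=h_1-h_2$ with the desired properties.

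I expect the only genuine subtlety to lie in the second implication: the passage from the single variational inequality to both the orthogonality relation and the membership $h_1-g\in\K^+$. The former is a standard two-sided test-point argument, whereas the latter is precisely the step where self-duality of the cone is indispensable --- without it one could only conclude $h_1-g\in(\K^+)^\circ$. Once these are secured, closing the loop through the forward implication is purely formal.
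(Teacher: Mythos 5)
Your proposal is correct and follows essentially the same path as the paper: the same direct verification of the variational characterization for (i)$\implies$(ii), the same test points $y=0$ and $y=2h_1$ to extract the orthogonality $\langle g-h_1,h_1\rangle=0$, and the same appeal to self-duality to conclude $h_1-g\in\K^+$. The only (minor) difference is the closing step of (ii)$\implies$(i): the paper verifies the variational characterization of $P_{\K^+}(-g)$ directly for $v=h_1-g$, whereas you bootstrap through the already-established implication (i)$\implies$(ii) applied to the triple $(g,h_1,h_1-g)$ --- both are valid and of comparable length.
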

\begin{proof}
(i)$\implies$(ii): For all $u\in\K^+$ we have
\begin{align*}
\langle g-h_1, u-h_1\rangle=\langle -h_2,u-h_1\rangle=-\langle h_2, u\rangle\leq 0
\end{align*}
Thus, $h_1=P_{\K^+}(g)$ by the characterization of the projection onto a closed, convex set. Analogously, $h_2=P_{\K^+}(-g)$ follows.

(ii)$\implies$(i): Since $h_1=P_{\K^+}(g)$, we have $\langle g-h_1,u-h_1 \rangle\leq 0$ for all $u\in\K^+$. In particular, $\langle g-h_1,h_1\rangle\geq 0$ for $u=0$ and $\langle g-h_1, h_1\rangle\leq 0$ for $u=2 h_1$, hence $\langle g-h_1,h_1\rangle=0$.\\
Let $v=h_1-g$. For all $u\in\K^+$ we have
\begin{align*}
\langle u,v\rangle=\langle u,h_1-g\rangle=\langle v-h_1,h_1-g\rangle\geq 0.
\end{align*}
Thus, $v\in\K^+$. On the other hand we have for all $u\in\K^+$:
\begin{align*}
\langle v-(-g), u-v\rangle=\langle h_1,u-v\rangle=\langle h_1,u\rangle\geq 0.
\end{align*}
Hence, $v=P_{\K^+}(-g)=h_2$.
\end{proof}

\begin{definition}[Riesz space]
A vector space $E$ with a partial order $\leq$ is called ordered vector space if for all $f,g,h\in E$, $ \alpha\geq 0$, the following properties hold:
\begin{itemize}
\item[(R1)]$f\leq g$ implies $f+h\leq g+h$,
\item[(R2)]$f\leq g$ implies $ \alpha f\leq  \alpha g$.
\end{itemize}
If additionally
\begin{itemize}
\item[(R3)]$\{f,g\}$ has a least upper bound $f\vee g$
\end{itemize}
holds for all $f,g\in E$, then $E$ is called a Riesz space.\\
A subspace $F$ of a Riesz space $E$ is called sublattice if $f,g\in F$ implies $f\vee g\in F$.
\end{definition}

\begin{remark}
\begin{itemize}
\item Let $E$ be an ordered vector space. If a least upper bound (or a greatest lower bound) of $\{f,g\}\subset E$ exists, it is necessarily unique.
\item The greatest lower bound of $\{f,g\}$ will be denoted by $f\wedge g$ (if it exists).
\end{itemize}
\end{remark}

\begin{definition}[Positive and negative part]
Let $E$ be an ordered vector space. For $f\in E$ the positive and negative part are defined as $f^{\pm}=(\pm f)\vee 0$ if they exist. In this case the absolute value is defined as $\abs{f}=f^++f^-$.
\end{definition}

We collect some basic properties about suprema and infima in ordered vector spaces. They can be found for example in \cite{LZ}, Thms. 11.5, 11.7, 11.8.
\begin{lemma}\label{basic_Riesz}
Let $E$ be an ordered vector space. For $f,g,h\in E$ and $ \alpha\geq 0$ the following properties hold:
\begin{itemize}
\item[(a)] If $f\vee g$ exists, then $(f+h)\vee (g+h)$ exists and $(f+h)\vee (g+h)=(f\vee g)+h$.
\item[(b)] If $f\vee g$ exists, then $( \alpha f)\vee ( \alpha g)$ exists and $( \alpha f)\vee ( \alpha g)= \alpha (f\vee g)$.
\item[(c)] If $f\vee g$ exists, then $(-f)\wedge (-g)$ exists and  $(-f)\wedge (-g)=-(f\vee g)$.
\item[(d)] If $(f-g)^{+}$ and $(f-g)^-$ exist, then $f\vee g$ and $f\wedge g$ exist, and $f\vee g=\frac 1 2(f+g+\abs{f-g})$, $f\wedge g=\frac 1 2(f+g-\abs{f-g})$.
\item[(e)] If $f\vee g$, $f\wedge g$ exist, then $f\vee g+f\wedge g=f+g$ and $f\vee g-f\wedge g=\abs{f-g}$.
\end{itemize}
Additionally, one can replace the suprema in (a) and (b) by infima.
\end{lemma}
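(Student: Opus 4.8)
The plan is to reduce every assertion to the defining universal property of the least upper bound (and greatest lower bound), using only the order axioms (R1) and (R2). Two elementary facts, both immediate from (R1), will be used repeatedly: translation is an order isomorphism, i.e.\ $a\leq b$ if and only if $a+h\leq b+h$; and negation reverses the order, i.e.\ $a\leq b$ if and only if $-b\leq -a$ (add $-a-b$ to both sides). From (R2) I also record that for $\alpha>0$ both $a\mapsto\alpha a$ and $a\mapsto\alpha^{-1}a$ preserve $\leq$, so multiplication by a positive scalar is an order isomorphism as well.

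For (a), the element $(f\vee g)+h$ is an upper bound of $\{f+h,g+h\}$ by (R1); and if $u$ is any upper bound, then $u-h$ bounds $\{f,g\}$, so $f\vee g\leq u-h$ by minimality, whence $(f\vee g)+h\leq u$. Thus $(f\vee g)+h$ is the least upper bound. Part (b) is the identical argument with translation replaced by multiplication by $\alpha$ and $\alpha^{-1}$; the case $\alpha=0$ is trivial since $0\vee 0=0$. The infimum versions of (a) and (b) are proved verbatim with all inequalities reversed, so they are available for use below without any circularity. For (c), order reversal shows that $-(f\vee g)$ is a lower bound of $\{-f,-g\}$; and if $w$ is any lower bound, then $-w$ is an upper bound of $\{f,g\}$, so $f\vee g\leq -w$ and hence $w\leq-(f\vee g)$, which makes $-(f\vee g)$ the greatest lower bound.

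I would prove (e) before (d). Writing $s=f\vee g$ and $i=f\wedge g$, I first check that $f+g-s$ is a lower bound of $\{f,g\}$ and $f+g-i$ an upper bound (each by a single application of (R1)); minimality and maximality then give $f+g-s\leq i$ and $s\leq f+g-i$, i.e.\ $s+i=f+g$. For the second identity I use (a) to write $f\vee g-g=(f-g)\vee 0=(f-g)^+$ and, combining the infimum version of (a) with (c), $f\wedge g-g=(f-g)\wedge 0=-(f-g)^-$; subtracting gives $f\vee g-f\wedge g=(f-g)^++(f-g)^-=\abs{f-g}$. (Note that existence of $f\vee g$ and $f\wedge g$ already forces $(f-g)^{\pm}$ to exist via (a), so $\abs{f-g}$ is well defined.)

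Finally, for (d) the hypothesis yields $f\vee g=(f-g)^++g$ by (a), and $f\wedge g=g-(f-g)^-$ by (c) together with the infimum version of (a); in particular both the suprema and the infima exist. The half-sum formulas then reduce to the identity $x^+-x^-=x$ for $x=f-g$: since $x^+=x\vee 0$ exists and, by (c), $x\wedge 0=-x^-$ exists, applying the first identity of (e) to the pair $x,0$ gives $x^++(x\wedge 0)=x$, i.e.\ $x^+-x^-=x$; substituting this into the expressions for $f\vee g$ and $f\wedge g$ produces the stated formulas. None of the steps is genuinely difficult; the only point demanding care is the bookkeeping of existence — securing $f\vee g$, $f\wedge g$ and $\abs{f-g}$ from the stated hypotheses before manipulating them, and ordering the parts so that (d) may legitimately invoke (e).
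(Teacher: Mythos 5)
Your proposal is correct, and it is complete in a way the paper's own proof is not: the paper declares all parts "elementary and quite similar," cites Luxemburg--Zaanen for the full statements, and writes out only the first formula of (d). For that one part the two arguments genuinely differ. The paper verifies directly from the universal property of positive parts that $\tfrac12(f+g+\abs{f-g})$ is the least upper bound of $\{f,g\}$: it checks it is an upper bound using $(f-g)^+\geq f-g$ and $(g-f)^+\geq 0$, and then shows any upper bound $h$ satisfies $h\geq f+(g-f)^+$ and $h\geq g+(f-g)^+$, whose average gives the claim. You instead build a modular chain: (a)--(c) from the universal property, then (e) before (d), and finally (d) as an algebraic corollary via $f\vee g=(f-g)^++g$, $f\wedge g=g-(f-g)^-$, and the decomposition $x=x^+-x^-$ extracted from (e). What each buys: the paper's computation is self-contained, so (d) stands alone with no dependence on the other parts; your route avoids any repeated upper-bound verification, handles the existence bookkeeping explicitly (which the paper leaves implicit), and yields all five parts plus the infimum versions from a small number of reusable order isomorphisms (translation, positive scaling, negation). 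One small point of care: in (e) and (d) you invoke the direction ``if $a\wedge b$ exists then $(-a)\vee(-b)$ exists,'' which is the dual of (c) as stated rather than (c) itself; this is covered by your opening remark that negation reverses the order, but it is worth flagging that it is the mirror argument, not a literal citation of (c).
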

\begin{proof}
The proofs are all elementary and quite similar. We just show the first part of (d) as an example.\\
Let $f,g\in E$ such that $(f-g)^+,(f-g)^-=(g-f)^+$ exist. Then $(f-g)^+\geq f-g,\,(g-f)^+\geq 0$ imply
\begin{align*}
\frac 1 2(f+g+\abs{f-g})=\frac 1 2(f+g+(f-g)^++(g-f)^+)\geq \frac 1 2(f+g+f-g)=f
\end{align*}
and analogously $\frac 1 2(f+g+\abs{f-g})\geq g$.

Now let $h$ be an upper bound of $\{f,g\}$. Then $h-f\geq g-f$ and $h-f\geq 0$, hence $h-f\geq (g-f)^+$, that is, $h\geq f +(g-f)^+$. Changing the roles of $f$ and $g$ we obtain $h\geq g+(f-g)^+$. Combining both inequalities gives
\begin{align*}
h\geq \frac 1 2(f+(g-f)^+)+\frac 1 2(g+(f-g)^+)=\frac 1 2(f+g+\abs{f-g}).
\end{align*}
Therefore, $\frac 1 2(f+g+\abs{f-g})$ is a least upper bound of $\{f,g\}$.
\end{proof}

In particular, in Riesz spaces all infima $f\wedge g$ exist.

As a corollary of (d) we notice that it suffices to show the existence of suprema for sets of the form $\{f,0\}$ for all $f\in E$ instead of all subsets with two elements in order to prove that $E$ is a Riesz space.
\begin{corollary}\label{positive_part_implies_Riesz}
Let $E$ be an ordered vector space such that $f^+$ exists for all $f\in E$. Then $E$ is a Riesz space.
\end{corollary}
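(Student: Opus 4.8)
The plan is to reduce everything to part (d) of Lemma \ref{basic_Riesz}, which already manufactures binary suprema out of the mere existence of positive and negative parts. Concretely, to verify axiom (R3) I must show that $f\vee g$ exists for arbitrary $f,g\in E$, and Lemma \ref{basic_Riesz}(d) tells me it is enough to produce $(f-g)^+$ and $(f-g)^-$.

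First I would observe that the hypothesis is self-symmetrizing: by definition $f^-=(-f)\vee 0=(-f)^+$, and $-f$ again lies in $E$, so the assumption that $f^+$ exists for every $f\in E$ automatically guarantees that $f^-$ exists for every $f\in E$ as well. Hence both the positive and the negative part of every element of $E$ are available, not just the positive part.

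Next, fixing $f,g\in E$, I would apply this to the element $f-g\in E$. Its positive part $(f-g)^+$ exists directly by hypothesis, while its negative part satisfies $(f-g)^-=(g-f)^+$ and therefore exists by the same hypothesis applied to $g-f\in E$. With both $(f-g)^+$ and $(f-g)^-$ in hand, part (d) of Lemma \ref{basic_Riesz} immediately yields the existence of $f\vee g$, together with the explicit formula $f\vee g=\frac12(f+g+\abs{f-g})$. Since $f,g$ were arbitrary, (R3) holds and $E$ is a Riesz space.

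I do not expect any genuine obstacle here; the proof is essentially a one-line invocation of Lemma \ref{basic_Riesz}(d). The only point demanding a little care is the bookkeeping identity $(f-g)^-=(g-f)^+$, which is what allows the single stated hypothesis on positive parts to supply both inputs required by the lemma.
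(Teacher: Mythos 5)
Your proof is correct and follows exactly the route the paper intends: the paper gives no separate proof, noting only that the corollary follows from Lemma \ref{basic_Riesz}(d), which is precisely your argument of applying the hypothesis to $f-g$ and $g-f$ (using $(f-g)^-=(g-f)^+$) to obtain $f\vee g$. Nothing is missing.
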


The connection between Riesz spaces and positive cones in Hilbert spaces is given by the following result by Isac and Németh (\cite{IN}, Proposition 3).
\begin{proposition}\label{isotone_latticial}
Let $\K$ be a Hilbert space and $\K^+\subset\K$ a self-dual isotone projection cone. Then $(\K,\leq)$ is a Riesz space.
\end{proposition}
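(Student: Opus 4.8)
The plan is to invoke Corollary \ref{positive_part_implies_Riesz}, which reduces the problem to showing that the positive part $f^+ = f\vee 0$ exists for every $f\in\K$. The natural candidate for $f^+$ is the projection $P_{\K^+}(f)$ onto the cone, and I would verify directly that this element is the least upper bound of $\{f,0\}$. The two cone properties in play have complementary roles: self-duality (via Moreau's Theorem) will show that $P_{\K^+}(f)$ is \emph{an} upper bound, while the isotone projection property will show it is the \emph{least} one.

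First I would check that $P_{\K^+}(f)$ is an upper bound of $\{f,0\}$. That $P_{\K^+}(f)\geq 0$ is immediate, since $P_{\K^+}(f)\in\K^+$ by definition of the projection onto $\K^+$, so $P_{\K^+}(f)-0\in\K^+$. To see $P_{\K^+}(f)\geq f$, I would apply Theorem \ref{Moreau} with $h_1=P_{\K^+}(f)$ and $h_2=P_{\K^+}(-f)$, which furnishes the decomposition $f=h_1-h_2$ with $h_1,h_2\in\K^+$. Hence $P_{\K^+}(f)-f=P_{\K^+}(-f)\in\K^+$, that is, $P_{\K^+}(f)\geq f$.

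The decisive step—and the only place where the isotone projection hypothesis is used—is showing that $P_{\K^+}(f)$ is the least upper bound. Let $h$ be any upper bound of $\{f,0\}$. From $h\geq 0$ we get $h\in\K^+$, and since the projection fixes points of the cone, $P_{\K^+}(h)=h$. Combining $f\leq h$ with the isotonicity of $P_{\K^+}$ then gives
\begin{align*}
P_{\K^+}(f)\leq P_{\K^+}(h)=h.
\end{align*}
Thus $P_{\K^+}(f)\leq h$ for every upper bound $h$ of $\{f,0\}$, so $f^+=P_{\K^+}(f)$ exists, and Corollary \ref{positive_part_implies_Riesz} concludes that $(\K,\leq)$ is a Riesz space.

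I expect the main subtlety to lie precisely in this last step: without the isotone projection property, $P_{\K^+}(f)$ would still be an upper bound of $\{f,0\}$ by the argument above, but there would be no reason for it to be the minimal one. The clean identity $P_{\K^+}(h)=h$ for $h\in\K^+$, together with monotonicity of the projection, is exactly what forces minimality, and recognizing that every upper bound automatically lies in the cone is the key observation that makes the argument go through.
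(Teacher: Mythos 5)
Your proof is correct and follows essentially the same route as the paper: the paper proves exactly this statement as Lemma \ref{positive_part_projection} (using Moreau's Theorem \ref{Moreau} for the upper bound and isotonicity of $P_{\K^+}$ for minimality) and then invokes Corollary \ref{positive_part_implies_Riesz}. Your only addition is spelling out explicitly why $P_{\K^+}(h)=h$ for an upper bound $h$, a step the paper leaves implicit.
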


The original proof is quite complicated, but as our situation is a little more restrictive than that in \cite{IN}, we can give a simple proof in the form of the following lemma combined with Corollary \ref{positive_part_implies_Riesz}.

\begin{lemma}\label{positive_part_projection}
Let $\K$ be a Hilbert space, $\K^+\subset \K$ a self-dual isotone projection cone, and $g\in\K$. Then $P_{\K^+}(g)$ is a least upper bound of $\{0,g\}$.
\end{lemma}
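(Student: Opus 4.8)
The plan is to set $h := P_{\K^+}(g)$ and to verify directly the two defining properties of a least upper bound of $\{0,g\}$: that $h$ is an upper bound, and that it lies below every other upper bound. The two hypotheses on $\K^+$ will play complementary roles here---self-duality (through Moreau's Theorem \ref{Moreau}) delivers the upper bound property, while the isotone projection hypothesis delivers minimality.

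First I would check that $h$ is an upper bound of $\{0,g\}$. Since $h = P_{\K^+}(g) \in \K^+$, we have $h \geq 0$ by definition of the order. To obtain $h \geq g$, I would invoke Moreau's Theorem \ref{Moreau} in the equivalent implication (ii)$\implies$(i), applied to $g$: this yields the decomposition $g = P_{\K^+}(g) - P_{\K^+}(-g)$ with both summands in $\K^+$. Rearranging gives $h - g = P_{\K^+}(-g) \in \K^+$, that is, $g \leq h$. Hence $h$ dominates both $0$ and $g$.

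Next I would establish minimality. Let $u$ be an arbitrary upper bound of $\{0,g\}$. From $u \geq 0$ we get $u \in \K^+$, and so $P_{\K^+}(u) = u$ since the projection fixes points of its range. From $g \leq u$ together with the assumption that $\K^+$ is an isotone projection cone, the monotonicity of $P_{\K^+}$ gives $h = P_{\K^+}(g) \leq P_{\K^+}(u) = u$. Thus $h$ lies below every upper bound, and is therefore the least upper bound of $\{0,g\}$. Combined with Corollary \ref{positive_part_implies_Riesz}, this is exactly what is needed to conclude Proposition \ref{isotone_latticial}.

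The argument is short, and there is no real computational obstacle; the only point requiring care is to avoid circularity. Since the order $\leq$ is itself defined through membership in $\K^+$, and the very existence of suprema is what is under discussion, each step should be phrased purely in terms of $\K^+$ and the projection $P_{\K^+}$ rather than appealing to any lattice operation not yet shown to exist. The conceptual heart of the proof is simply the recognition that self-duality is precisely what guarantees the upper bound property (via Moreau), while the isotone projection property is precisely what converts monotonicity of $P_{\K^+}$ into the minimality of $P_{\K^+}(g)$.
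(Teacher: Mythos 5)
Your proof is correct and follows essentially the same route as the paper: Moreau's Theorem (using self-duality) gives $g = P_{\K^+}(g)-P_{\K^+}(-g) \leq P_{\K^+}(g)$ for the upper bound property, and isotonicity of $P_{\K^+}$ together with $P_{\K^+}(u)=u$ for $u\in\K^+$ gives minimality. The only difference is that you spell out the two small steps the paper leaves implicit ($P_{\K^+}(g)\geq 0$ and the fixed-point property of the projection on $\K^+$), which is fine.
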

\begin{proof}
Let $g\in\K$. By Moreau's Theorem \ref{Moreau} we have
\begin{align*}
g=P_{\K^+}(g)-P_{\K^+}(-g)\leq P_{\K^+}(g).
\end{align*}
Hence, $P_{K^+}(g)$ is an upper bound for $\{0,g\}$. Now let $h$ be an upper bound for $\{0,g\}$. By isotonicity we have $P_{\K^+}(g)\leq P_{\K^+}(h)=h$. Thus, $P_{\K^+}(g)$ is the least upper bound of $\{0,g\}$.
\end{proof}

\begin{lemma}\label{norm_Riesz_monotone}
Let $\K$ be a Hilbert space and $\K^+\subset \K$ a self-dual isotone projection cone. Then 
\begin{align*}
\norm\cdot\colon \K^+\lra[0,\infty)
\end{align*}
is monotone increasing and $\norm{g}=\norm{\abs{g}}$ for all $g\in\K$.
\end{lemma}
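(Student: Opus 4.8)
The plan is to verify the two assertions separately, in each case reducing the claim to a one-line computation with the inner product. For the monotonicity of the norm I would take $f,g\in\K^+$ with $f\leq g$, so that $g-f\in\K^+$, and expand $\norm{g}^2=\norm{f+(g-f)}^2$. Since both $f$ and $g-f$ lie in $\K^+$, property (P3) yields $\langle f,g-f\rangle\geq 0$; combined with $\norm{g-f}^2\geq 0$ this gives
\begin{align*}
\norm{g}^2=\norm{f}^2+2\langle f,g-f\rangle+\norm{g-f}^2\geq\norm{f}^2,
\end{align*}
and hence $\norm{f}\leq\norm{g}$. This settles the first claim; note that neither self-duality nor isotonicity enters here, only the positivity (P3) of the inner product on the cone.

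For the identity $\norm{g}=\norm{\abs{g}}$ I would first translate the abstract positive and negative parts into projections. By Lemma \ref{positive_part_projection}, applied to $g$ and to $-g$ respectively, the least upper bounds $g^+=g\vee 0$ and $g^-=(-g)\vee 0$ exist and equal $P_{\K^+}(g)$ and $P_{\K^+}(-g)$. Moreau's Theorem \ref{Moreau} then supplies the orthogonal decomposition $g=g^+-g^-$ with $g^+,g^-\in\K^+$ and, crucially, $\langle g^+,g^-\rangle=0$. Since by definition $\abs{g}=g^++g^-$, the cross term drops out of both squared norms and I obtain
\begin{align*}
\norm{g}^2=\norm{g^+}^2+\norm{g^-}^2=\norm{\abs{g}}^2,
\end{align*}
from which the claim follows by taking square roots.

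Both arguments are short, so I do not expect a genuine obstacle once the preliminary structure is assembled; the only point requiring care is the identification of the abstractly defined $\abs{g}=g^++g^-$ with the Moreau decomposition, so that the orthogonality $\langle g^+,g^-\rangle=0$ is available to kill the cross term. That identification is precisely what Lemma \ref{positive_part_projection} provides (together with Theorem \ref{Moreau}), and after it is in place each statement reduces to expanding a single square.
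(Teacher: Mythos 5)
Your proof is correct and follows essentially the same route as the paper: the identity $\norm{g}=\norm{\abs{g}}$ is obtained exactly as in the paper, by identifying $g^{\pm}$ with $P_{\K^+}(\pm g)$ via Lemma \ref{positive_part_projection} and using the orthogonality $\langle g^+,g^-\rangle=0$ from Moreau's Theorem \ref{Moreau} to equate the two squared norms. For monotonicity the difference is purely cosmetic: you expand $\norm{f+(g-f)}^2$ and drop the nonnegative terms, while the paper applies (P3) to $\langle h-g,g\rangle$ and then Cauchy--Schwarz; both hinge on the same use of (P3), and your observation that self-duality and isotonicity are not needed for this half is accurate.
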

\begin{proof}
Let $g,h\in \K^+$ such that $g\leq h$. Then we have
\begin{align*}
0\leq \langle h-g,g\rangle=\langle h,g\rangle-\norm g^2\leq \norm h\norm g-\norm g^2,
\end{align*}
hence $\norm g\leq\norm h$.\\
Moreover,
\begin{align*}
\norm{g}^2&=\langle g^+-g^-,g^+-g^-\rangle=\langle g^++g^-,g^++g^-\rangle=\norm{\abs{g}}^2
\end{align*}
since $\langle g^+,g^-\rangle=0$ by Moreau's Theorem \ref{Moreau}.
\end{proof}

Having studied some of the basic properties, we will briefly turn to forms on ordered Hilbert spaces that are compatible with the order structure.
\begin{definition}[Positive form]
Let $\K$ be a Hilbert space and $\K^+\subset\K$ a positive cone. A form $(\b,D(\b))$ in $\K$ is called positive if $P_{\K^+}D(\b)\subset D(\b)$ and
\begin{align*}
\b(P_{\K^+}(g),P_{\K^+}(-g))\leq 0
\end{align*}
for all $g\in D(\b)$.
\end{definition}

By Ouhabaz' invariance criterion (Proposition \ref{invariance_ouhabaz}), a form is positive if and only if the associated semigroup preserves the positive cone $\K^+$, explaining the term ``positive form''. This notion should not be confused with that of forms with lower bound $0$, which are also sometimes called ``positive''.

\begin{lemma}\label{positive_form_Riesz}
Let $\K$ be a Hilbert space, $\K^+\subset \K$ a self-dual isotone projection cone, and $(\b,D(\b))$ a form in $\K$. Then $\b$ is positive if and only if $\abs{D(\b)}\subset D(\b)$ and $\b(\abs{g})\leq \b(g)$ for all $g\in D(\b)$.
\end{lemma}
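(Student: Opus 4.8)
The plan is to reduce everything to the Moreau decomposition of an element $g\in D(\b)$. Writing $g^+=P_{\K^+}(g)$ and $g^-=P_{\K^+}(-g)$, Moreau's Theorem \ref{Moreau} gives $g=g^+-g^-$ with $\langle g^+,g^-\rangle=0$, and by Lemma \ref{positive_part_projection} these coincide with the positive and negative parts in the Riesz space $(\K,\leq)$ supplied by Proposition \ref{isotone_latticial}, so that $\abs g=g^++g^-$. The two identities I will use throughout are $g^+=\tfrac12(g+\abs g)$ and $g^-=\tfrac12(\abs g-g)$.

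First I would settle the equivalence of the two domain conditions. Since $D(\b)$ is a linear subspace, if $P_{\K^+}D(\b)\subset D(\b)$, then for $g\in D(\b)$ both $g^+=P_{\K^+}(g)$ and $g^-=P_{\K^+}(-g)$ lie in $D(\b)$, hence so does $\abs g=g^++g^-$; thus $\abs{D(\b)}\subset D(\b)$. Conversely, if $\abs g\in D(\b)$ whenever $g\in D(\b)$, then $g^+=\tfrac12(g+\abs g)\in D(\b)$, i.e. $P_{\K^+}D(\b)\subset D(\b)$. Under either hypothesis all of $g^+,g^-,\abs g$ belong to $D(\b)$, so the form expressions below are well defined.

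The heart of the proof is a short computation with the symmetric bilinear form. Expanding $\b(g)=\b(g^+-g^-)$ and $\b(\abs g)=\b(g^++g^-)$ and using the symmetry of $\b$ yields $\b(g)-\b(\abs g)=-4\,\b(g^+,g^-)$. Hence $\b(\abs g)\leq\b(g)$ for all $g\in D(\b)$ if and only if $\b(g^+,g^-)\leq 0$ for all $g\in D(\b)$, which is exactly the condition $\b(P_{\K^+}(g),P_{\K^+}(-g))\leq 0$ defining positivity. Together with the equivalence of the domain conditions, this gives the claim.

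The only point requiring care is the identification of the order-theoretic absolute value $\abs g$ with the sum $g^++g^-$ of the two Moreau projections, which is precisely where Lemma \ref{positive_part_projection} and Proposition \ref{isotone_latticial} enter; everything else is routine bilinear algebra, and I expect no genuine obstacle beyond keeping track of this identification and of the linearity of $D(\b)$.
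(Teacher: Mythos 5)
Your proof is correct and takes essentially the same route as the paper: both rest on the Moreau/Riesz identification $\abs{g}=P_{\K^+}(g)+P_{\K^+}(-g)$ (via Moreau's Theorem and Lemma \ref{positive_part_projection}), the identity $g^+=\tfrac{1}{2}(g+\abs{g})$ to pass between the two domain conditions, and the bilinear expansion relating $\b(\abs{g})-\b(g)$ to $4\,\b(g^+,g^-)$. The only difference is organizational: the paper carries out this expansion separately in each direction (with a sign flip for one implication and polarization for the other), whereas you state the single identity $\b(g)-\b(\abs{g})=-4\,\b(g^+,g^-)$ once and read off both implications from it.
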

\begin{proof}
Let $\b$ be a positive form and $g\in D(\b)$. Then $\abs{g}=g^++(-g)^+\in D(\b)$ and
\begin{align*}
\b(\abs{g})&=\b(g^++(-g)^+)\\
&=\b(g^+)+\b((-g)^+)+2\b(g^+,(-g)^+)\\
&\leq \b(g^+)+\b((-g)^+)-2\b(g^+,(-g)^+)\\
&=\b(g^+-(-g)^+)\\
&=\b(g)
\end{align*}
since $\b(g^+,(-g)^+)\leq 0$.\\
Conversely assume that $\abs{D(\b)}\subset D(\b)$ and $\b(\abs{g})\leq \b(g)$ for all $g\in D(\b)$. Let $g\in D(\b)$. Then $g^+=\frac 1 2(g+\abs{g})\in D(\b)$ and
\begin{align*}
\b(g^+,(-g)^+)&=\frac 1 4(\b(g^++(-g)^+)-\b(g^+-(-g)^+))\\
&=\frac 1 4(\b(\abs{g}-\b(g))\\
&\leq 0.\qedhere
\end{align*}
\end{proof}

\begin{lemma}\label{pos_form_lattice}
Let $\K$ be a Hilbert space, $\K^+\subset\K$ a self-dual isotone projection cone, and $(\b,D(\b))$ a positive form in $\K$ with lower bound $- \lambda\in\IR$. 
\begin{itemize}
\item[(a)]The form domain $D(\b)$ is a sublattice of $\K$.
\item[(b)]For any $\alpha\geq \lambda$, the form $\b_ \alpha:=\b+ \alpha\langle\cdot,\cdot\rangle$ satisfies
\begin{align*}
\b_ \alpha(g\wedge h),\b_ \alpha(g\vee h)\leq \b_ \alpha(g)+\b_ \alpha(h)
\end{align*}
for all $g,h\in D(\b)$.
\end{itemize}
\end{lemma}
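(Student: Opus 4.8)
The plan is to treat both parts through the lattice formulas $g\vee h=\tfrac12(g+h+\abs{g-h})$ and $g\wedge h=\tfrac12(g+h-\abs{g-h})$ from Lemma \ref{basic_Riesz}(d), combined with the characterization of positivity in Lemma \ref{positive_form_Riesz}.

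For (a), I would first invoke Lemma \ref{positive_form_Riesz}: since $\b$ is positive, $\abs{D(\b)}\subset D(\b)$. For $g,h\in D(\b)$ the difference $g-h$ lies in the subspace $D(\b)$, hence $\abs{g-h}\in D(\b)$, and therefore both $g\vee h$ and $g\wedge h$, being linear combinations of $g$, $h$ and $\abs{g-h}$, lie in $D(\b)$. This is precisely the sublattice property.

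For (b), the first step is to check that for $\alpha\geq\lambda$ the form $\b_\alpha$ is both positive and nonnegative. Positivity holds because
\[
\b_\alpha(g^+,(-g)^+)=\b(g^+,(-g)^+)+\alpha\langle g^+,(-g)^+\rangle\leq 0,
\]
where the first term is $\leq 0$ by positivity of $\b$ and the inner product vanishes by Moreau's Theorem \ref{Moreau}; nonnegativity holds because $\b_\alpha(f)=\b(f)+\alpha\norm{f}^2\geq(\alpha-\lambda)\norm f^2\geq 0$. Thus Lemma \ref{positive_form_Riesz} applies to $\b_\alpha$ and yields $\b_\alpha(\abs f)\leq\b_\alpha(f)$ for every $f\in D(\b)$.

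The second step exploits the parallelogram identity for the symmetric form $\b_\alpha$. With $s=\tfrac12(g+h)$ and $t=\tfrac12\abs{g-h}$ we have $g\vee h=s+t$ and $g\wedge h=s-t$, so
\[
\b_\alpha(g\vee h)+\b_\alpha(g\wedge h)=2\b_\alpha(s)+2\b_\alpha(t)=\tfrac12\b_\alpha(g+h)+\tfrac12\b_\alpha(\abs{g-h}).
\]
Bounding $\b_\alpha(\abs{g-h})\leq\b_\alpha(g-h)$ by the first step and applying the parallelogram identity once more in the form $\tfrac12\b_\alpha(g+h)+\tfrac12\b_\alpha(g-h)=\b_\alpha(g)+\b_\alpha(h)$ gives $\b_\alpha(g\vee h)+\b_\alpha(g\wedge h)\leq\b_\alpha(g)+\b_\alpha(h)$. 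Since $\b_\alpha\geq 0$, each of the two nonnegative terms on the left is dominated by the right-hand side, which is the assertion.

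I expect the computation itself to be routine; the one genuinely load-bearing point is that the shift by $\alpha\geq\lambda$ renders $\b_\alpha$ nonnegative, for this is exactly what permits splitting the combined estimate on $\b_\alpha(g\vee h)+\b_\alpha(g\wedge h)$ into the two separate bounds. Without nonnegativity one of the two terms could not simply be discarded.
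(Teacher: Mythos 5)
Your proof is correct, and both parts rest on the same ingredients as the paper's proof: Lemma \ref{positive_form_Riesz}, Moreau's Theorem for the positivity of $\b_\alpha$, nonnegativity of $\b_\alpha$ for $\alpha\geq\lambda$, and the parallelogram identity. Part (a) is identical to the paper's argument. In part (b), however, your bookkeeping is genuinely different: the paper bounds $\b_\alpha(g\wedge h)$ and $\b_\alpha(g\vee h)$ \emph{individually}, expanding $\frac14\b_\alpha(g+h\mp\abs{g-h})$ and estimating the cross term via the inequality $-2\b_\alpha(u,v)\leq\b_\alpha(u)+\b_\alpha(v)$ (which is where it uses nonnegativity), whereas you sum the two quantities so that the cross terms cancel \emph{exactly} by the parallelogram law, obtaining $\b_\alpha(g\vee h)+\b_\alpha(g\wedge h)=\frac12\b_\alpha(g+h)+\frac12\b_\alpha(\abs{g-h})\leq\b_\alpha(g)+\b_\alpha(h)$, and only then invoke nonnegativity to discard one of the two terms on the left. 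Neither route is more general --- both need $\alpha\geq\lambda$, just at different places --- but yours avoids any estimate on the mixed term $\b_\alpha(g+h,\abs{g-h})$ and in addition yields the slightly stronger summed inequality $\b_\alpha(g\vee h)+\b_\alpha(g\wedge h)\leq\b_\alpha(g)+\b_\alpha(h)$ as an intermediate product; the paper's version, in exchange, gives each one-sided bound directly without passing through the sum. Your closing remark about where nonnegativity is load-bearing is accurate for your arrangement; in the paper's arrangement it is load-bearing in the cross-term estimate instead.
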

\begin{proof}
Let $g,h\in D(\b)$. By Lemma \ref{basic_Riesz} and \ref{positive_form_Riesz} we have
\begin{align*}
g\wedge h=\frac 1 2(g+h-\abs{g-h})\in D(\b)
\end{align*}
and analogously for $g\vee h$. Hence, $D(\b)$ is a sublattice of $\K$.\\
Since $\langle P_{\K^+}(g),P_{\K^+}(-g)\rangle=0$ by Moreau's Theorem \ref{Moreau}, the form $b_ \alpha$ is positive. Moreover, $\b_ \alpha(u+v)\geq 0$ implies $-2\b_ \alpha(u,v)\leq \b_ \alpha(u)+\b_ \alpha(v)$ for all $u,v\in D(\b)$. With the aid of this inequality, the positivity of $\b_ \alpha$ and the parallelogram identity we obtain
\begin{align*}
\b_ \alpha(g\wedge h)&=\frac 1 4\b_ \alpha(g+h-\abs{g-h})\\
&=\frac 1 4(\b_\alpha(g+h)+\b_\alpha\abs{g-h})-2\b_\alpha(g+h,\abs{g-h}))\\
&\leq \frac 1 2(\b_\alpha(g+h)+\b_\alpha(\abs{g-h})\\
&\leq\frac 1 2(\b_\alpha(g+h)+\b_\alpha(g-h))\\
&=\b_\alpha(g)+\b_\alpha(h).
\end{align*}
The result for $\b_\alpha(g\vee h)$ follows similarly.
\end{proof}

\begin{remark}
The theory developed so far is valid for real Hilbert spaces since that completely serves our purposes. To incorporate complex Hilbert spaces, one can proceed as follows:\\
Every complex Hilbert space $\K$ becomes a real Hilbert space $\K_{\mathrm{r}}$ when equipped with the inner product $\langle\cdot,\cdot\rangle_{\mathrm{r}}=\Re\langle\cdot,\cdot\rangle$ and a positive cone $\K^+$ in $\K$ is also a positive cone in $\K_{\mathrm{r}}$. However, self-duality of $\K^+$ is not preserved, but $\K_{\mathrm{r}}$ decomposes as
\begin{align*}
\K_{\mathrm{r}}=\K^J\oplus i\K^J
\end{align*}
with $\K^J=\K^+-\K^+$, and $\K^+$ is a self-dual cone in $\K^J$. Therefore, every element $g\in \K^+$ has a unique decomposition as
\begin{align*}
g=g_1-g_2+i(g_3-g_4)
\end{align*}
with $g_1,\dots,g_4\in\K^+$ and $\langle g_i,g_j\rangle=0$ for $i\neq j$.\\
This decomposition yields an anti-unitary involution $J$ via
\begin{align*}
J\colon \K^J\oplus i\K^J\lra\K^j\oplus  i\K^J,\,g+ih\mapsto g-ih.
\end{align*}
Positive forms on $\K$ are real in the sense that $JD(\b)=D(\b)$ and $\b(Jg)=\b(g)$.\\
Thus, there is no loss of generality when dealing exclusively with real Hilbert spaces as we can always restrict to $\K^J$ in the complex case.
\end{remark}

\subsection{Domination of operators}\label{sec_Domination_of_operators}
Domination of operators is a way to compare two operators. It is a generalization of the first Beurling-Deny criterion, which compares an operator to itself. In connection with questions of essential self-adjointness, it probably occurred first in the form of Kato's inequality comparing Schrödinger operators with and without magnetic field (cf. \cite{Kat72}).

Abstractly, it was first defined for operators on $L^2$-spaces (cf. \cite{Sim77, Sim79b}) and later generalized by Ouhabaz to the case of one operator acting on a vector valued $L^2$-space (cf. \cite{Ouh96}). In the form presented here, which follows \cite{HSU} and \cite{Ber}, it can be applied to two operators on arbitrary Hilbert spaces as long as there is a so called symmetrization map between them.

Domination will be the main tool in this article to transfer form uniqueness of one operator to form uniqueness of another, dominated operator. Often the question of uniqueness of form extensions is easier or already proven for the dominating form. Applications include magnetic Schrödinger operators on domains and graphs (cf. Chapter \ref{Applications}).

The main objective of this section after introducing the notion of domination of operators (Definition \ref{def_domination}) will be to give a characterization of domination of semigroups in terms of the associated forms (Theorem \ref{thm_char_domination}).

Throughout this section $\K$ shall denote a real Hilbert space and $\H$ a Hilbert space, either real or complex.

\begin{definition}[Symmetrization]
Let $\K^+\subset \K$ be a positive cone. A map $S\colon \H\lra\K^+$ is called absolute mapping if
\begin{itemize}
\item[(S1)]$\abs{\langle f_1,f_2\rangle}\leq\langle S(f_1),S(f_2)\rangle$ for all $f_1,f_2\in\H$ and equality if $f_1=f_2$.
\end{itemize}
An absolute mapping is called absolute pairing or symmetrization if
\begin{itemize}
\item[(S2)]For any $g\in\K^+$ and $f_1\in\H$ there is an $f_2\in\H$ such that $g=S(f_2)$ and 
\begin{align*}
\langle f_1,f_2\rangle=\langle S(f_1),S(f_2)\rangle=\langle S(f_1),g\rangle.
\end{align*}
In this case $f_1$ and $f_2$ are called $g$-paired or simply paired.
\end{itemize}
\end{definition}

Th following lemma shows that we have already encountered a natural example of an absolute mapping in the last section.
\begin{lemma}\label{absolute_mapping_Riesz}
Let $\K^+\subset\K$ be a self-dual, isotone projection cone. Then $\abs\cdot\colon\K\lra\K^+$ is an absolute mapping.
\end{lemma}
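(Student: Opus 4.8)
The plan is to verify property (S1) directly, using Moreau's Theorem \ref{Moreau} to split each argument into its orthogonal positive and negative parts. First I would note that the map is well-defined into $\K^+$: for $f\in\K$ both $f^+=P_{\K^+}(f)$ and $f^-=P_{\K^+}(-f)$ lie in $\K^+$ by construction, so $\abs{f}=f^++f^-\in\K^+$ by (P1).

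For the inequality in (S1), fix $f_1,f_2\in\K$ and decompose $f_j=f_j^+-f_j^-$ according to Moreau's Theorem. Expanding the two inner products bilinearly gives
\begin{align*}
\langle f_1,f_2\rangle &= \langle f_1^+,f_2^+\rangle + \langle f_1^-,f_2^-\rangle - \langle f_1^+,f_2^-\rangle - \langle f_1^-,f_2^+\rangle,\\
\langle \abs{f_1},\abs{f_2}\rangle &= \langle f_1^+,f_2^+\rangle + \langle f_1^-,f_2^-\rangle + \langle f_1^+,f_2^-\rangle + \langle f_1^-,f_2^+\rangle.
\end{align*}
All four cross terms are inner products of elements of $\K^+$, hence nonnegative by (P3). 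Setting $p=\langle f_1^+,f_2^+\rangle+\langle f_1^-,f_2^-\rangle\geq 0$ and $q=\langle f_1^+,f_2^-\rangle+\langle f_1^-,f_2^+\rangle\geq 0$, we have $\langle f_1,f_2\rangle=p-q$ and $\langle\abs{f_1},\abs{f_2}\rangle=p+q$, so that $\abs{\langle f_1,f_2\rangle}=\abs{p-q}\leq p+q=\langle\abs{f_1},\abs{f_2}\rangle$, which is precisely (S1).

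For the equality assertion when $f_1=f_2=f$, the decisive point is the orthogonality $\langle f^+,f^-\rangle=0$ furnished by Moreau's Theorem, which forces $q=0$; consequently $\abs{\langle f,f\rangle}=p=\langle\abs{f},\abs{f}\rangle$. (This is just a restatement of the identity $\norm{f}=\norm{\abs{f}}$ already recorded in Lemma \ref{norm_Riesz_monotone}.)

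Since the entire argument collapses to the elementary estimate $\abs{p-q}\leq p+q$ for nonnegative $p,q$, I expect no substantial obstacle. The only points requiring care are that property (P3) is genuinely what guarantees nonnegativity of the four cross terms, and that self-duality and isotonicity of the cone enter only indirectly, namely through the availability of the Moreau decomposition and of Lemma \ref{norm_Riesz_monotone}.
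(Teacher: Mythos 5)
Your proof is correct and takes essentially the same route as the paper's: both decompose each argument into its orthogonal positive and negative parts (via Moreau's Theorem~\ref{Moreau} together with Lemma~\ref{positive_part_projection}), expand $\langle f_1,f_2\rangle$ and $\langle\abs{f_1},\abs{f_2}\rangle$ bilinearly, bound the former by the latter using that all four terms are nonnegative by (P3), and settle the equality case $f_1=f_2$ through the orthogonality $\langle f^+,f^-\rangle=0$, i.e.\ Lemma~\ref{norm_Riesz_monotone}. Your grouping into $p$ and $q$ with $\abs{p-q}\leq p+q$ is merely a tidier phrasing of the paper's term-by-term triangle inequality, so there is nothing substantively different to compare.
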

\begin{proof}
Let $g,h\in\K$. Then we have
\begin{align*}
\abs{\langle g,h\rangle}&=\abs{\langle g^+-g^-,h^+-h^-\rangle}\\
&=\abs{\langle g^+,h^+\rangle-\langle g^-,h^+\rangle-\langle g^+,h^-\rangle+\langle g^+,h^+\rangle}\\
&\leq \langle g^+,h^+\rangle+\langle g^-,h^+\rangle+\langle g^+,h^-\rangle+\langle g^+,h^+\rangle\\
&=\langle g^++g^-,h^++h^-\rangle\\
&=\langle \abs{g},\abs{h}\rangle.
\end{align*}
Equality in the case $g=h$ was already shown in Lemma \ref{norm_Riesz_monotone}.
\end{proof}

\begin{example}
The norm $\norm{\cdot}\colon\H\lra [0,\infty)$ is a symmetrization. For $\lambda>0$ and $f_1\in\H$ an element $f_2\in\H$ such that $f_1$ and $f_2$ are $\lambda$-paired is given by $f_2=\lambda\frac{f_1}{\norm{f_1}}$ if $f_1\neq 0$, and by $f_2=\lambda\xi$ for any $\xi\in\H$ with $\norm{\xi}=1$ if $f_1=0$.
\end{example}

\begin{example}\label{ex_sym_L2}
Let $(X,\B,m)$ be a measure space and $H$ a Hilbert space. Denote the norm on $H$ by $\abs\cdot$. Then $S\colon L^2(X,m;H)\lra L^2(X,m),\,(Sf)(x)=\abs{f(x)}$ is a symmetrization. Property (S1) is obvious. For (S2) choose $f_2=g\sgn_\xi f_1$, where $\sgn_\xi f_1$ is defined by
\begin{align*}
\sgn_\xi f_1(x)=\begin{cases}\frac{f_1(x)}{\abs{f_1(x)}}&\colon f_1(x)\neq 0\\\xi&\colon f_1(x)=0\end{cases}
\end{align*}
for some $\xi\in H$ with $\abs\xi=1$. Then $f_1$ and $f_2$ are $g$-paired.
\end{example}

\begin{example}
Let $X$ be a topological space, $m$ a Borel measure on $X$ and $E$ a Hermitian vector bundle over $X$, that is, a vector bundle with an inner product on the fibers that varies continuously with the base point (see \cite{MS74}, where the term \textit{Euclidean vector bundle} is used). Denote by $L^2(X,m;E)$ the space of $L^2$-sections in $E$. Then
\begin{align*}
S\colon L^2(X,m;E)\lra L^2(X,m)^+,\,(Sf)(x)=\abs{f(x)}_x
\end{align*}
is a symmetrization by the same arguments as in the example above.
\end{example}

As these examples suggest, a symmetrization can be understood as a generalization of the (pointwise) modulus of a function. In the following lemmas we collect some basic properties of the modulus that carry over to abstract symmetrizations.

\begin{lemma}\label{symmetrization_triangle}
Let $\K^+\subset\K$ be a positive cone, and $S\colon\H\lra\K^+$ a symmetrization.
\begin{itemize}
\item[(a)]The triangle inequality holds:
\begin{align*}
\langle S(f_1+f_2),g\rangle\leq \langle S(f_1)+S(f_2),g\rangle
\end{align*}
for all $f_1,f_2\in\H$ and $g\in\K^+$.
\item[(b)]The symmetrization $S$ is positive homogeneous:
\begin{align*}
S(\alpha f)=\abs{\alpha} S(f)
\end{align*}
for all $f\in\H$ and $\alpha\in\IC$.
\item[(c)]The symmetrization $S$ is positive definite: For all $f\in\H$, $S(f)=0$ if and only if $f=0$.
\end{itemize}
\end{lemma}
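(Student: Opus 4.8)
The plan is to extract one elementary consequence of (S1) first and then read off all three parts from it, handling them in the order (c), (b), (a) since each later part reuses earlier observations. The key preliminary is the \emph{isometry property}: taking $f_1=f_2=f$ in the equality case of (S1) gives
\[
\norm{S(f)}^2=\langle S(f),S(f)\rangle=\abs{\langle f,f\rangle}=\norm{f}^2,
\]
so $\norm{S(f)}=\norm{f}$ for every $f\in\H$, and in particular $S(0)=0$. Part (c) is then immediate: if $S(f)=0$ then $\norm{f}=\norm{S(f)}=0$, so $f=0$; conversely $S(0)=0$.

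For (b) I would first dispose of the trivial cases $\alpha=0$ and $f=0$ using $S(0)=0$, and otherwise compute the squared distance between the two candidate vectors,
\[
\norm{S(\alpha f)-\abs{\alpha}S(f)}^2=\norm{S(\alpha f)}^2-2\abs{\alpha}\langle S(\alpha f),S(f)\rangle+\abs{\alpha}^2\norm{S(f)}^2 .
\]
By the isometry property the two outer terms together equal $2\abs{\alpha}^2\norm{f}^2$, so everything comes down to pinning the cross term to $\abs{\alpha}\norm{f}^2$. From (S1) one has the lower bound $\langle S(\alpha f),S(f)\rangle\geq\abs{\langle\alpha f,f\rangle}=\abs{\alpha}\norm{f}^2$, while Cauchy--Schwarz in $\K$ combined with the isometry property gives the matching upper bound $\langle S(\alpha f),S(f)\rangle\leq\norm{S(\alpha f)}\norm{S(f)}=\abs{\alpha}\norm{f}^2$. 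Hence the cross term equals $\abs{\alpha}\norm{f}^2$, the squared distance vanishes, and $S(\alpha f)=\abs{\alpha}S(f)$.

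For (a) I would exploit the pairing property (S2). Given $g\in\K^+$, apply (S2) to $g$ and $f_1+f_2$ to produce $f_3\in\H$ with $g=S(f_3)$ and $\langle f_1+f_2,f_3\rangle=\langle S(f_1+f_2),g\rangle$. Splitting the left-hand side and bounding each summand via (S1),
\[
\langle f_i,f_3\rangle\leq\abs{\langle f_i,f_3\rangle}\leq\langle S(f_i),S(f_3)\rangle=\langle S(f_i),g\rangle ,
\]
and summing over $i=1,2$ yields the stated triangle inequality.

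The main obstacle is the cross-term identity in (b). The isometry property alone only delivers $\norm{S(\alpha f)}=\norm{\abs{\alpha}S(f)}$, which is far short of the vector equality we want; it is precisely the squeeze between the (S1) lower bound and the Cauchy--Schwarz upper bound that forces the cross term to its extremal value and upgrades norm equality to the genuine identity $S(\alpha f)=\abs{\alpha}S(f)$. Once the isometry property and the $g$-pairing from (S2) are in hand, the remaining steps are routine bookkeeping.
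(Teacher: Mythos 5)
Your proposal is correct and follows essentially the same route as the paper: the isometry $\norm{S(f)}=\norm{f}$ from the equality case of (S1), part (c) as an immediate consequence, part (b) by expanding a squared norm and bounding the cross term below via (S1), and part (a) by pairing $f_1+f_2$ with $g$ via (S2) and applying (S1) termwise. The only cosmetic difference is that you prove (b) in a single norm computation where the paper uses two (first $\lambda\geq 0$, then $S(\alpha f)=S(\abs{\alpha}f)$), and your Cauchy--Schwarz upper bound on the cross term is actually superfluous, since the (S1) lower bound alone already forces the nonnegative squared distance to vanish.
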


\begin{proof}
\begin{itemize}
\item[(a)]Let $f_1,f_2\in\H$ and $g\in\K^+$. Choose $f\in \H$ such that $S(f)=g$, $\langle S(f_1+f_2),S(f)\rangle=\langle f_1+f_2,f\rangle$. Then we have
\begin{align*}
\langle S(f_1+f_2),g\rangle&=\langle S(f_1+f_2),S(f)\rangle\\
&=\langle f_1+f_2,f\rangle\\
&\leq \langle S(f_1),S(f)\rangle+\langle S(f_2),S(f)\rangle\\
&=\langle S(f_1)+S(f_2),g\rangle.
\end{align*}
\item[(b)]Let $\alpha\in \IC,f\in\H$. Then we have
\begin{align*}
\norm{S(\alpha f)}^2=\norm{\alpha f}^2=\abs{\alpha}^2\norm{f}^2=\abs{\alpha}^2\norm{S(f)}^2.
\end{align*}
Next let $\lambda\geq 0$. Then we have
\begin{align*}
 \norm{S(\lambda f)-\lambda S(f)}^2&=\norm{S(\lambda f)}^2+\lambda^2\norm{f}^2-2\lambda\langle S(\lambda f),S(f)\rangle\\
&\leq 2\lambda^2\norm{f}^2-2\lambda \langle \lambda f,f\rangle\\
&=0.
\end{align*}
Moreover,
\begin{align*}
\norm{S(\alpha f)-S(\abs{\alpha}f)}^2&=2\abs{\alpha}^2\norm{f}^2-2\langle S(\alpha f),S(\abs{\alpha}f)\rangle\\
&\leq 2\abs{\alpha}^2\norm{f}^2-2\abs{\langle \alpha f,\abs{\alpha}f\rangle}\\
&=0.
\end{align*}
So we finally obtain $S(\alpha f)=S(\abs{\alpha}f)=\abs{\alpha}S(f)$.
\item[(c)]This is immediate from $\norm{S(f)}=\norm{f}$.\qedhere
\end{itemize}
\end{proof}

\begin{lemma}\label{symmetrization_Lipschitz}
Let $\K^+\subset \K$ be a positive cone and $S\colon\H\lra \K^+$ a symmetrization. Then $S$ is Lipschitz continuous.
\end{lemma}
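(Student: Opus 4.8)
The claim is that a symmetrization $S\colon\H\lra\K^+$ is Lipschitz continuous. The plan is to show the explicit Lipschitz constant $1$, i.e. $\norm{S(f_1)-S(f_2)}\leq\norm{f_1-f_2}$ for all $f_1,f_2\in\H$, exploiting the norm-preservation $\norm{S(f)}=\norm{f}$ (Lemma \ref{symmetrization_triangle}(c) and its proof) together with property (S1).

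First I would expand the square $\norm{S(f_1)-S(f_2)}^2=\norm{S(f_1)}^2+\norm{S(f_2)}^2-2\langle S(f_1),S(f_2)\rangle$. Using the equality case of (S1), which gives $\norm{S(f)}^2=\langle S(f),S(f)\rangle=\langle f,f\rangle=\norm{f}^2$, I can replace the first two terms by $\norm{f_1}^2+\norm{f_2}^2$. For the cross term I would invoke the inequality part of (S1) in the form $\langle S(f_1),S(f_2)\rangle\geq\abs{\langle f_1,f_2\rangle}\geq\Re\langle f_1,f_2\rangle$, so that $-2\langle S(f_1),S(f_2)\rangle\leq -2\Re\langle f_1,f_2\rangle$. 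Combining these estimates yields
\begin{align*}
\norm{S(f_1)-S(f_2)}^2\leq\norm{f_1}^2+\norm{f_2}^2-2\Re\langle f_1,f_2\rangle=\norm{f_1-f_2}^2,
\end{align*}
which is exactly the desired $1$-Lipschitz bound and thus establishes continuity.

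The one subtlety to watch is the sign and the passage from $\langle S(f_1),S(f_2)\rangle$ to a lower bound for the real part of $\langle f_1,f_2\rangle$: since $S$ maps into $\K^+$ and (P3) guarantees $\langle S(f_1),S(f_2)\rangle\geq 0$, the quantity $\langle S(f_1),S(f_2)\rangle$ is real, and (S1) bounds it below by $\abs{\langle f_1,f_2\rangle}$, which in turn dominates $\Re\langle f_1,f_2\rangle$ regardless of whether $\H$ is real or complex. This is really the only place where any care is needed, and it is mild; the rest is the parallelogram-type expansion. I do not expect a genuine obstacle here — the result is a direct consequence of the axioms (S1) and the norm identity, and the proof is essentially a single computation once the cross term is estimated correctly.
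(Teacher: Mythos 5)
Your proof is correct and follows exactly the paper's argument: expand $\norm{S(f_1)-S(f_2)}^2$, use the equality case of (S1) to replace $\norm{S(f_i)}^2$ by $\norm{f_i}^2$, and bound the cross term via $\langle S(f_1),S(f_2)\rangle\geq\abs{\langle f_1,f_2\rangle}\geq\Re\langle f_1,f_2\rangle$, yielding the Lipschitz constant $1$. The only difference is cosmetic: you spell out the intermediate step $\abs{\langle f_1,f_2\rangle}\geq\Re\langle f_1,f_2\rangle$, which the paper leaves implicit in its final inequality.
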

\begin{proof}
For $f_1,f_2\in\H$ we have
\begin{align*}
\norm{S(f_1)-S(f_2)}^2&=\norm{S(f_1)}^2+\norm{S(f_2)}^2-2\langle S(f_1),S(f_2)\rangle\\
&=\norm {f_1}^2+\norm {f_2}^2-2\langle S(f_1),S(f_2)\rangle\\
&\leq \norm {f_1}^2+\norm {f_2}^2-2 \abs{\langle f_1,f_2\rangle}\\
&\leq\norm {f_1-f_2}^2.\qedhere
\end{align*}
\end{proof}

\begin{lemma}\label{abs_difference}
Let $\K^+\subset\K$ be a positive cone and $S\colon \H\lra\K^+$ a symmetrization. If $f_1,f_2\in\H$ such that $S(f_2)\leq S(f_1)$ and $f_1,f_2$ are $S(f_2)$-paired, then $S(f_1-f_2)=S(f_1)-S(f_2)$ and $f_1-f_2, f_2$ are $S(f_2)$-paired.
\end{lemma}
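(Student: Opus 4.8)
The plan is to prove the norm identity first, deduce $S(f_1-f_2)=S(f_1)-S(f_2)$ by a squeeze argument, and finally read off the pairing as a formal consequence. Throughout I would write $g=S(f_2)$, $h=f_1-f_2$ and $u=S(f_1)-S(f_2)$. The hypothesis $S(f_2)\leq S(f_1)$ means precisely $u\in\K^+$, while the pairing hypothesis means $\langle f_1,f_2\rangle=\langle S(f_1),S(f_2)\rangle$; note in particular that this forces $\langle f_1,f_2\rangle$ to be a nonnegative real number, since the right-hand side lies in $\IR$.

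First I would record that $\norm{S(h)}=\norm{u}=\norm{h}$. Indeed, (S1) gives $\norm{S(f_i)}=\norm{f_i}$, so expanding $\norm{u}^2=\norm{S(f_1)-S(f_2)}^2$ and $\norm{h}^2=\norm{f_1-f_2}^2$ and using the pairing identity $\langle f_1,f_2\rangle=\langle S(f_1),S(f_2)\rangle$ to match the two cross terms shows $\norm{u}^2=\norm{h}^2$; finally $\norm{S(h)}=\norm{h}$ is again the equality part of (S1). This step is routine bookkeeping.

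The heart of the matter is to show $S(h)=u$. Applying the triangle inequality of Lemma \ref{symmetrization_triangle}(a) to the decomposition $f_1=h+f_2$ gives $\langle S(f_1),g'\rangle\leq\langle S(h)+S(f_2),g'\rangle$ for every $g'\in\K^+$, that is, $\langle u,g'\rangle\leq\langle S(h),g'\rangle$ for all $g'\in\K^+$. Here is the only place the ordering hypothesis enters, and it is the crux of the argument: since $\K^+$ is \emph{not} assumed self-dual, I cannot pass from this family of inequalities to a pointwise statement, so I must test against a cleverly chosen element, namely $u$ itself, which is admissible precisely because $u\in\K^+$. Taking $g'=u$ yields $\norm{u}^2\leq\langle S(h),u\rangle$. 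Combined with $\norm{S(h)}=\norm{u}$ from the previous step, this squeezes $\norm{S(h)-u}^2=2\norm{u}^2-2\langle S(h),u\rangle\leq 0$, whence $S(h)=u$, i.e. $S(f_1-f_2)=S(f_1)-S(f_2)$.

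The pairing statement then follows formally. Using the pairing hypothesis together with $\norm{S(f_2)}^2=\norm{f_2}^2$, I would compute $\langle f_1-f_2,f_2\rangle=\langle f_1,f_2\rangle-\norm{f_2}^2=\langle S(f_1),S(f_2)\rangle-\langle S(f_2),S(f_2)\rangle=\langle S(f_1)-S(f_2),S(f_2)\rangle$, and substituting $S(f_1)-S(f_2)=S(f_1-f_2)$ gives $\langle f_1-f_2,f_2\rangle=\langle S(f_1-f_2),S(f_2)\rangle=\langle S(f_1-f_2),g\rangle$, which is exactly the condition for $f_1-f_2$ and $f_2$ to be $S(f_2)$-paired. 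The single genuine difficulty in the whole proof is recognizing that the ordering hypothesis should be exploited by choosing the test vector $g'=u$ in the triangle inequality; everything else is manipulation of (S1) and the pairing identity.
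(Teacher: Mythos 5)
Your proof is correct and follows essentially the same route as the paper's: both establish $\norm{S(f_1-f_2)}=\norm{S(f_1)-S(f_2)}$ from (S1) and the pairing identity, exploit the triangle inequality together with the hypothesis $S(f_1)-S(f_2)\in\K^+$, conclude $S(f_1-f_2)=S(f_1)-S(f_2)$ by expanding a squared norm to $\leq 0$, and then read off the pairing by the same one-line computation. If anything, your version is slightly more careful: the paper asserts the pointwise inequality $S(f_1-f_2)\geq S(f_1)-S(f_2)$, which from the tested triangle inequality of Lemma \ref{symmetrization_triangle} would require self-duality of $\K^+$ (not assumed here), whereas your choice to test against $g'=S(f_1)-S(f_2)\in\K^+$ uses only the tested form and is exactly the reading under which the paper's subsequent argument goes through.
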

\begin{proof}
By the triangle inequality, $S(f_1-f_2)\geq S(f_1)-S(f_2)$ holds. Moreover we have
\begin{align*}
\norm{S(f_1-f_2)}^2&=\norm{f_1-f_2}^2\\
&=\norm{f_1}^2+\norm{f_2}^2-2\langle f_1,f_2\rangle\\
&=\norm{S(f_1)}^2+\norm{S(f_2)}^2-2\langle S(f_1),S(f_2)\rangle\\
&=\norm{S(f_1)-S(f_2)}^2.
\end{align*}
Let $g,g'\in\K^+$ such that $g\leq g'$ and $\norm g=\norm{g'}$. Then we have
\begin{align*}
\norm{g-g'}^2&=\norm{g}^2+\norm{g'}^2-2\langle g,g'\rangle\\
&=2\norm{g}^2-2\langle g,g+g'-g\rangle\\
&=-2\langle g,g'-g\rangle\\
&\leq 0.
\end{align*}
Hence $g=g'$. Applying this result to $g=S(f_1)-S(f_2), g'=S(f_1-f_2)$, we obtain the desired equality for $S(f_2-f_1)$.\\
Moreover,
\begin{align*}
\langle f_2-f_1,f_2\rangle=\langle S(f_2)-S(f_1),S(f_2)\rangle=\langle S(f_1-f_2),S(f_2)\rangle,
\end{align*}
hence $f_1-f_2$ and $f_2$ are paired.
\end{proof}

The next lemma will serve as a characterization of the central concept of this section, namely domination of operators.
\begin{lemma}\label{abs_char_domination}
Let $\K^+\subset \K$ be a positive cone and $S\colon\H\lra\K^+$ a symmetrization. For bounded operators $P$ (resp. $Q$) on $\H$ (resp. $\K$), the following are equivalent:
\begin{itemize}
\item[(i)]$\langle S(P f_1),g\rangle\leq \langle Q S (f_1),g\rangle$ for all $f_1\in \H,\,g\in\K^+$
\item[(ii)]$\Re\langle  Pf_1,f_2\rangle\leq \langle Q S(f_1),S(f_2)\rangle$ for all $f_1,f_2\in\H$
\item[(iii)]$\abs{\langle P f_1,f_2\rangle}\leq \langle Q S(f_1),S(f_2)\rangle$ for all $f_1,f_2\in\H$
\end{itemize}
Furthermore, if $\K^+$ is self-dual, these assertions are equivalent to
\begin{itemize}
\item[(iv)]$S(P f_1)\leq Q S(f_1)$ for all $f_1\in \H$.
\end{itemize}
\end{lemma}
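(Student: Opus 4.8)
The plan is to prove the equivalence of (i)--(iii) by the cyclic chain (iii)$\implies$(ii)$\implies$(i)$\implies$(iii), and to treat the extra equivalence with (iv) separately once $\K^+$ is assumed self-dual. The guiding principle is that the two axioms of a symmetrization play complementary roles: (S1) lets one pass from a scalar product in $\H$ upward to one of moduli in $\K^+$, while (S2) is what permits the return trip, realizing a prescribed cone element $g\in\K^+$ as $g=S(f_2)$ together with the pairing identity. Everything else is elementary, so I would keep track of exactly where each of (S1), (S2) enters.

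For the routine links, (iii)$\implies$(ii) is immediate from $\Re\langle Pf_1,f_2\rangle\leq\abs{\langle Pf_1,f_2\rangle}$, valid for any complex $\H$. For (i)$\implies$(iii) I would fix $f_1,f_2\in\H$, apply (S1) to the pair $Pf_1,f_2$ to get $\abs{\langle Pf_1,f_2\rangle}\leq\langle S(Pf_1),S(f_2)\rangle$, and then invoke (i) with the admissible choice $g=S(f_2)\in\K^+$ to bound $\langle S(Pf_1),S(f_2)\rangle\leq\langle QS(f_1),S(f_2)\rangle$; concatenating the two inequalities is exactly (iii).

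The one step carrying real content is (ii)$\implies$(i), and this is where (S2) is indispensable; I expect it to be the main obstacle in the sense that it is the sole place the full pairing axiom, rather than the mere absolute-mapping inequality (S1), is needed. Given $f_1\in\H$ and $g\in\K^+$, I would apply (S2) with the vector $Pf_1$ in the role of the first argument to obtain an $f_2\in\H$ with $S(f_2)=g$ and $\langle Pf_1,f_2\rangle=\langle S(Pf_1),g\rangle$. The crucial observation is that the right-hand side is an inner product of two elements of the \emph{real} space $\K$, hence real, so that $\Re\langle Pf_1,f_2\rangle=\langle S(Pf_1),g\rangle$. Feeding this particular $f_2$ into (ii) together with the original $f_1$, and using $S(f_2)=g$ on the right, then yields $\langle S(Pf_1),g\rangle\leq\langle QS(f_1),g\rangle$, which is precisely (i).

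Finally, under self-duality I would establish (i)$\iff$(iv) directly from the definition of the dual cone. Rewriting (i) as $\langle QS(f_1)-S(Pf_1),g\rangle\geq 0$ for all $g\in\K^+$ says exactly that $QS(f_1)-S(Pf_1)$ lies in $(\K^+)^\circ$, and self-duality identifies $(\K^+)^\circ$ with $\K^+$; membership in $\K^+$ is the relation $S(Pf_1)\leq QS(f_1)$ of (iv). Since each direction of this last argument is reversible, no further work is required, and (iv) joins the previously closed cycle.
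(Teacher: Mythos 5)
Your proof is correct and follows essentially the same route as the paper: the same cycle (iii)$\implies$(ii)$\implies$(i)$\implies$(iii), with (S2) applied to $Pf_1$ and the realness of $\langle S(Pf_1),g\rangle$ in the key step (ii)$\implies$(i), and the same self-duality argument identifying (i) with (iv). No gaps.
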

\begin{proof}
(iii)$\implies$(ii): This is trivial.\\
(ii)$\implies$(i): Let $f_1\in \H, g\in\K^+$ and choose $f_2\in \H$ such that $Pf_1$ and $f_2$ are $g$-paired. Then we have
\begin{align*}
\langle S(Pf_1),g\rangle=\Re\langle S(Pf_1),g\rangle=\Re\langle Pf_1,f_2\rangle \leq \langle Q S(f_1),g\rangle.
\end{align*}
(i)$\implies$(iii): Let $g=S(f_2)$. Then we have
\begin{align*}
\abs{\langle Pf_1,f_2\rangle}\leq \langle S(P f_1),S(f_2)\rangle=\langle S(P f_1),g\rangle\leq \langle Q S(f_1),g\rangle=\langle Q S(f_1),S(f_2)\rangle.
\end{align*}
Next assume that $\K^+$ is self-dual and let $f_1\in \H$. Then $Q S(f_1)-S(Pf_1)\in \K^+$ if and only if
\begin{align*}
\langle Q S(f_1)-S(Pf_1),g\rangle\geq 0
\end{align*}
for all $g\in \K^+$. Thus, (iv) is equivalent to (i).
\end{proof}

\begin{definition}[Domination of operators]\label{def_domination}
If $P$ and $Q$ satisfy one of the equivalent assertions of Lemma \ref{abs_char_domination}, then $P$ is said to be dominated by $Q$. Likewise we say that a family of bounded operators $(P_\alpha)_{\alpha\in\A}$ is dominated by the family of bounded operators $(Q_\alpha)_{\alpha\in \A}$ if $P_\alpha$ is dominated by $Q_\alpha$ for all $\alpha\in \A$.
\end{definition}

\begin{example}\label{ex_positive_semigroup_dominating}
Let $(X,\B,m)$ be a measure space and $P\colon L^2(X,m)\lra L^2(X,m)$ a linear operator that leaves $L^2(X,m)^+$ invariant. Then $P$ is dominated by itself:\\
For all $f\in L^2(X,m)$, we have
\begin{align*}
\abs{Pf}=\abs{P(f^+-f^-)}\leq \abs{Pf^+}+\abs{Pf^-}=P f^++P f^-=P\abs{f}.
\end{align*}
Indeed, also the converse is true: If $P$ is dominated by itself, then $Pf=P\abs{f}\geq \abs{Pf}\geq 0$ for all $f\in L^2(X,m)^+$, hence $P L^2(X,m)^+\subset L^2(X,m)^+$.
\end{example}

We will give more interesting examples (in particular such that have different operators $P$ and $Q$) once we have a characterization of domination of semigroups in terms of the associated forms at hand.

But before we turn to this characterization, we present some basic algebraic properties of domination.

\begin{lemma}\label{domination_sums}
Let $\K^+\subset \K$ be a positive cone, $S\colon\H\lra\K^+$ a symmetrization, and $P_1,P_2$ (resp. $Q_1,Q_2$) bounded self-adjoint operators on $\H$ (resp. $\K$).
\begin{itemize}
\item[(a)]Let $\alpha_1,\alpha_2\in\IC$. If $P_i$ is dominated by $Q_i$, $i\in\{1,2\}$, then $\alpha_1P_1+\alpha_2P_2$ is dominated by $\abs{\alpha_1}Q_1+\abs{\alpha_2}Q_2$.
\item[(b)]If $\K^+$ is self-dual and $P_1$ is dominated by $Q_1$, then $Q_1$ preserves the cone $\K^+$.\\
If $P_i$ is dominated by $Q_i$, $i\in\{1,2\}$, and $Q_1$ preserves $\K^+$, then $P_1P_2$ is dominated by $Q_1 Q_2$.
\end{itemize}
\end{lemma}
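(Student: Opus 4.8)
For part (a), the plan is to verify characterization (iii) of Lemma \ref{abs_char_domination} directly, since it applies to arbitrary bounded operators and needs neither self-adjointness nor self-duality. Fixing $f_1,f_2\in\H$, I would use linearity of the inner product together with the triangle inequality for the modulus on $\IC$ to get
\[
\abs{\langle(\alpha_1 P_1+\alpha_2 P_2)f_1,f_2\rangle}\leq\abs{\alpha_1}\abs{\langle P_1 f_1,f_2\rangle}+\abs{\alpha_2}\abs{\langle P_2 f_1,f_2\rangle}.
\]
Applying assertion (iii) to each domination of $P_i$ by $Q_i$ and then using bilinearity of $\langle\cdot,\cdot\rangle$ on the real space $\K$, the right-hand side is bounded by $\langle(\abs{\alpha_1}Q_1+\abs{\alpha_2}Q_2)S(f_1),S(f_2)\rangle$, which is exactly (iii) for the asserted domination. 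This part is entirely routine.

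For the first assertion of (b) I would exploit that self-duality makes the pointwise characterization (iv) available, namely $S(P_1 f)\leq Q_1 S(f)$ for all $f\in\H$. Given $g\in\K^+$, property (S2) of the symmetrization yields an $f\in\H$ with $S(f)=g$, so $Q_1 g=Q_1 S(f)\geq S(P_1 f)$. Since $S$ takes values in $\K^+$ we have $S(P_1 f)\geq 0$, and transitivity of $\leq$ forces $Q_1 g\geq 0$, i.e.\ $Q_1 g\in\K^+$. As $g\in\K^+$ was arbitrary, $Q_1$ preserves $\K^+$.

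For the second assertion of (b) I again target characterization (iii), but now processing the two factors one at a time. Writing $P_1 P_2 f_1=P_1(P_2 f_1)$ and applying (iii) to the domination of $P_1$ by $Q_1$ gives $\abs{\langle P_1 P_2 f_1,f_2\rangle}\leq\langle Q_1 S(P_2 f_1),S(f_2)\rangle$. I would then move $Q_1$ across the inner product using its self-adjointness, rewriting this as $\langle S(P_2 f_1),Q_1 S(f_2)\rangle$. The essential observation is that $Q_1 S(f_2)\in\K^+$, because $Q_1$ preserves $\K^+$ and $S(f_2)\in\K^+$; this makes $g=Q_1 S(f_2)$ a legitimate test element for characterization (i) applied to the domination of $P_2$ by $Q_2$, so that
\[
\langle S(P_2 f_1),Q_1 S(f_2)\rangle\leq\langle Q_2 S(f_1),Q_1 S(f_2)\rangle=\langle Q_1 Q_2 S(f_1),S(f_2)\rangle,
\]
the final equality using $Q_1=Q_1^\ast$ once more. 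Chaining these inequalities yields (iii) for $P_1 P_2$ dominated by $Q_1 Q_2$.

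The delicate point is concentrated in this last assertion. Because self-duality is \emph{not} assumed there, the clean order inequality (iv) is unavailable, and one cannot simply compose $S(P_2 f_1)\leq Q_2 S(f_1)$ with the pointwise bound for $P_1$. Instead everything must stay paired against a test vector via characterization (i), and the argument hinges on $Q_1$ playing a double role: it must be self-adjoint, so that it can be transported from one side of the inner product to the other, and it must preserve $\K^+$, so that $Q_1 S(f_2)$ is an admissible positive test element. This is precisely why the hypothesis that $Q_1$ (rather than $Q_2$) preserves $\K^+$ is imposed, and getting this role of $Q_1$ right is the main obstacle.
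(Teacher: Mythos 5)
Your proof is correct and takes essentially the same route as the paper's: part (a) via characterization (iii) with the triangle inequality, the cone-preservation claim via the self-dual characterization (your use of (iv) versus the paper's use of (i) plus self-duality is a trivial rephrasing), and the product claim by the identical chain—apply domination of $P_1$ by $Q_1$ first, move $Q_1$ across the inner product by self-adjointness, and use cone preservation so that $Q_1 S(f_2)$ (in the paper, $Q_1 g$ for arbitrary $g\in\K^+$) is an admissible test vector for the domination of $P_2$ by $Q_2$. The only cosmetic difference is that you verify (iii) where the paper verifies (i), which are equivalent by Lemma \ref{abs_char_domination} without any self-duality assumption, so nothing is lost.
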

\begin{proof}
\begin{itemize}
\item[(a)]We show that (iii) of Lemma \ref{abs_char_domination} is satisfied. Indeed, for all $f_1,f_2\in\H$ we have:
\begin{align*}
\abs{\langle (\alpha_1 P_1+\alpha_2 P_2)f_1,f_2\rangle}&\leq\abs{\alpha_1}\abs{\langle P_1 f_1,f_2\rangle}+\abs{\alpha_2}\abs{\langle P_2 f_1,f_2\rangle}\\
&\leq \abs{\alpha_1}\langle Q_1 S(f_1),S(f_2)\rangle+\abs{\alpha_2}\langle Q_2 S(f_1),S(f_2)\rangle\\
&=\langle (\abs{\alpha_1}Q_1+\abs{\alpha_2} Q_2) S(f_1),S(f_2)\rangle
\end{align*}
\item[(b)]Let $g\in \K^+$. If $\K^+$ is self-dual and $P_1$ is dominated by $Q_1$, then Lemma \ref{abs_char_domination} (i) implies
\begin{align*}
0\leq\langle S(P_1 f_1),h\rangle\leq \langle Q_1 g,h\rangle
\end{align*}
for all $h\in\K^+$ and $f_1\in \H$ such that $S(f_1)=g$. Hence, $Q_1 g\in\K^+$.\\
Next assume that $Q_1$ preserves $\K^+$ and that $P_i$ is dominated by $Q_i$, $i\in\{1,2\}$. Let $f\in \H$, $g\in\K^+$. Using \ref{abs_char_domination} (i), we obtain
\begin{align*}
\langle S(P_1P_2 f),g\rangle&\leq\langle Q_1 S(P_2f),g\rangle\\
&= \langle S(P_2f),Q_1 g\rangle\\
&\leq \langle Q_2 S(f),Q_1 g\rangle\\
&=\langle Q_1Q_2 S(f),g\rangle.
\end{align*}
Thus, $P_1 P_2$ is dominated by $Q_1 Q_2$.\qedhere
\end{itemize}
\end{proof}

The next proposition is the main tool in the characterization of domination of semigroups in terms of the associated forms. It is an abstract version of Lemma 3.2 in \cite{MVV}. The proof given there relies on pointwise consideration that are not applicable in our setting. Our proof is a bit lengthy, but elementary as it only uses the abstract properties of absolute mappings and isotone projection cones.

\begin{proposition}\label{domination_invariance}
Let $\K^+\subset\K$ be a self-dual positive cone, $S\colon \H\lra \K^+$ an absolute mapping, and $(P_t)$ (resp. $(Q_t))$ a semigroup on $\H$ (resp. $\K$).\\
Define the semigroup $(W_t)$ on $\H\oplus\K$ via
\begin{align*}
W_t(f,g)=(P_t f,Q_t g)
\end{align*}
for $t\geq 0,\,(f,g)\in\H\oplus\K$. Define the set
\begin{align*}
C=\{(u,v)\in\H\oplus\K\mid S(u)\leq v\}.
\end{align*}
\begin{itemize}
\item[(a)]The set $C$ is a closed, convex subset of $\H\oplus \K$.
\item[(b)]The semigroup $(P_t)$ is dominated by $(Q_t)$ if and only if $C$ is invariant under $(W_t)$.
\item[(c)]Let $g\in \K^+$ and $f_1\in \H$ with $g\leq S(f_1)$. Whenever there is an $f_2\in\H$ such that $f_1,f_2$ are $g$-paired, the projection $P_C$ onto $C$ satisfies
\begin{align*}
P_C(f_1,g)=\frac 1 2(f_1+f_2,S(f_1)+g).
\end{align*}
\item[(d)]If $\K^+$ is a self-dual isotone projection cone, the projection $P_C$ onto $C$ satisfies
\begin{align*}
P_C(f_1,g)=\frac 1 2(f_2, (S(f_1)\vee g+g)^+),
\end{align*}
for $f_1\in\H,g\in \K$ whenever there is an $f_2\in\H$ such that $f_1,f_2$ are $(S(f_1)\wedge g+S(f_1))^+$-paired.

\end{itemize}
\end{proposition}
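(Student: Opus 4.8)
The plan is to establish the four parts in order, the real content being the two projection formulas (c) and (d). For (a), closedness follows from the Lipschitz continuity of $S$ (Lemma \ref{symmetrization_Lipschitz}): if $(u_n,v_n)\to(u,v)$ with $S(u_n)\le v_n$, then $v-S(u)=\lim(v_n-S(u_n))\in\K^+$ since $\K^+$ is closed. Convexity comes from the triangle inequality together with positive homogeneity (Lemma \ref{symmetrization_triangle}), giving $S(\lambda u_1+(1-\lambda)u_2)\le\lambda S(u_1)+(1-\lambda)S(u_2)\le\lambda v_1+(1-\lambda)v_2$. For (b) I would use the self-dual characterization (iv) of Lemma \ref{abs_char_domination}: invariance of $C$ under $(W_t)$, applied to the point $(f,S(f))\in C$, yields $S(P_tf)\le Q_tS(f)$, i.e. $P_t$ is dominated by $Q_t$; conversely, domination forces $Q_t$ to preserve $\K^+$ (Lemma \ref{domination_sums}(b)), hence $Q_t$ is monotone, so $S(u)\le v$ gives $S(P_tu)\le Q_tS(u)\le Q_tv$, i.e. $W_t(u,v)\in C$.

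For (c) I would verify the two defining properties of the metric projection onto the closed convex set $C$. Membership $z\in C$ is just the triangle inequality and homogeneity: $S(\tfrac12(f_1+f_2))\le\tfrac12(S(f_1)+S(f_2))=\tfrac12(S(f_1)+g)$. For the variational inequality $\Re\langle(f_1,g)-z,(u,v)-z\rangle\le0$, a short computation shows that the terms not involving $(u,v)$ cancel (using $\norm{f_i}=\norm{S(f_i)}$ and $\langle f_1,f_2\rangle=\langle S(f_1),g\rangle$ from the pairing), reducing the claim to $\Re\langle f_1-f_2,u\rangle\le\langle S(f_1)-g,v\rangle$ for all $(u,v)\in C$. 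The key step is Lemma \ref{abs_difference}: since $S(f_2)=g\le S(f_1)$ and $f_1,f_2$ are $g$-paired, it gives $S(f_1-f_2)=S(f_1)-g$; then (S1) yields $\Re\langle f_1-f_2,u\rangle\le\langle S(f_1-f_2),S(u)\rangle=\langle S(f_1)-g,S(u)\rangle$, and (P3), applied to $S(f_1)-g\in\K^+$ and $v-S(u)\in\K^+$, upgrades $S(u)$ to $v$.

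Part (d) is the crux. Write $s=S(f_1)$, $x=s\wedge g+s$, $y=s\vee g+g$, so that the pairing value is $w:=x^+$ (whence $S(f_2)=w$) and the second component is $v_0:=y^+$; note the Riesz identities $x+y=2(s+g)$ and $x\le y$ (because $y-x=2(g-s)^+\ge0$). Again I verify the projection characterization. Membership $z\in C$ reduces to $w\le v_0$, which follows from $x\le y$ and monotonicity of $P_{\K^+}=(\cdot)^+$. For the variational inequality I would first reduce the $\K$-component to the boundary: since $v_0=y^+\ge y\ge2g$ one has $g\le\tfrac12v_0$, so $\langle g-\tfrac12v_0,\,v-S(u)\rangle\le0$ by (P3). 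The decisive input is that $f_1$ and $\tfrac12f_2$ are $\tfrac12w$-paired (because $\langle f_1,\tfrac12f_2\rangle=\tfrac12\langle s,w\rangle$) and $\tfrac12w\le s$ (from $x\le2s$), so a second application of Lemma \ref{abs_difference} gives the exact identity $S(f_1-\tfrac12f_2)=s-\tfrac12w$. Feeding this and (S1) into $\Re\langle f_1-\tfrac12f_2,u\rangle\le\langle s-\tfrac12w,S(u)\rangle$ and using $s-\tfrac12w\le\tfrac12v_0-g$ (equivalently $x^++y^+\ge x+y$) collapses all $u$- and $v$-dependence, leaving the constant inequality $\langle w-2s,w\rangle+\langle v_0-2g,v_0\rangle\le0$.

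Finally I would show this residual inequality is in fact an equality. Using $\langle t,t^+\rangle=\norm{t^+}^2$ (a consequence of the Moreau orthogonality $\langle t^+,t^-\rangle=0$ of Theorem \ref{Moreau}) and the identities $x-2s=-(s-g)^+$, $y-2g=(s-g)^+$, it rewrites as $\langle(s-g)^+,\,v_0-w\rangle=\langle(s-g)^+,\,y^+-x^+\rangle$. This vanishes because $0\le y^+-x^+\le y-x=2(s-g)^-$, so pairing with $(s-g)^+\ge0$ and invoking (P3) together with $\langle(s-g)^+,(s-g)^-\rangle=0$ squeezes the quantity between $0$ and $0$. I expect the main obstacle to be organizational rather than conceptual: finding the substitutions $x,y$ and recognizing that the two invocations of Lemma \ref{abs_difference}—turning $S(f_1-f_2)$ and $S(f_1-\tfrac12f_2)$ into \emph{differences} of moduli—are precisely what lets the $(\cdot)^+$-truncations and the incomparability of $g$ with $s$ be absorbed by the Riesz identities, so that no pointwise case distinction (as in the proof of Lemma 3.2 in \cite{MVV}) is needed.
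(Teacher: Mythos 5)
Your proposal is correct and takes essentially the same route as the paper: parts (a)--(c) are verified exactly as there, and in (d) you likewise check the variational characterization of $P_C$ directly, with Lemma \ref{abs_difference} applied to $f_1-\tfrac{1}{2}f_2$, the Riesz identities $x+y=2(S(f_1)+g)$, $x-2S(f_1)=-(S(f_1)-g)^+$, $y-2g=(S(f_1)-g)^+$ (in your notation $x=S(f_1)\wedge g+S(f_1)$, $y=S(f_1)\vee g+g$), and Moreau orthogonality doing the same work as in the paper's splitting into the terms $I$ and $J$. The only real deviation is the finish of (d): where the paper proves the exact identity $y^+-x^+=y-x=2(g-S(f_1))^+$ via an isotonicity argument, you squeeze $0\leq y^+-x^+\leq y-x=2(S(f_1)-g)^-$ (the upper bound being the Riesz-space subadditivity $(x+(y-x))^+\leq x^++(y-x)^+$, available here by Proposition \ref{isotone_latticial}) against $(S(f_1)-g)^+$, a mildly more economical ending that is equally valid.
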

\begin{proof}
\begin{itemize}
\item[(a)]Since $S$ is positive homogeneous and satisfies the triangle inequality (Lem\-ma \ref{symmetrization_triangle}), it is clear that $C$ is convex. By Lemma \ref{symmetrization_Lipschitz}, $S$ is continuous. Thus, $C$ is closed as the preimages of $\K^+$ under the continuous map
\begin{align*}
\phi\colon \H\oplus\K\lra \K,(u,v)\mapsto v-S(u).
\end{align*}
\item[(b)]First assume that $C$ is invariant under $(W_t)$.\\
Let $f\in \H$ and $t\geq 0$. Then we have $(f,S(f))\in C$, hence 
\begin{align*}
(P_t f, Q_t S(f))=W_t(f,S(f))\in C,
\end{align*}
that is, $S(P_t f)\leq Q_t S(f)$. Thus, $(P_t)$ is dominated by $(Q_t)$.

Conversely assume $(P_t)$ is dominated by $(Q_t)$.\\
Let $(u,v)\in C$. Since $\K^+$ is self-dual, the semigroup $(Q_t)$ leaves $\K^+$ invariant by Lemma \ref{domination_sums}. Thus, $S(u)\leq v$ implies $S(P_t u)\leq Q_t S(u)\leq Q_t v$. Hence, $W_t(u,v)=(P_tu,Q_tv)\in C$.
\item[(c)]Let $f_1\in \H$, $g\in\K^+$ such that $g\leq S(f_1)$ and assume there is an $f_2\in\H$ such that $f_1,f_2$ are $g$-paired. Define 
\begin{align*}
P(f_1,g)=\frac 1 2(f_1+f_2,S(f_1)+g).
\end{align*}
 The projection $(\hat{f_1},\hat g)$ of $(f_1,g)$ onto $C$ is characterized as the unique element in $C$ satisfying
\begin{align*}
\Re\langle (f_1,g)-(\hat{f_1},\hat g),(u,v)-(\hat{f_1},\hat g)\rangle\leq 0
\end{align*}
for all $(u,v)\in C$. We will show that $P(f_1,g)=(\hat f_1,\hat g)$.\\
Since $S((f_1+f_2))\leq S(f_1)+S(f_2)=S(f_1)+g$, $P(f_1,g)$ is an element of $C$. For all $(u,v)\in C$ we have
\begin{align*}
&\Re\langle (f_1,g)-P(f_1,g),(u,v)-P(f_1,g)\rangle\\
={}&\frac 1 4\Re\langle (f_1-f_2,g-S(f_1)),(2u-f_1-f_2,2v-S(f_1)-g)\rangle\\
={}&\frac 1 4\Re(\langle f_1-f_2,2u\rangle-\norm{f_1}^2+\norm{f_2}^2+2\langle g-S(f_1), v\rangle-\norm{g}^2+\norm{S(f_1)}^2)\\
={}&\frac 1 2\Re(\langle f_1-f_2,u\rangle+\langle g-S(f_1),v\rangle)
\end{align*}
Since $S(f_2)=g\leq S(f_1)$, we have $S(f_1-f_2)=S(f_1)-S(f_2)$ by Lemma \ref{abs_difference} and therefore 
\begin{align*}
\abs{\langle f_1-f_2,u\rangle}\leq \langle S(f_1-f_2),S(u)\rangle\leq \langle S(f_1)-S(f_2),v\rangle.
\end{align*}
This implies
\begin{align*}
\frac 1 2\Re(\langle f_1-f_2,u\rangle+\langle g-S(f_1),v\rangle)&\leq \frac 1 2\Re(\langle g-S(f_1),v\rangle+\abs{\langle f_1-f_2,u\rangle})\\
&\leq\frac 1 2\Re(\langle g-S(f_1),v\rangle+\langle S(f_1)-g,v\rangle)\\
&=0.
\end{align*}
Hence, $P(f_1,g)$ is the projection of $(f_1,g)$ on $C$.
\item[(d)]Let $f_1\in\H,g\in\K$ and $f_2\in \H$ such that $f_1,f_2$ are $(S(f_1)\wedge g+S(f_1))^+$-paired. Define
\begin{align*}
P(f_1,g)=\frac 1 2(f_2,(S(f_1)\vee g+g)^+).
\end{align*}
As in (c), we will show $P=P_C$ via the characterization of $P_C$ given above.\\
Since $\K^+$ is an isotone projection cone, $S(f_1)\wedge g+S(f_1)\leq g+S(f_1)\vee g$ implies $(S(f_1)\wedge g+S(f_1))^+\leq (S(f_1)\vee g+g)^+$, hence $P(f_1,g)\in C$.\\
So we have to show that
\begin{align*}
\Re\langle (f_1,g)-P(f_1,g),(u,v)-P(f_1,g)\rangle\leq 0
\end{align*}
for all $(u,v)\in C$.\\
We will evaluate the terms
\begin{align*}
I&=\Re\langle (f_1,g)-P(f_1,g),(u,v)\rangle
\intertext{and}
J&=\langle (f_1,g)-P(f_1,g),-P(f_1,g)\rangle
\end{align*}
separately.\\
Using $\abs{\langle f,\tilde f\rangle}\leq \langle S(f),S(\tilde f)\rangle$ for $f,\tilde f\in\H$, and $S(u)\leq v$, we obtain
\begin{align*}
I&=\Re\langle f_1-\frac 1 2 f_2,u\rangle+\langle g-\frac 1 2(S(f_1)\vee g+g)^+,v\rangle\\
&\leq \langle S(f_1-\frac 1 2f_2),S(u)\rangle+\langle g-\frac 1 2(S(f_1)\vee g+g)^+,v\rangle\\
&\leq \langle S(f_1-\frac 1 2f_2)+g-\frac 1 2(S(f_1)\vee g+g)^+,v\rangle.
\end{align*}
Lemma \ref{abs_difference} implies 
\begin{align*}
S(f_1-\frac 1 2 f_2)=S(f_1)-\frac 12 S(f_2)=S(f_1)-\frac 1 2(S(f_1)\wedge g+S(f_1))^+.
\end{align*}
Thus,
\begin{align*}
I&\leq\langle S(f_1)-\frac 1 2(S(f_1)\wedge g+S(f_1))^++g-\frac 1 2(S(f_1)\vee g+g)^+,v\rangle\\
&\leq \langle S(f_1)+g-\frac 1 2(S(f_1)\wedge g+S(f_1))-\frac 1 2(S(f_1)\vee g+g),v\rangle\\
&=\langle S(f_1)+g-\frac 1 2 (S(f_1)+g+S(f_1)+g),v\rangle\\
&=0,
\end{align*}
where we used $h\leq h^+$ and $h\wedge \tilde h+h\vee \tilde h=h+\tilde h$ for $h,\tilde h\in \K$.\\
Next let us turn to $J$:
\begin{align*}
J{}={}&\langle f_1-\frac 1 2 f_2,-\frac 1 2 f_2\rangle+\langle g-\frac 1 2(S(f_1)\vee g+g)^+,-\frac 1 2(S(f_1)\vee g+g)^+\rangle\\
={}&-\frac 1 2\langle S(f_1),S(f_2)\rangle+\frac 1 4\langle S(f_2),S(f_2)\rangle-\frac 1 2\langle g,(S(f_1)\vee g+g)^+\rangle\\
{}&+\frac 1 4\langle (S(f_1)\vee g+g)^+,(S(f_1)\vee g+g)^+\rangle\\
={}&-\frac 1 2 \langle S(f_1),(S(f_1)\wedge g+S(f_1))^+\rangle+\frac  14 \norm{(S(f_1)\wedge g+S(f_1))^+}^2\\
{}&-\frac 1 2\langle g,(S(f_1)\vee g+g)^+\rangle+\frac 1 4\norm{(S(f_1)\vee g+g)^+}^2.
\end{align*}
Since positive and negative part are orthogonal to each other, we can write
\begin{align*}
\norm{(S(f_1)\wedge g+S(f_1))^+}^2=\langle S(f_1)\wedge g+S(f_1),(S(f_1)\wedge g+S(f_1))^+\rangle
\end{align*}
and likewise for $\norm{(S(f_1)\vee g+g)^+}^2$.\\
Using once again $h\wedge \tilde h+h\vee \tilde h=h+\tilde h$ for $h,\tilde h\in \K$, it follows that
\begin{align*}
4J{}={}&\langle S(f_1)\wedge g -S(f_1),(S(f_1)\wedge g+S(f_1))^+\rangle\\
{}&+\langle S(f_1)\vee g-g,(S(f_1)\vee g+g)^+\rangle\\
={}&\langle g-S(f_1)\vee g,(S(f_1)\wedge g+S(f_1))^+\rangle+\langle S(f_1)\vee g-g,(S(f_1)\vee g+g)^+\rangle\\
={}&\langle S(f_1)\vee g-g,(S(f_1)\vee g+g)^+-(S(f_1)\wedge g+S(f_1))^+\rangle.
\end{align*}
We analyze the factors of the inner product separately.\\
As for the first factor, isotonicity implies that
\begin{align*}
(S(f_1)\vee g+g)^+-(S(f_1)\wedge g+S(f_1))^+\geq (S(f_1)+g)^+-(g+S(f_1))^+=0
\end{align*}
and
\begin{align*}
(S(f_1)\vee g+g)^+-(S(f_1)\wedge g+S(f_1))^+\geq (2g)^+-2S(f_1)\geq 2(g-S(f_1)),
\end{align*}
hence
\begin{align*}
(S(f_1)\vee g+g)^+-(S(f_1)\wedge g+S(f_1))^+\geq 2(g-S(f_1))^+.
\end{align*}
Moreover
\begin{align*}
(S(f_1)\vee g+g)-(S(f_1)\wedge g+S(f_1))&=g-S(f_1)+S(f_1)\vee g-S(f_1)\wedge g\\
&=g-S(f_1)+\abs{g-S(f_1)}\\
&=2(g-S(f_1))^+.
\end{align*}
Let $h,\tilde h\in\K$ such that $\tilde h-h\geq 0,\,\tilde h^+-h^+\geq \tilde h-h$. Then
\begin{align*}
\tilde h^--h^-=(\tilde h^+-h^+)-(\tilde h-h)\geq 0.
\end{align*}
On the other hand, $\tilde h\geq h$ implies by isotonicity $\tilde h^-\leq h^-$. Combining both inequalities yields $\tilde h^-=h^-$ and consequently $\tilde h^+-h^+=\tilde h-h$.\\
Applied to $\tilde h=S(f_1)\vee g+g$ and $h=S(f_1)\wedge g+S(f_1)$ this means that
\begin{align*}
(S(f_1)\vee g+g)^+-(S(f_1)\wedge g+S(f_1))^+=2(g-S(f_1))^+.
\end{align*}
For the other factor in the inner product expression for $4J$ we have:
\begin{align*}
S(f_1)\vee g-g&=\frac 1 2(S(f_1)+g+\abs{S(f_1)-g})-g\\
&=\frac 1 2(S(f_1)-g+\abs{S(f_1)-g})\\
&=(S(f_1)-g)^+\\
&=(g-S(f_1))^-.
\end{align*}
Thus,
\begin{align*}
4J&=\langle S(f_1)\vee g-g,(S(f_1)\vee g+g)^+-(S(f_1)\wedge g+S(f_1))^+\rangle\\
&=2\langle(g-S(f_1))^-,(g-S(f_1))^+\rangle\\
&=0.
\end{align*}
Combining the results for $I$ and $J$ we finally obtain the desired result:
\begin{align*}
\Re\langle (f_1,g)-P(f_1),(u,v)-P(f_1,g)\rangle=I+\Re J\leq 0.&\qedhere
\end{align*}
\qedhere
\end{itemize}
\end{proof}

Of course, the existence of an element $f_2\in\H$ as in (c), (d) is automatically guaranteed  if $S$ is actually a symmetrization. However, for the following corollary we need the above proposition when $S$ is the absolute value on $\K$, which is not necessarily a symmetrization.

\begin{corollary}\label{f_2_unique}
Let $\K^+\subset \K$ be a self-dual positive cone and $S\colon\H\lra\K^+$ a symmetrization. If $f_1\in\H$ and $g\in \K^+$ with $g\leq S(f_1)$, then the element $f_2\in\H$ such that $f_1,f_2$ are $g$-paired is unique.
\end{corollary}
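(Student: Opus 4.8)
The plan is to read off uniqueness of $f_2$ from the uniqueness of the metric projection appearing in Proposition \ref{domination_invariance}(c). First I would check that the hypotheses of that proposition are available: a symmetrization satisfies (S1), so $S$ is in particular an absolute mapping, and $\K^+$ is self-dual by assumption. Parts (a) and (c) of the proposition concern only the set $C=\{(u,v)\in\H\oplus\K\mid S(u)\leq v\}$ and the projection onto it, both of which depend solely on $\K^+$ and $S$; the semigroups $(P_t),(Q_t)$ are immaterial there, so I may invoke these two parts freely (taking, say, the trivial semigroups if a choice is formally required). By part (a), $C$ is a closed convex subset of $\H\oplus\K$, whence the metric projection $P_C(f_1,g)$ of the point $(f_1,g)$ is a single, well-defined element of $\H\oplus\K$.

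Now suppose $f_2,f_2'\in\H$ are both $g$-paired with $f_1$. Since $g\in\K^+$ and $g\leq S(f_1)$ by hypothesis, part (c) of the proposition applies to each partner and yields
\[
\tfrac12\bigl(f_1+f_2,\,S(f_1)+g\bigr)=P_C(f_1,g)=\tfrac12\bigl(f_1+f_2',\,S(f_1)+g\bigr).
\]
Comparing the first components of these two equal elements of $\H\oplus\K$ gives $f_1+f_2=f_1+f_2'$, and hence $f_2=f_2'$. This is the entire argument.

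The point to stress is that there is essentially no obstacle left once Proposition \ref{domination_invariance}(c) is in place: the whole difficulty has been absorbed into the computation of that projection, and all that remains is the observation that (c) returns the \emph{same} projection no matter which $g$-paired partner is substituted, so that uniqueness of the projection onto a closed convex set forces uniqueness of $f_2$. A more hands-on route --- assuming two partners $f_2,f_2'$ and trying to bound $\norm{f_2-f_2'}$ directly via (S1) and Lemma \ref{abs_difference} --- does not close as cleanly, since the inner-product estimates from (S1) point in the unhelpful direction; this is precisely why routing through the projection is the natural strategy.
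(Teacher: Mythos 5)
Your proof is correct and takes essentially the same route as the paper: the paper's own proof is the one-line observation that the claim "follows from Proposition \ref{domination_invariance} (c) since the projection is unique." You have simply spelled out the details the paper leaves implicit (that parts (a) and (c) are independent of the semigroups, and that comparing first components of the two expressions for $P_C(f_1,g)$ forces $f_2=f_2'$).
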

\begin{proof}
This follows from Proposition \ref{domination_invariance} (c) since the projection is unique.
\end{proof}

\begin{corollary}\label{cor_proj_pos_form}
Let $\K^+\subset \K$ be a self-dual isotone projection cone and $\b$ a positive form in $\K$. Let
\begin{align*}
\pi\colon \K^+\oplus\K\lra \K^+\oplus\K^+,\,(h,g)\mapsto \frac 1 2((h\wedge g+h)^+,(h\vee g+g)^+).
\end{align*}
Then $\pi(D(\b)\oplus D(\b))\subset D(\b)\oplus D(\b)$ and
\begin{align*}
(\b\oplus\b)(\pi(h,g),(1-\pi)(h,g))\geq 0
\end{align*}
holds for all $h\in D(\b)^+, g\in D(\b)$.
\end{corollary}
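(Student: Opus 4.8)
The plan is to read $\pi$ as the projection $P_C$ supplied by Proposition \ref{domination_invariance}(d) and then to collapse the asserted inequality into two applications of the defining inequality of a positive form. First I would invoke Proposition \ref{domination_invariance}(d) in the degenerate situation $\H=\K$ and $S=\abs{\cdot}$, which is an absolute mapping by Lemma \ref{absolute_mapping_Riesz} (though, as the preceding remark warns, not a symmetrization). Taking $f_1=h\in\K^+$ we have $S(f_1)=\abs h=h$, and the auxiliary element required by (d) may simply be chosen as $f_2=(h\wedge g+h)^+\in\K^+$: since everything then lies in $\K^+$, the triple equality defining the pairing is automatic. Proposition \ref{domination_invariance}(d) thus identifies $\pi(h,g)=P_C(h,g)$ for $C=\{(u,v)\in\K\oplus\K\mid\abs u\leq v\}$ and, reusing the computations in its proof, records the relations
\begin{align*}
(h\vee g+g)^+-(h\wedge g+h)^+=2(g-h)^+,\qquad (h\vee g+g)^-=(h\wedge g+h)^-.
\end{align*}

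For the domain statement, I would note that by Lemma \ref{positive_form_Riesz} the space $D(\b)$ is stable under $\abs{\cdot}$, hence under $\vee$, $\wedge$ and taking positive parts, because $f\vee g=\tfrac12(f+g+\abs{f-g})$ and $f^+=\tfrac12(f+\abs f)$. Consequently $(h\wedge g+h)^+$ and $(h\vee g+g)^+$ belong to $D(\b)$ whenever $h\in D(\b)^+$ and $g\in D(\b)$, which gives $\pi(D(\b)^+\oplus D(\b))\subset D(\b)\oplus D(\b)$; since $D(\b)\oplus D(\b)$ is a subspace, $(1-\pi)(h,g)$ lies there as well, so the form expression is well defined.

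For the inequality, write $A,B$ for the components of $\pi(h,g)$, so $2A=(h\wedge g+h)^+$ and $2B=(h\vee g+g)^+$. Using $P^+=P+P^-$, the two relations above and $h\wedge g+h\vee g=h+g$, I would record the abbreviations $A-B=-(g-h)^+$, $A+B=(h+g)+n$ and $n:=(h\wedge g+h)^-=(h+g)^-$, the last identity coming from $h\wedge g+h=(h+g)-(g-h)^+$ together with $0\leq(g-h)^+\leq(h+g)^+$; hence $A+B=(h+g)^+$. These give $h-A=\tfrac12((h-g)^+-n)$ and $g-B=-\tfrac12((h-g)^++n)$, and expanding the bilinear form yields
\begin{align*}
(\b\oplus\b)(\pi(h,g),(1-\pi)(h,g))&=\b(A,h-A)+\b(B,g-B)\\
&=\tfrac12\b(A-B,(h-g)^+)-\tfrac12\b(A+B,n)\\
&=-\tfrac12\b((h-g)^+,(h-g)^-)-\tfrac12\b((h+g)^+,(h+g)^-).
\end{align*}
Both terms are nonpositive by the defining inequality $\b(w^+,w^-)\leq0$ of a positive form, applied to $w=h-g$ and $w=h+g$, so the whole expression is nonnegative, as required.

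The main obstacle is the lattice bookkeeping in the third step: correctly simplifying the positive and negative parts of $h\wedge g+h$ and $h\vee g+g$, and in particular establishing $n=(h+g)^-$ via the disjointness of $(g-h)^+$ and $(h+g)^-$, which rests on the isotone projection property of $\K^+$. Once these reductions are secured the positivity of $\b$ closes the argument at once, and no estimate beyond $\b(w^+,w^-)\leq0$ is needed.
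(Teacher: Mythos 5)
Your proof is correct, but for the heart of the corollary it takes a genuinely different route from the paper. The paper argues entirely through semigroups: positivity of $\b$ gives, via Ouhabaz' criterion (Proposition \ref{invariance_ouhabaz}), that the associated semigroup $(P_t)$ preserves $\K^+$; by Example \ref{ex_positive_semigroup_dominating} it is then dominated by itself, so Proposition \ref{domination_invariance}(b) makes $C=\{(u,v)\mid\abs u\leq v\}$ invariant under $(P_t\oplus P_t)$, and a second application of Ouhabaz' criterion, now to the form $\b\oplus\b$ and the projection $P_C$, delivers both the domain invariance and the inequality; Proposition \ref{domination_invariance}(d) is used only to identify $P_C=\pi$ (exactly as in your first step, with the same observation that positive elements are automatically paired). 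You instead bypass the semigroup machinery completely: the domain statement follows from the sublattice property of $D(\b)$ (Lemmas \ref{positive_form_Riesz} and \ref{pos_form_lattice}), and the inequality is reduced, by explicit lattice bookkeeping ($A-B=-(g-h)^+$, $A+B=(h+g)^+$, with the key identity $(h\wedge g+h)^-=(h+g)^-$ justified through isotonicity, Moreau's Theorem \ref{Moreau} and property (P3) of the cone), to the two instances $\b((h-g)^+,(h-g)^-)\leq 0$ and $\b((h+g)^+,(h+g)^-)\leq 0$ of the defining inequality of a positive form. I verified your algebra; it is sound, and the Moreau-based disjointness argument you sketch does close the one delicate step. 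What the paper's route buys is brevity given the already-developed invariance framework and freedom from lattice computations; what your route buys is a self-contained, purely order- and form-theoretic proof that exposes the corollary as nothing more than two applications of positivity of $\b$ — note that your citation of Proposition \ref{domination_invariance}(d) then serves only to identify $\pi$ as a projection, and the two relations you quote from inside its proof are ones you effectively re-derive anyway, so your argument does not really depend on reaching into that proof.
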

\begin{proof}
Let $(P_t)$ be the semigroup associated with $\b$. By Proposition \ref{invariance_ouhabaz}, $(P_t)$ preserves $\K^+$ and in Example \ref{ex_positive_semigroup_dominating} it was shown that $(P_t)$ is dominated by itself.\\
By Proposition \ref{domination_invariance},
\begin{align*}
C=\{(u,v)\in\K\oplus\K\mid \abs{u}\leq v\}
\end{align*}
is invariant under $(P_t\oplus P_t)$.\\
Since two positive elements of $\K$ are obviously paired (with respect to $\abs\cdot$), we can apply Proposition \ref{domination_invariance} to deduce that the projection onto $C$ satisfies
\begin{align*}
P(h,g)=\frac 1 2 ((h\wedge g+h)^+,(h\vee g+g)^+)=\pi(h,g)
\end{align*}
for all $g\in\K, h\in \K^+$.\\
One more application of Proposition \ref{invariance_ouhabaz} yields $\pi(D(\b)\oplus D(\b))\subset D(\b)\oplus D(\b)$ and
\begin{align*}
(\b\oplus\b)(\pi(h,g),(1-\pi)(h,g))\geq 0
\end{align*}
for all $h\in D(\b)^+, g\in D(\b)$.
\end{proof}

The following two definitions, describing the relation of subspaces under an order structure, will be used in the characterization of domination of semigroups. What we call an ideal is a slight generalization of the notion that is often also found under the name order ideal (the common use is restricted to the case of the absolute value mapping as described below). The second notion, generalized ideal, was originally coined by Ouhabaz (cf. \cite{Ouh96}) under the name ideal, but that collides with the usual terminology in order theory, so we have adapted the usage of \cite{MVV}.

\begin{definition}[Ideal]
Let $\K^+\subset \K$ be a positive cone, $S\colon \H\lra\K^+$ an absolute mapping, and $U,V\subset \H$ subspaces. Then $U$ is called an ideal of $V$ if for $u\in U,v\in V, S(v)\leq S(u)$ implies $v\in U$.
\end{definition}
If $\K^+\subset\K$ is a self-dual isotone projection cone, ideals in $\K$ are understood with respect to the absolute mapping $\abs\cdot\colon\K\lra\K^+$ if not otherwise stated.

\begin{definition}[Generalized ideal]
Let $\K^+\subset\K$ be a positive cone and $S\colon\H\lra\K^+$ a symmetrization. A subspace $U\subset\H$ is called generalized ideal of the subspace $V\subset\K$ if the following properties hold:
\begin{itemize}
\item[(I1)]$S(f)\in V$ for all $f\in U$,
\item[(I2)]For all $f_1\in U$ and $g\in V^+$ such that $g\leq S(f_1)$ there is an $f_2\in U$ such that $f_1,f_2$ are $g$-paired.
\end{itemize}
\end{definition}

This notion is obviously closely related to that of a symmetrization U. However, notice that contrary to the definition of a symmetrization, we only demand the existence of $f_2\in U$ if $g\leq S(f_1)$ here, so that $S$ does not necessarily restrict to a symmetrization $U\lra V^+$.

\begin{remark}
\begin{itemize}
\item Let $U$ be a generalized ideal of $V$. If $f_1\in U$, $g\in V$ such that $g\leq S(f_1)$, there is only one $f_2\in \H$ such that $f_1,f_2$ are $g$-paired by Corollary \ref{f_2_unique}. Then condition (I2) implies that $f_2\in U$.
\item Despite the terminology, not every generalized ideal is an ideal (even in situations when both notions would make sense). As indicated above, this terminology arose due to historical reasons. However, there are some connections between ideals and generalized ideals as discussed for example in \cite{MVV}, Proposition 3.6 and Corollary 3.7.
\end{itemize}
\end{remark}

\begin{example}
Let $X$ be a topological space, $m$ a Borel measure on $X$ and $E$ a Hermitian vector bundle over $X$. Then the subspace $U\subset L^2(X,m;E)$ is a generalized ideal of the subspace $V\subset L^2(X,m)$ if and only if
\begin{itemize}
\item $u\in U$ implies $\abs{u}\in V$,
\item $u\in U, v\in V^+, v\leq \abs{u}$ implies $v\sgn u\in U$.
\end{itemize}
\end{example}
Since $v\leq \abs{u}$ in the second condition, it is irrelevant which value $\sgn u$ has at the zeros of $u$, and we can stick to the usual convention $\sgn u(x)=0$ if $u(x)=0$ instead of taking $\sgn_{\xi}$ from Example \ref{ex_sym_L2}.

\begin{definition}
Let $\K^+\subset \K$ be a positive cone, $S\colon \H\lra \K^+$ a symmetrization, and $\a$ (resp. $\b$) a closed form on $\H$ (resp. $\K$). Then $\a$ is said to be dominated by $\b$ if $D(\a)$ is a generalized ideal of $D(\b)$ and
\begin{align*}
\Re\a(f_1,f_2)\geq \b(S(f_1),S(f_2))
\end{align*}
holds for all $f_1,f_2\in D(\a)$ that are $S(f_2)$-paired.
\end{definition}

Now we can finally give the characterization of domination of semigroups in terms of the associated forms. For comments on the history of this theorem as well as on the proof, see the remarks below.

\begin{theorem}\label{thm_char_domination}
Let $\K^+\subset\K$ be a self-dual positive cone, $S\colon\H\lra\K^+$ a symmetrization, $A$ (resp. $B$) a self-adjoint operator on $\H$ (resp. $\K)$ with lower bound $-\lambda$, and $\a$ (resp. $\b)$ the associated form. Then the following assertions are equivalent:
\begin{itemize}
\item[(i)]The semigroup $(e^{-tA})_{t\geq 0}$ is dominated by $(e^{-tB})_{t\geq 0}$.
\item[(ii)]The resolvent $((A+\alpha)^{-1})_{\alpha>\lambda}$ is dominated by $((B+\alpha)^{-1})_{\alpha>\lambda}$.
\end{itemize}
Both assertions imply
\begin{itemize}
\item[(iii)]The form $\a$ is dominated by $\b$.

\end{itemize}
If $\K^+$ is a self-dual isotone projection cone, the assertions (i), (ii) and (iii) are equivalent.
\end{theorem}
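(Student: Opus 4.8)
**The plan is to prove the equivalences by passing through the abstract criterion of Lemma \ref{abs_char_domination} for bounded operators, using the spectral calculus to connect semigroups and resolvents, and using Ouhabaz-type invariance (Proposition \ref{domination_invariance}) for the form characterization.**

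First I would establish (i)$\iff$(ii). After a harmless shift replacing $A,B$ by $A+\lambda,B+\lambda$ (which preserves domination, since $e^{-t(A+\lambda)}=e^{-t\lambda}e^{-tA}$ and the rescaling by the positive scalar $e^{-t\lambda}$ is compatible with condition (iii) of Lemma \ref{abs_char_domination}), I may assume both operators are nonnegative. The two families are then linked by the Laplace transform $(A+\alpha)^{-1}=\int_0^\infty e^{-\alpha t}e^{-tA}\,dt$ and, conversely, by the exponential formula $e^{-tA}=\lim_{n\to\infty}(1+\tfrac t n A)^{-n}=\lim_{n\to\infty}(\tfrac n t)^n(\tfrac n t+A)^{-n}$. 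For the direction (i)$\implies$(ii) I would test condition (iii) of Lemma \ref{abs_char_domination}: for $f_1,f_2\in\H$,
\begin{align*}
\abs{\langle (A+\alpha)^{-1}f_1,f_2\rangle}&\leq\int_0^\infty e^{-\alpha t}\abs{\langle e^{-tA}f_1,f_2\rangle}\,dt\\
&\leq\int_0^\infty e^{-\alpha t}\langle e^{-tB}S(f_1),S(f_2)\rangle\,dt=\langle (B+\alpha)^{-1}S(f_1),S(f_2)\rangle,
\end{align*}
where pulling the inner product inside the integral is justified by continuity of $t\mapsto e^{-tA}f_1$ and dominated convergence. For (ii)$\implies$(i), domination is preserved under the products and positive-scalar multiples appearing in $(\tfrac n t)^n(\tfrac n t+A)^{-n}$ by Lemma \ref{domination_sums} (the powers of the resolvent of $B$ preserve $\K^+$ since $B\geq 0$), and passing to the limit is legitimate because condition (iii) is a closed inequality stable under the strong limit defining $e^{-tA}$.

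Next I would show (ii)$\implies$(iii). Writing $\alpha(A+\alpha)^{-1}$ as the bounded approximants of the identity, the form is recovered via $\a(f)=\lim_{\alpha\to\infty}\alpha\langle f-\alpha(A+\alpha)^{-1}f,f\rangle$ on its domain, and likewise for $\b$. The generalized-ideal property (I1),(I2) of $D(\a)$ inside $D(\b)$ would be extracted from the domination inequalities for the resolvents: the bound $S(\alpha(A+\alpha)^{-1}f)\leq\alpha(B+\alpha)^{-1}S(f)$ (from Lemma \ref{abs_char_domination}(iv), available since $\K^+$ is self-dual) controls $S$ on the range of the approximate identity and, after passing to the limit through the form-norm convergence, pins down membership in $D(\b)$ and the pairing relation; the form inequality $\Re\a(f_1,f_2)\geq\b(S(f_1),S(f_2))$ for $S(f_2)$-paired elements then follows from rearranging the resolvent inequality and letting $\alpha\to\infty$. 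This is the most delicate bookkeeping, since one must check that the $g$-pairing survives the limit, which is exactly what the uniqueness in Corollary \ref{f_2_unique} guarantees.

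The main obstacle, and the reason for the extra hypothesis, is the implication (iii)$\implies$(i) under the assumption that $\K^+$ is a self-dual isotone projection cone. Here the strategy is to invoke Proposition \ref{domination_invariance}(b): domination of $(e^{-tA})$ by $(e^{-tB})$ is equivalent to invariance of $C=\{(u,v)\in\H\oplus\K\mid S(u)\leq v\}$ under the product semigroup $W_t=e^{-tA}\oplus e^{-tB}$. By Ouhabaz' invariance criterion (Proposition \ref{invariance_ouhabaz}), this invariance is in turn equivalent to a form condition on $\a\oplus\b$ relative to the projection $P_C$. The isotone-projection hypothesis is precisely what makes $P_C$ explicitly computable by Proposition \ref{domination_invariance}(d), so that the required inequality $\Re(\a\oplus\b)(P_C w,(1-P_C)w)\geq 0$ can be reduced, via the lattice identities and Corollary \ref{cor_proj_pos_form}, to the hypothesized form domination together with the positivity of $\b$. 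The hard part will be verifying that the algebraic manipulation of $P_C$ in terms of $\vee,\wedge$ and the pairing $f_2$ matches the form-domination inequality term by term; this is where all the preparatory lemmas on symmetrizations (especially Lemma \ref{abs_difference}) and the computation of $J$ in Proposition \ref{domination_invariance}(d) are brought to bear.
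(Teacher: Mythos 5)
Your treatment of (i)$\iff$(ii) and of (iii)$\implies$(i) matches the paper's proof in all essentials: Laplace transform and the exponential formula plus Lemma \ref{domination_sums} give the first equivalence (your variant of (i)$\implies$(ii), testing the scalar condition (iii) of Lemma \ref{abs_char_domination} instead of pushing $S$ through the integral, is fine and if anything cleaner than the paper's Riemann-sum argument), and your plan for (iii)$\implies$(i) --- Proposition \ref{domination_invariance}(b),(d), Ouhabaz's criterion, Corollary \ref{cor_proj_pos_form}, Lemma \ref{abs_difference} --- is exactly the paper's argument. One small slip there: in (ii)$\implies$(i) the resolvent of $B$ preserves $\K^+$ because it \emph{dominates} the resolvent of $A$ and $\K^+$ is self-dual (first half of Lemma \ref{domination_sums}(b)), not ``since $B\geq 0$''; spectral positivity of $B$ has nothing to do with cone preservation.

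The genuine gap is your route to (iii). You propose to get it from (ii) by writing $\a$ as a limit of resolvent approximants and ``passing to the limit through the form-norm convergence''. No mechanism is given for the crucial half of the generalized-ideal property: for $f_1\in D(\a)$ and $g\in D(\b)^+$ with $g\leq S(f_1)$, the $g$-paired element $f_2$ exists in $\H$ by (S2) and is unique by Corollary \ref{f_2_unique}, but you must show $f_2\in D(\a)$, and domination inequalities alone point the wrong way for this. Indeed, to bound $\a(f_2)=\lim_{t\downto 0}\frac 1t\Re\langle f_2-e^{-tA}f_2,f_2\rangle$ you need a \emph{lower} bound on $\Re\langle e^{-tA}f_2,f_2\rangle$ of the form $\norm{f_2}^2-O(t)$, whereas domination only bounds the modulus from above,
\begin{align*}
\abs{\langle e^{-tA}f_2,f_2\rangle}\leq \langle e^{-tB}S(f_2),S(f_2)\rangle=\langle e^{-tB}g,g\rangle,
\end{align*}
which via $\Re z\geq -\abs{z}$ yields only $\frac 1t\Re\langle f_2-e^{-tA}f_2,f_2\rangle\leq \frac 1t\bigl(\norm{g}^2+\langle e^{-tB}g,g\rangle\bigr)$, a quantity that blows up as $t\downto 0$. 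The paper resolves this by deriving (iii) from (i) through the same invariance machinery you reserve for the converse: invariance of $C$ under $W_t=e^{-tA}\oplus e^{-tB}$ (equivalent to (i) by Proposition \ref{domination_invariance}(b)) together with Ouhabaz's criterion gives $P_C D(\tau)\subset D(\tau)$ for $\tau=\a\oplus\b$, and the explicit formula $P_C(f_1,g)=\frac 12(f_1+f_2,S(f_1)+g)$ of Proposition \ref{domination_invariance}(c) then yields $f_1+f_2\in D(\a)$, hence $f_2\in D(\a)$, and $S(f_1)\in D(\b)$ in one stroke. It is the finiteness of $\a(f_1+f_2)$ --- not of $\a(f_2)$ alone --- that the projection controls, and this is precisely what a limiting argument on resolvent approximants cannot see; the form inequality $\Re\a(f_1,f_2)\geq\b(S(f_1),S(f_2))$ is afterwards a short computation with semigroup difference quotients. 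So replace your (ii)$\implies$(iii) step by this projection argument (or, if you want to enter through resolvents, apply Proposition \ref{domination_invariance}(b) to the resolvents and use the equivalence (ii)$\iff$(iii) of Proposition \ref{invariance_ouhabaz}); either way the projection formula, not form-norm limits, is what delivers the generalized ideal property.
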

\begin{proof}
(i)$\implies$(ii): Let $f\in\H,\alpha>\lambda$. The resolvent is given as the Laplace transform of the semigroup:
\begin{align*}
(A+\alpha)^{-1}f=\int_0^\infty e^{-t\alpha}e^{-tA}f\,dt
\end{align*}
and likewise for $B$.\\
Then we have for all $g\in\K^+$:
\begin{align*}
\langle S((A+\alpha)^{-1}f),g\rangle&=\left\langle S\left(\int_0^\infty e^{-t\alpha} e^{-tA}f\,dt\right),g\right\rangle\\
&\leq\left\langle \int_0^\infty e^{-t\alpha}S(e^{-tA} f)\,dt,g\right\rangle\\
&=\int_0^\infty e^{-t\alpha} \langle S(e^{-tA} f),g\rangle\,dt\\
&\leq \int_0^\infty e^{-t\alpha} \langle e^{-tB} Sf,g\rangle\,dt\\
&=\left\langle \int_0^\infty e^{-t\alpha} e^{-tB} Sf\,dt,g\right\rangle\\
&=\langle (B+\alpha)^{-1}Sf,g\rangle.
\end{align*}
Observe that the first inequality holds due to homogeneity and the triangle inequality for finite Riemann sums. By continuity of $S$, it follows for the integral.\\
Hence, $((\alpha+A)^{-1})_{\alpha>\lambda}$ is dominated by $((\alpha+B)^{-1})_{\alpha>\lambda}$.

(ii)$\implies$(i): Let $f\in \H, t\geq 0$. The semigroup can be derived from the resolvent via 
\begin{align*}
e^{-tA}f=\lim_{n\to\infty}\left(\frac n t\right)^n\left(A+\frac n t\right)^{-n}f
\end{align*}
and likewise for $B$.\\
Since $(e^{-tB})$ is positivity preserving, $(B+\alpha)^{-1}$ preserves $\K^+$ for all $\alpha>\lambda$ by \ref{invariance_ouhabaz}. By Lemma \ref{domination_sums}, $((A+\alpha)^{-n})_{\alpha>\lambda}$ is dominated by $((B+\alpha)^{-n})_{\alpha>\lambda}$ for all $n\in\IN$.\\
Thus, using the elementary properties of $S$, we have for all $g\in\K^+$:
\begin{align*}
\langle S(e^{-tA} f),g\rangle&=\lim_{n\to\infty}\left(\frac n t\right)^n\left\langle S\left(\left(A+\frac n t\right)^{-n}f\right),g\right\rangle\\
&\leq \lim_{n\to\infty}\left(\frac n t\right)^n\left\langle \left(B+\frac n t\right)^{-n}S(f),g\right\rangle\\
&=\langle e^{-tB} S(f),g\rangle.
\end{align*}
Hence, $(e^{-tA})$ is dominated by $(e^{-tB})$.\\
Next we do some preparatory work for (i)$\implies$(iii) and (iii)$\implies$(i) in the isotone case.\\
Define $C=\{(u,v)\in\H\oplus\K\mid S(u)\leq v\}$ and $W_t=e^{-tA}\oplus e^{-tB}\colon\H\oplus \K\lra\H\oplus \K$ for $t\geq 0$.\\
By Proposition \ref{domination_invariance} (b), (i) is equivalent to $W_t C\subset C$ for all $t\geq 0$.\\
The form $\tau$ associated to $(W_t)$ is given by
\begin{align*}
D(\tau)&=\{(u,v)\in\H\oplus\K\mid \lim_{t\downto 0}\frac 1 t\langle (u,v)-W_t(u,v),(u,v)\rangle<\infty\}\\
&=\{(u,v)\in\H\oplus\K\mid \lim_{t\downto 0}\frac 1 t(\langle u-e^{-tA}u,u\rangle+\langle v-e^{-tB}v,v\rangle)<\infty\}\\
&=D(\a)\oplus D(\b),\\
\tau((u,v))&=\lim_{t\downto 0}\frac 1 t(\langle u-e^{-tA}u,u\rangle+\langle v-e^{-tB}v,v\rangle)\\
&=\a(u)+\b(v).
\end{align*}
By Proposition \ref{invariance_ouhabaz}, $C$ is invariant under $(W_t)$ if and only if $P_C D(\tau)\subset D(\tau)$ and $\Re \tau(P_C(f,g),(f,g)-P_C(f,g))\geq 0$ for all $(f,g)\in D(\a)\oplus D(\b)$.

(i)$\implies$(iii): By Proposition \ref{domination_invariance} (b), the projection $P_C$ satisfies
\begin{align*}
P_C(f_1,g)=\frac 1 2(f_1+f_2,S(f_1)+g)
\end{align*}
for all $f_1,f_2\in\H$, $g\in\K^+$ such that $f_1,f_2$ are $g$-paired and $g\leq S(f_1)$.\\
Now assume additionally that $f_1\in D(\a),\,g\in D(\b)$. If $C$ is invariant under $(W_t)$, then 
\begin{align*}
P_C(f_1,g)=\frac 1 2(f_1+f_2,S(f_1)+g)\in D(\a)\oplus D(\b),
\end{align*}
hence $f_2\in D(\a)$ and $S(f_1)\in D(\b)$. Thus, $D(\a)$ is a generalized ideal of $D(\b)$.\\
Let $f_1,f_2\in D(\a)$ be $S(f_2)$-paired. Then $S(f_1),S(f_2)\in D(\b)$ and 
\begin{align*}
\Re \a(f_1,f_2)&=\lim_{t\downto 0}\frac 1 t\Re\langle f_1-e^{-tA}f_1,f_2\rangle\\
&=\lim_{t\downto 0}\frac 1 t(\langle S(f_1),S(f_2)\rangle-\Re\langle e^{-tA}f_1,f_2\rangle)\\
&\geq\lim_{t\downto 0}\frac 1 t (\langle S(f_1),S(f_2)\rangle-\langle e^{-tB}S(f_1),S(f_2)\rangle)\\
&=\lim_{t\downto 0}\frac 1 t\langle Sf_1-e^{-tB}S(f_1),S(f_2)\rangle\\
&=\b(S(f_1),S(f_2)).
\end{align*}
For the remainder of the proof we assume that $\K^+$ is a self-dual isotone projection cone.

(iii)$\implies$(i): First we show that $P_C D(\tau)\subset D(\tau)$. For that purpose let $(f_1,g)\in D(\tau)$. By Proposition \ref{domination_invariance} (d), the projection $P_C(f_1,g)$ is given by
\begin{align*}
P_C(f_1,g)=\frac 1 2(f_2, (S(f_1)\vee g+g)^+),
\end{align*}
where $f_2\in \H$ such that $f_1,f_2$ are $(S(f_1)\wedge g+S(f_1))^+$-paired.\\
By isotonicity, 
\begin{align*}
S(f_2)=(S(f_1)\wedge g+S(f_1))^+\leq (2S(f_1))^+=2 S(f_1).
\end{align*} 
Since $D(\a)$ is generalized ideal of $D(\b)$, this inequality implies $f_2\in D(\a)$.\\
Furthermore, $S(f_1)\in D(\b)$ once again since $D(\a)$ is a generalized ideal in $D(\b)$ and therefore $\frac 1 2 (S(f_1)\vee g+g)^+\in D(\b)$ since $D(\b)$ is a sublattice of $\K$ by Lemma \ref{pos_form_lattice}. Hence, $P_C(f_1,g)\in D(\tau)$.\\
Let $g_2=\frac 1 2(S(f_1)\vee g+g)^+$. By (iii) and Lemma \ref{abs_difference} we have
\begin{align*}
\Re \tau(P_C(f_1,g),(1-P_C)(f_1,g))&=\Re\a\left(\frac 1 2 f_2,f_1-\frac 1 2 f_2\right)+\b\left(g_2,g- g_2\right)\\
&\geq \b\left(\frac 1 2 S(f_2), S(f_1-\frac 1 2 f_2)\right)+\b(g_2,g-g_2)\\
&= \b\left(\frac 1 2 S(f_2),S(f_1)-\frac 1 2S(f_2)\right)+\b\left( g_2,g- g_2\right)\\
&=(\b\oplus\b)\left(\frac 1 2(S(f_2),g_2),(S(f_1),g)-(S(f_2),g_2)\right).
\end{align*}
Now Corollary \ref{cor_proj_pos_form} implies $(S(f_2),g_2)=\pi(S(f_1),g)$ and
\begin{align*}
(\b\oplus\b)(\pi(S(f_1),g),(1-\pi)(S(f_1),g))\geq 0.
\end{align*}
Therefore,
\begin{align*}
\Re\tau(P_C(f_1,g),(1-P_C)(f_1,g))\geq 0.
\end{align*}
In the light of our preparatory work this means that $(e^{-tA})$ is dominated by $(e^{-tB})$. 
\end{proof}

\begin{remark}
\begin{itemize}
\item A first version of this theorem was given independently by Simon (\cite{Sim77}, Thm. 5.1) for operators on $L^2$-spaces and by Hess, Schrader, Uhlenbrock (\cite{HSU}) in the setting of symmetrizations between abstract Hilbert spaces. Both did not offer a characterization purely in terms of forms, but the following inequality:
\begin{align*}
\Re \langle \overline{g\sgn f} ,Af\rangle\geq \b(\abs{f},g)
\end{align*}
for $f\in D(A),\,g\in D(\b)^+$.
\item The characterization in terms of the associated forms was first given by Ouhabaz (cf. \cite{Ouh96}) for semigroups on $L^2$-spaces (vector-valued for $\H$).
\item Although phrased in the terminology of \cite{HSU}, our proof is closer related to those in \cite{Ouh96, MVV} in that we take Proposition \ref{invariance_ouhabaz} as a main ingredient. This approach has also the advantage that (iii) can be phrased in terms of forms and does not involve the domain of the generator. This is not only essential for our application, but also seemed conceptually better fitting.
\item The work of Manavi, Vogt and Voigt (\cite{MVV}) is concerned with some further ramifications of this theorem in the case of $L^2$-spaces, allowing not necessarily densely defined, sectorial forms and giving criteria on cores. We believe that those carry over to our more abstract setting, however, that was not the focus of this article and we will not need such criteria later.
\item If $\a$ is dominated by $\b$, $f_1\in D(\a)$ and $g\in D(\b)_+$ with $g\leq S(f_1)$, there is an $f_2\in D(\a)$ such that $f_1, f_2$ are $g$-paired since $D(\a)$ is a generalized ideal of $D(\b)$. The condition $g\leq S(f_1)$ cannot be dropped, as the following example shows:\\
Let $\E$ be the standard energy form on $\IR^n$, that is,
\begin{align*}
D(\E)=H^1(\IR^n), \E(u)=\int_{\IR^n}\abs{\nabla u}^2\,dx.
\end{align*}
Then $\E$ is positive, hence dominated by itself.\\
Let $f_1\in C_c^\infty(\IR^n)$ be such that $\supp f_1\subset[-1,1]$ and $f_1|_{[0,1]}\geq 0,\,f_1|_{[-1,0]}\leq 0$. Let $g\in C_c^\infty(\IR^n)$, $g\geq 0$, $g|_{[-2,2]}=1$. If $f_1$ and $f_2$ are $g$-paired, then $f_2(x)=g(x)\sgn f_1(x)=\sgn (x)$ for all $x\in [-1,1]$. Hence, $f_2\notin H^1(\IR^n)$.
\item It is amazing that a stronger assumption on $\K^+$ is needed for the implication (iii)$\implies$(i) while the theorem in \cite{HSU} works without further assumption on $\K^+$. However, it is obvious that our proof strategy strongly relies on the fact that $\K^+$ is an isotone projection cone and $\K$ therefore a Riesz space. The proof in \cite{HSU} also does not carry over to our situation as far as we can see.
\end{itemize}
\end{remark}

For convenience we reformulate the above theorem for the case of $L^2$-spaces.
\begin{corollary}\label{cor_domination}
Let $X$ be a topological space, $m$ a Borel measure on $X$ and $E$ a Hermitian vector bundle over $X$, $A$ (resp. $B$) a lower semibounded, self-adjoint operator on $L^2(X,m;E)$ (resp. $L^2(X,m)$) and $\a$ (resp. $\b$) the associated form. Then the following are equivalent:
\begin{itemize}
\item[(i)]The semigroup $(e^{-tA})$ is dominated by $(e^{-tB})$.
\item[(ii)]The domain $D(\a)$ is a generalized ideal of $D(\b)$ and 
\begin{align*}
\Re\a(u,\tilde u)\geq \b(\abs{u},\abs{\tilde u})
\end{align*}
holds for all $u,\tilde u\in D(\a)$ satisfying $\langle u(x), \tilde u(x)\rangle_x=\abs{u(x)}_x\abs{\tilde u(x)}_x$ for almost all $x\in X$.
\end{itemize}
\end{corollary}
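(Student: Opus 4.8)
The plan is to deduce this corollary directly from Theorem \ref{thm_char_domination} by specializing the abstract data and then unfolding the pairing condition into the pointwise statement in (ii). First I would record the structural facts that make the theorem applicable: the cone $L^2(X,m)^+$ is a self-dual isotone projection cone, and the pointwise modulus $S\colon L^2(X,m;E)\lra L^2(X,m)^+$, $(Sf)(x)=\abs{f(x)}_x$, is a symmetrization (as in the examples following Lemma \ref{absolute_mapping_Riesz}). Since $\K^+=L^2(X,m)^+$ is a self-dual isotone projection cone, Theorem \ref{thm_char_domination} gives that domination of the semigroups, (i), is equivalent to ``$\a$ is dominated by $\b$'', that is, to the statement that $D(\a)$ is a generalized ideal of $D(\b)$ and $\Re\a(f_1,f_2)\geq\b(S(f_1),S(f_2))$ for all $f_1,f_2\in D(\a)$ that are $S(f_2)$-paired. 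The generalized-ideal clause is already literally the first half of (ii), so the only remaining task is to identify the second half.

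The heart of the argument is the elementary equivalence, for $u,\tilde u\in L^2(X,m;E)$: the elements $u$ and $\tilde u$ are $S(\tilde u)$-paired if and only if $\langle u(x),\tilde u(x)\rangle_x=\abs{u(x)}_x\abs{\tilde u(x)}_x$ for $m$-almost every $x$. For the ``if'' direction I would simply integrate the pointwise identity to obtain $\langle u,\tilde u\rangle=\langle S(u),S(\tilde u)\rangle$, which is exactly the pairing condition since $S(\tilde u)=\abs{\tilde u}$. For the converse, being $S(\tilde u)$-paired means $\langle u,\tilde u\rangle=\int\abs{u(x)}_x\abs{\tilde u(x)}_x\,dm$, a real nonnegative number; taking real parts and invoking the fibrewise Cauchy--Schwarz inequality $\Re\langle u(x),\tilde u(x)\rangle_x\leq\abs{u(x)}_x\abs{\tilde u(x)}_x$, the equality of the two integrals forces pointwise equality of the integrands almost everywhere. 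Together with $\abs{\langle u(x),\tilde u(x)\rangle_x}\leq\abs{u(x)}_x\abs{\tilde u(x)}_x$, this pins $\langle u(x),\tilde u(x)\rangle_x$ to the nonnegative real value $\abs{u(x)}_x\abs{\tilde u(x)}_x$ almost everywhere, as desired.

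With this equivalence in hand the two quantifiers range over the same set of pairs, and since $\b(S(u),S(\tilde u))=\b(\abs{u},\abs{\tilde u})$, the inequality $\Re\a(f_1,f_2)\geq\b(S(f_1),S(f_2))$ becomes precisely $\Re\a(u,\tilde u)\geq\b(\abs{u},\abs{\tilde u})$. Hence ``$\a$ is dominated by $\b$'' coincides with condition (ii), completing the equivalence with (i). The only delicate point I anticipate is this passage from the scalar equality of inner products to the pointwise almost-everywhere identity: one must use both the real-part comparison and the modulus comparison of the fibre inner product, so as to conclude not merely that the real parts agree but that the fibre inner product is genuinely real and nonnegative almost everywhere.
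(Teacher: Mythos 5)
Your proposal is correct and follows essentially the same route as the paper: specialize Theorem \ref{thm_char_domination} to the self-dual isotone projection cone $L^2(X,m)^+$ with the pointwise modulus as symmetrization, and then show that the abstract pairing condition is equivalent to the pointwise identity $\langle u(x),\tilde u(x)\rangle_x=\abs{u(x)}_x\abs{\tilde u(x)}_x$ almost everywhere, integrating in one direction and using Cauchy--Schwarz on the vanishing integral in the other. In fact your treatment of the converse is slightly more careful than the paper's (which calls the possibly complex-valued integrand ``positive''), since you explicitly pass to real parts before applying the fibrewise Cauchy--Schwarz inequality and then use the modulus bound to conclude the fibre inner product is real and nonnegative.
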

\begin{proof}
It only remains to prove that $u,\tilde u\in L^2(X,m;E)$ are paired if and only if $\langle u(x),\tilde u(x)\rangle_x=\abs{u(x)}_x\abs{\tilde u(x)}_x$ holds for almost all $x\in X$.\\
If $\langle u(x),\tilde u(x)\rangle_x=\abs{u(x)}_x\abs{\tilde u(x)}_x$ holds for almost all $x\in X$, then
\begin{align*}
\langle u,\tilde u\rangle_{L^2(X,m;E)}&=\int_X\langle u(x),\tilde u(x)\rangle_x\,dm(x)\\
&=\int_X\abs{u(x)}_x\abs{\tilde u(x)}_x\,dm(x)\\
&=\langle \abs{u},\abs{\tilde u}\rangle_{L^2(X,m)}.
\end{align*}
Hence $u$ and $\tilde u$ are paired.\\
Conversely assume that $u$ and $\tilde u$ are paired. Then
\begin{align*}
0=\langle \abs{u},\abs{\tilde u}\rangle_{L^2(X,m)}-\langle u,\tilde u\rangle_{L^2(X,m;E)}=\int_X(\abs{u(x)}_x\abs{\tilde u(x)}_x-\langle u(x),\tilde u(x)\rangle_x)\,dm(x)
\end{align*}
and the integrand is positive by Cauchy-Schwarz inequality. Therefore it must be zero almost everywhere.
\end{proof}

\begin{example}
Let $(X,\B,m)$ be a measure space and $(Q_t)$ a positivity preserving semigroup on $L^2(X,m)$ with associated form $\b$. In Example \ref{ex_positive_semigroup_dominating} it was remarked that $(Q_t)$ is dominated by itself, hence $D(\b)$ is a generalized ideal in itself and
\begin{align*}
\Re \b(u,v)\geq\b(\abs{u},\abs{v})
\end{align*}
holds for all $u,v\in D(\b)$ satisfying $u\overline v=\abs{u}\abs{v}$.\\
Now let $V\colon X\lra [0,\infty)$ be measurable and define the form $\a$ via
\begin{align*}
D(\a)=\{u\in D(\b)\mid V^{\frac 1 2}u\in L^2(X,m)\},\,\a(u)=\b(u)+\int_X V\abs{u}^2\,dm.
\end{align*}
Then $\a$ is dominated by $\b$: If $u\in D(\a)\subset D(\b)$, then $\abs{u}\in D(\b)$ since $\b$ is positive. If $u\in D(\a), v\in D(\b)^+$ such that $v\leq \abs{u}$, then $v\sgn u\in D(\b)$ since $D(\b)$ is a generalized ideal in itself, and \begin{align*}
\int_X V\abs{v\sgn u}^2\,dm\leq\int_X V\abs{u}^2\,dm<\infty,
\end{align*}
hence $v\sgn u\in D(\a)$.\\
Moreover, for all $u,v\in D(\a)$ satisfying $u\overline v=\abs{u}\abs{v}$ we have
\begin{align*}
\Re \a(u,v)=\Re\b(u,v)+\int_X V u\overline v\,dm\geq \b(\abs{u},\abs{v})+\int_X V\abs{u}\abs{v}\,dm=\a(\abs{u},\abs{v}).
\end{align*}
\end{example}

In the light of Example \ref{ex_positive_semigroup_dominating}, every positive form dominates a form, namely itself. Combined with the following corollary this gives a full characterization of the forms that can occur as dominating forms (if the positive cone is a self-dual, isotone projection cone).
\begin{corollary}\label{dominating_form_pos}
Let $\K^+\subset\K$ be a self-dual, isotone projection cone, $S\colon\H\lra\K^+$ a symmetrization and $A$ (resp. $B$) a lower semibounded, self-adjoint operator on $\H$ (resp. $\K)$, and $\a$ (resp. $\b)$ the associated form.\\
If $\a$ is dominated by $\b$, then $\b$ is a positive form.
\end{corollary}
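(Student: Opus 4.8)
The plan is to chain together three results already established in the paper, so that the proof reduces to a short deduction with no genuine computation. First I would exploit the extra hypothesis on the cone: since $\K^+$ is assumed to be a self-dual isotone projection cone, Theorem \ref{thm_char_domination} applies in its full strength, and in particular assertions (i), (ii) and (iii) there are equivalent. The hypothesis that $\a$ is dominated by $\b$ is exactly assertion (iii), so it yields assertion (i), namely that the semigroup $(e^{-tA})$ is dominated by $(e^{-tB})$. This is the one and only place where the stronger (isotone projection cone) assumption is needed, and it is precisely what lets us pass from a statement about forms to a statement about semigroups.

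Next I would translate domination of the semigroup into positivity preservation of $(e^{-tB})$. Fix $t\geq 0$. Because $A$ and $B$ are lower semibounded self-adjoint operators, the operators $e^{-tA}$ and $e^{-tB}$ are bounded and self-adjoint, so Lemma \ref{domination_sums} (b) is applicable: its first assertion says that, when $\K^+$ is self-dual and $e^{-tA}$ is dominated by $e^{-tB}$, the dominating operator $e^{-tB}$ preserves the cone $\K^+$. Since $t\geq 0$ was arbitrary, the entire semigroup $(e^{-tB})$ leaves $\K^+$ invariant, i.e. it is positivity preserving.

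Finally I would invoke Ouhabaz' invariance criterion (Proposition \ref{invariance_ouhabaz}), which, as noted after the definition of a positive form, states that a form is positive if and only if its associated semigroup preserves the positive cone $\K^+$. Applying this to $\b$ and the semigroup $(e^{-tB})$, whose cone-invariance we have just established, gives that $\b$ is a positive form, which is the claim. I do not expect any real obstacle here; the only point that needs a word of care is the verification that $e^{-tA}$ and $e^{-tB}$ are bounded and self-adjoint so that Lemma \ref{domination_sums} genuinely applies, and this is immediate from the lower semiboundedness and self-adjointness of $A$ and $B$.
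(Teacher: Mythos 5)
Your proof is correct and follows essentially the same route as the paper's own argument: Theorem \ref{thm_char_domination} to pass from domination of the forms to domination of the semigroups, Lemma \ref{domination_sums} (b) to conclude that $(e^{-tB})$ leaves $\K^+$ invariant, and Proposition \ref{invariance_ouhabaz} to translate this into positivity of $\b$. Your added remark that the semigroup operators $e^{-tA}$, $e^{-tB}$ are bounded and self-adjoint (so that Lemma \ref{domination_sums} genuinely applies) is a small point of care the paper leaves implicit.
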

\begin{proof}
By the characterization of domination (Theorem \ref{thm_char_domination}), $(e^{-tA})$ is dominated by $(e^{-tB})$. Then Lemma \ref{domination_sums} implies that $(e^{-tB})$ leaves $\K^+$ invariant. By Proposition \ref{invariance_ouhabaz}, this is equivalent to the positivity of $\b$.
\end{proof}

\section{A criterion for form uniqueness}\label{chap_uniqueness_criterion}

Having set up the stage in the last chapter, we can now almost immediately turn to the main theorem of this article, which allows to transfer form uniqueness of a dominating form to that of the dominated form.

The main part of the work is done in the following technical lemma, which might also be of interest in other situations.

\begin{lemma}\label{approx_pos}
Let $\K$ be a real Hilbert space, $\K^+\subset\K$ a self-dual isotone projection cone, $(\b,D(\b))$ a closed, positive form on $\K$ and $D_\b\subset D(\b)$ a dense ideal. If $v\in D(\b)^+$, then there is a sequence $(v_n)$ in $D_\b$ such that $0\leq v_n\leq v$ and $\norm{v_n-v}_\b\to 0$.
\end{lemma}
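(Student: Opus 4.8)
The plan is to start from an arbitrary form-norm approximation of $v$ by elements of $D_\b$ and then repair its two defects: a generic approximant need neither lie in the order interval between $0$ and $v$, nor, after lattice truncation, converge to $v$ in the form norm rather than merely in the norm of $\K$.

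First I would invoke the density of $D_\b$ in $(D(\b),\norm{\cdot}_\b)$ to choose a sequence $(w_n)\subset D_\b$ with $\norm{w_n-v}_\b\to 0$, and set $v_n:=w_n^+\wedge v=(w_n\vee 0)\wedge v$. Since $\K$ is a Riesz space (Proposition \ref{isotone_latticial}) and $D(\b)$ is a sublattice of $\K$ (Lemma \ref{pos_form_lattice}(a)), each $v_n$ belongs to $D(\b)$, and monotonicity of the lattice operations yields $0\leq v_n\leq v$. Furthermore $\abs{v_n}=v_n\leq w_n^+\leq \abs{w_n}$, so the fact that $D_\b$ is an ideal of $D(\b)$ forces $v_n\in D_\b$. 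Thus every $v_n$ already satisfies the required order bounds and lies in $D_\b$; only the convergence $\norm{v_n-v}_\b\to 0$ remains to be established.

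To control the convergence I would work in the form Hilbert space. Fixing $\alpha>\lambda$ and writing $\b_\alpha:=\b+\alpha\langle\cdot,\cdot\rangle$ (so that $\b_\alpha^{1/2}$ is equivalent to $\norm{\cdot}_\b$), two applications of Lemma \ref{pos_form_lattice}(b) give $\b_\alpha(v_n)\leq \b_\alpha(w_n^+)+\b_\alpha(v)\leq \b_\alpha(w_n)+\b_\alpha(v)$, and since $\b_\alpha(w_n)=\norm{w_n}_\b^2$ is bounded, the sequence $(v_n)$ is bounded in $(D(\b),\norm{\cdot}_\b)$. On the other hand, form-norm convergence $w_n\to v$ entails $\K$-norm convergence, and the lattice operations $\vee,\wedge$ are $\K$-Lipschitz (being built from the Lipschitz absolute value via $g\wedge h=\tfrac12(g+h-\abs{g-h})$, cf. Lemmas \ref{symmetrization_Lipschitz} and \ref{norm_Riesz_monotone}); hence $v_n=w_n^+\wedge v\to v^+\wedge v=v$ strongly in $\K$. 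Now a bounded sequence in the form Hilbert space has a weakly convergent subsequence $v_{n_k}\rightharpoonup \tilde v$ in $(D(\b),\norm{\cdot}_\b)$, and since the inclusion into $\K$ is bounded (hence weakly continuous) while $v_n\to v$ strongly in $\K$, the weak limit must satisfy $\tilde v=v$.

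Finally I would upgrade this weak form-convergence to strong convergence by Mazur's lemma: there exist convex combinations of the $v_{n_k}$ converging to $v$ in $\norm{\cdot}_\b$. Crucially, such convex combinations still lie in $D_\b$ (a subspace) and in the order interval $\{u\mid 0\leq u\leq v\}$ (a convex set by (R1), (R2)), so they provide the desired sequence. \textbf{The main obstacle} is precisely this last bridge from $\K$-convergence to form-convergence: the truncations $v_n$ are only known to be \emph{bounded}, not convergent, in the form norm, and the extra regularity is recovered through weak compactness together with Mazur's lemma, both of which are compatible with the order and subspace constraints that the truncation guarantees.
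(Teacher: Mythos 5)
Your proof is correct and takes essentially the same route as the paper's: approximate $v$ from $D_\b$ in the form norm, truncate to $w_n^+\wedge v$ (which lies in $D_\b$ by the ideal property and is form-norm bounded via Lemma \ref{pos_form_lattice}), establish $\K$-norm convergence to $v$, and conclude by passing to convex combinations, which remain in $D_\b$ and between $0$ and $v$. The only cosmetic differences are that the paper invokes the Banach--Saks theorem (Cesàro means of a subsequence) where you use weak compactness plus Mazur's lemma, and that it derives the $\K$-convergence from the order inequality $0\leq v-v\wedge w_n^+\leq\abs{v-w_n}$ together with Lemma \ref{norm_Riesz_monotone}, rather than from Lipschitz continuity of the lattice operations (which holds, but note that Lemma \ref{symmetrization_Lipschitz} is stated for symmetrizations; its proof uses only (S1), so it does apply to the absolute mapping $\abs{\cdot}$ of Lemma \ref{absolute_mapping_Riesz}).
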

\begin{proof}
Let $v\in D(\b)^+$. Since $D_\b\subset D(\b)$ is dense, there is a sequence $\tilde v_n$ in $D_\b$ such that $\norm{\tilde v_n-v}_\b\to 0$. By Lemma \ref{pos_form_lattice}, $D(\b)$ is a sublattice of $\K$, hence $\tilde v_n^+\wedge v\in D(\b)$. 
From the inequality $0\leq \tilde v_n^+\wedge v\leq v_n^+\leq \abs{\tilde v_n}$ it follows that $\tilde v_n^+\wedge v\in D_\b$ since $D_\b\subset D(\b)$ is an ideal.\\
Let $-\lambda<0$ be a lower bound for $\b$. Then Lemma \ref{pos_form_lattice} implies that
\begin{align*}
\norm{\tilde v_n^+\wedge v}_{\b}^2&=\b_{1+\lambda}(\tilde v_n^+\wedge v)\\
&\leq \b_{1+\lambda}(\tilde v_n^+)+\b_{1+\lambda}(v)\\
&\leq \b_{1+\lambda}(\tilde v_n)+\b_{1+\lambda}(v)\\
&=\norm{\tilde v_n}_{\b}^2+\norm{v}_{\b}^2
\end{align*}
Since $(\tilde v_n)$ is convergent in $(D(\b),\langle\cdot,\cdot\rangle_\b)$, it is in particular bounded, and the above  inequality shows that $(\tilde v_n^+\wedge v)$ is bounded as well.\\
By the Banach-Saks Theorem (cf. \cite{Wer}, Satz V.3.8) there is a subsequence $(\tilde v_{n_k})$ and an element $\tilde v\in D(\b)$ such that
\begin{align*}
v_N:=\frac 1 N\sum_{k=1}^N \tilde v_{n_k}^+\wedge v\overset{\norm\cdot_\b}{\to}\tilde v,\,N\to\infty.
\end{align*}
Obviously, $v_N\in D_\b$ and $0\leq v_N\leq v$ for all $N\in \IN$.\\
Moreover, by Lemma \ref{basic_Riesz} we have
\begin{align*}
\abs{v-\tilde v_n}=(v\vee \tilde v_n)-(v\wedge \tilde v_n)\geq v-v\wedge \tilde v_n
\end{align*}
and
\begin{align*}
v\wedge \tilde v_n^+=v\wedge(\tilde v_n\vee 0)=(v\wedge \tilde v_n)\vee (v\wedge 0)\geq v\wedge \tilde v_n,
\end{align*}
hence
\begin{align*}
\abs{v-\tilde v_n}\geq v-v\wedge \tilde v_n^+\geq 0.
\end{align*}
By Lemma \ref{norm_Riesz_monotone}, this inequality implies $\norm{v-v\wedge \tilde v_n^+}_\K\leq \norm{v-\tilde v_n}_\K\to 0$ and therefore also $\norm{v- v_N}_\K\to 0$. Thus, $\tilde v=v$.
\end{proof}

\begin{theorem}\label{uniqueness_forms}
Let $\H$ be a Hilbert space, $\K$ a real Hilbert space, $\K^+\subset \K$ a self-dual isotone projection cone, and $S\colon \H\lra\K^+$ a symmetrization.\\
Let $(\a,D(\a))$ be a closed form in $\H$, $(\b,D(\b))$ a closed form in $\K$, and $D_\a\subset D(\a), D_{\b}\subset D(\b)$ ideals such that the following conditions hold:
\begin{itemize}
\item $\a$ is dominated by $\b$
\item $D_\b^+\cap S(D(\a))\subset S(D_\a)$
\end{itemize}
If $D_\b$ is a form core for $\b$, then $D_\a$ is a form core for $\a$.
\end{theorem}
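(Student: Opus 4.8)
The plan is to show that every $f\in D(\a)$ is the $\norm\cdot_\a$-limit of a sequence drawn from $D_\a$; this is precisely the statement that $D_\a$ is a form core. First I would fix a common lower bound $-\lambda$ for $\a$ and $\b$ and pass to the nonnegative forms $\tilde\a=\a+(1+\lambda)\langle\cdot,\cdot\rangle$ and $\tilde\b=\b+(1+\lambda)\langle\cdot,\cdot\rangle$, whose square roots are the form norms $\norm\cdot_\a$ and $\norm\cdot_\b$. Domination survives this shift: if $f_1,f_2$ are $S(f_2)$-paired then $\langle f_1,f_2\rangle=\langle S(f_1),S(f_2)\rangle$ is real and nonnegative, so $\Re\tilde\a(f_1,f_2)\geq\tilde\b(S(f_1),S(f_2))$ follows from $\Re\a(f_1,f_2)\geq\b(S(f_1),S(f_2))$. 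I would also record that $\b$ is a positive form by Corollary \ref{dominating_form_pos} and that $D_\b$, being a form core, is a dense ideal in $D(\b)$, so that Lemma \ref{approx_pos} is applicable.

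Next I would construct the approximating sequence. Let $f\in D(\a)$. Since $D(\a)$ is a generalized ideal of $D(\b)$, property (I1) gives $S(f)\in D(\b)$, and as $S$ takes values in $\K^+$ we get $S(f)\in D(\b)^+$. Applying Lemma \ref{approx_pos} to $v=S(f)$ yields a sequence $(v_n)\subset D_\b$ with $0\leq v_n\leq S(f)$ and $\norm{v_n-S(f)}_\b\to 0$. For each $n$, property (I2) furnishes $f_n\in D(\a)$ such that $f$ and $f_n$ are $v_n$-paired; in particular $S(f_n)=v_n$. Now $v_n=S(f_n)\in S(D(\a))$ and $v_n\in D_\b^+$, so the hypothesis $D_\b^+\cap S(D(\a))\subset S(D_\a)$ produces $g_n\in D_\a$ with $S(g_n)=v_n$. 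Since $f_n\in D(\a)$, $g_n\in D_\a$, and $S(f_n)=v_n\leq S(g_n)$, the ideal property of $D_\a$ in $D(\a)$ forces $f_n\in D_\a$. This is the step where the generalized-ideal structure and the two order hypotheses must be made to cooperate, and I expect the most delicate bookkeeping to lie here.

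It remains to prove $\norm{f_n-f}_\a\to 0$. From $S(f_n)=v_n\leq S(f)$ and the fact that $f,f_n$ are $S(f_n)$-paired, Lemma \ref{abs_difference} gives $S(f-f_n)=S(f)-v_n$ together with the pairing of $f-f_n$ and $f_n$ relative to $S(f_n)$. Hence $\norm{f-f_n}_\H=\norm{S(f-f_n)}_\K=\norm{S(f)-v_n}_\K\leq\norm{S(f)-v_n}_\b\to 0$, so $f_n\to f$ in $\H$. Applying domination to the pair $f-f_n,f_n$ yields
\[
\Re\tilde\a(f-f_n,f_n)\geq\tilde\b(S(f)-v_n,v_n),
\]
and the right-hand side tends to $0$ because $\norm{S(f)-v_n}_\b\to 0$ while $\norm{v_n}_\b$ stays bounded. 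Expanding $\tilde\a(f)=\tilde\a(f_n)+\tilde\a(f-f_n)+2\Re\tilde\a(f-f_n,f_n)$ and discarding the nonnegative term $\tilde\a(f-f_n)$ gives $\tilde\a(f_n)\leq\tilde\a(f)-2\tilde\b(S(f)-v_n,v_n)$, whence $\limsup_n\tilde\a(f_n)\leq\tilde\a(f)$.

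The hard part is to turn this one-sided bound into genuine form convergence, since domination supplies only the single inequality $\Re\a(\cdot,\cdot)\geq\b(\cdot,\cdot)$. Because $\a$ is closed, $\tilde\a$ is lower semicontinuous along the $\H$-convergent sequence $f_n\to f$, so $\tilde\a(f)\leq\liminf_n\tilde\a(f_n)$; together with the limsup bound this forces $\norm{f_n}_\a=\tilde\a(f_n)^{1/2}\to\tilde\a(f)^{1/2}=\norm{f}_\a$. In the form Hilbert space $(D(\a),\tilde\a)$ the sequence $(f_n)$ is then bounded, and since the embedding into $\H$ is continuous, every weakly convergent subsequence has weak limit $f$; hence $f_n\rightharpoonup f$ weakly in $(D(\a),\tilde\a)$. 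Weak convergence together with convergence of the norms yields strong convergence, i.e. $\norm{f_n-f}_\a\to 0$. As each $f_n\in D_\a$, this shows $D_\a$ is a form core for $\a$.
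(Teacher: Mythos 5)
Your proof is correct, and it reaches the conclusion by a genuinely different route than the paper. Both arguments share the same core ingredients: Corollary \ref{dominating_form_pos} to see that $\b$ is positive, Lemma \ref{approx_pos} to produce truncations $0\leq v_n\leq S(\cdot)$ lying in $D_\b$, property (I2) to pair, and the hypothesis $D_\b^+\cap S(D(\a))\subset S(D_\a)$ together with the ideal property of $D_\a$ to conclude that the paired element lies in $D_\a$ --- this last step of yours is verbatim the paper's. The difference is what these ingredients are applied to. The paper argues by duality: it takes $h\in D(\a)$ orthogonal to $D_\a$ in the form Hilbert space $(D(\a),\langle\cdot,\cdot\rangle_\a)$, pairs $h$ with the truncations $v_N$ of $S(h)$, feeds the resulting elements of $D_\a$ into the orthogonality relation, and domination then gives $0\geq\langle S(h),v_N\rangle_\b\to\norm{S(h)}_\b^2$, hence $h=0$; no explicit convergence of approximants in $\norm\cdot_\a$ is ever needed. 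You instead fix an arbitrary $f\in D(\a)$, pair it with the truncations $v_n$ of $S(f)$ to get explicit approximants $f_n\in D_\a$ with $S(f_n)=v_n\leq S(f)$, and prove $\norm{f_n-f}_\a\to 0$ directly. Your convergence step is where the extra work sits: domination, applied to the $S(f_n)$-paired couple $(f-f_n,f_n)$ supplied by Lemma \ref{abs_difference}, only yields $\limsup_n\norm{f_n}_\a\leq\norm{f}_\a$, and you upgrade this to strong form-norm convergence via lower semicontinuity of the closed form $\a$ along $\H$-convergent sequences, weak convergence in the form Hilbert space, and the ``weak convergence plus convergence of norms implies strong convergence'' property of Hilbert spaces. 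All of these steps are standard and used correctly; note also that your weak-compactness argument actually reproves the lower-semicontinuity fact you invoke (boundedness in $\norm\cdot_\a$ plus identification of all weak subsequential limits with $f$ gives $\norm{f}_\a\leq\liminf_n\norm{f_n}_\a$), so nothing circular or unproved is hidden there. As for what each approach buys: the paper's orthocomplement argument is shorter and needs nothing beyond elementary Hilbert-space geometry and a single application of domination, while yours is constructive --- it exhibits a concrete approximating sequence satisfying the additional constraint $S(f_n)\leq S(f)$, which is a slightly stronger, domination-compatible form of the density statement.
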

\begin{proof}
Let $-\lambda<0$ be a common lower bound for $\a$ and $\b$. As $\a$ is closed, $D(\a)$ is a Hilbert space with the inner product $\langle\cdot,\cdot\rangle_\a=(1+\lambda)\langle\cdot,\cdot\rangle_{\H}+\a(\cdot,\cdot)$ and analogously for $\b$.\\
We show that $D_\a\subset D(\a)$ is dense with respect to $\norm\cdot_\a$ by proving that $D_\a^\perp=\{0\}$ in $(D(\a),\langle\cdot,\cdot\rangle_\a)$. For this purpose, let $h\in D(\a)$ such that
\begin{align*}
0=\langle h,u\rangle_\a=(1+\lambda)\langle h,u\rangle+\a(h,u)
\end{align*}
for all $u\in D_\a$.\\
Next take $v\in D_\b^+$ such that $v\leq S(h)$. Since $\a$ is dominated by $\b$, $D(\a)$ is a generalized ideal of $D(\b)$. Hence there is an $\tilde h\in D(\a)$ such that $h,\tilde h$ are $v$-paired. In particular, $S(\tilde h)=v\in D_\b^+\cap S(D(\a))\subset S(D_\a)$, and since $D_\a$ is an ideal in $D(\a)$, $\tilde h\in D_\a$.\\
Moreover, since $\a$ is dominated by $\b$, $S(h)\in D(\b)$ and
\begin{align*}
0=(1+\lambda)\langle h,\tilde h\rangle+\Re\a(h,\tilde h)\geq (1+\lambda)\langle S(h),v)\rangle+b(S(h),v).\tag{$\ast$}\label{ineq}
\end{align*}
By Lemma \ref{dominating_form_pos}, the form $\b$ is positive. An application of Lemma \ref{approx_pos} yields a sequence $(v_n)$ in $D_\b$ such that $0\leq v_n\leq S(h)$ and $\norm{v_n-S(h)}_\b\to 0$.\\
Applying inequality (\ref{ineq}) to $v= v_N$ we obtain
\begin{align*}
0\geq (1+\lambda)\langle S(h), v_N\rangle+\b(S(h), v_N)=\langle S(h), v_N\rangle_\b\to \norm{S(h)}_\b^2.
\end{align*}
Hence $S(h)=0$ and therefore also $h=0$. Thus, $D_\a^\perp=\{0\}$.
\end{proof}

\begin{remark}
\begin{itemize}
\item In applications, the situation will often be as follows: We are given forms $\a_0$ on $D_\a$, $\b_0$ on $D_\b$ (usually not closed) and minimal extensions $\a_{\text{min}}$, $\b_{\text{min}}$ (the closures of $\a_0$, $\b_0$) and maximal extensions $\a_{\text{max}}$, $\b_{\text{max}}$.\\
If $\b_{\text{min}}=\b_{\text{max}}$, then the theorem yields $\a_{\text{min}}=\a_{\text{max}}$.\\
This is discussed in detail in Chapter \ref{Applications}.
\item If $\H=\K$ and $S=\abs\cdot$, then $D_\a=D_\b$ satisfies the condition $D_ \b ^+\subset S(D_\a)$.
\item In the light of Theorem \ref{thm_char_domination}, the condition that $\a$ is dominated by $\b$ can also be phrased in terms of the associated semigroups or resolvents.
\item In the case of $L^2$-spaces, the application of the Banach-Saks Theorem in the proof can be replaced by the fact that the $L^2$-convergent sequence $(v_n)$ has a pointwise convergent subsequence.
\end{itemize}
\end{remark}

We conclude this section with a corollary that matches the concrete situation of our applications in the next chapter.

\begin{corollary}\label{corollary_uniqueness_forms}
Let $X$ be a topological space and $\mu$ a Borel measure on $X$. Denote by $L^\infty_c(X,\mu)$ the space of essentially bounded functions that vanish outside a compact set.\\
Let $E\lra X$ be a Hermitian vector bundle over $X$, and denote by $L^\infty_c(X,\mu;E)$ the space of essentially bounded sections in $E$ that vanish outside a compact set.\\
Assume that $(\b,D(\b))$ is a closed form in $L^2(X,\mu)$ and $(\a,D(\a))$ a closed form in $L^2(X,\mu;E)$ that is dominated by $\b$.\\
If $D(\b)\cap L^\infty_c(X,\mu)$ is a form core for $\b$, then $D(\a)\cap L^\infty_c(X,\mu;E)$ is a form core for $\a$.
\end{corollary}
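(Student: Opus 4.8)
The plan is to deduce this corollary directly from Theorem \ref{uniqueness_forms} by specializing the abstract framework. I would set $\H = L^2(X,\mu;E)$ and $\K = L^2(X,\mu)$, viewed as a real Hilbert space, take $\K^+ = L^2(X,\mu)^+$, which is a self-dual isotone projection cone, and let $S\colon\H\lra\K^+$ be the pointwise norm $(Sf)(x)=\abs{f(x)}_x$, which is a symmetrization (cf. Example \ref{ex_sym_L2} and the bundle version following it). As the candidate ideals I would take $D_\a = D(\a)\cap L^\infty_c(X,\mu;E)$ and $D_\b = D(\b)\cap L^\infty_c(X,\mu)$. With these choices, $D_\b$ being a form core for $\b$ is exactly the hypothesis and $D_\a$ being a form core for $\a$ is exactly the conclusion, so the task reduces to checking the hypotheses of Theorem \ref{uniqueness_forms}.

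First I would verify that $D_\a$ and $D_\b$ are ideals in $D(\a)$ and $D(\b)$, respectively. This rests only on the fact that $L^\infty_c$ is solid: if $u$ is essentially bounded and vanishes outside a compact set and $\abs{v}\leq\abs{u}$ pointwise almost everywhere, then $v$ has the same property. Hence $u\in D_\a$, $v\in D(\a)$ and $S(v)\leq S(u)$ force $v\in D_\a$, and likewise for $D_\b$. The domination of $\a$ by $\b$ is assumed, so the only substantive condition left to check is the inclusion $D_\b^+\cap S(D(\a))\subset S(D_\a)$.

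This inclusion is the crux, and it is where I would use the generalized ideal structure supplied by the domination. Suppose $v\in D_\b^+\cap S(D(\a))$. Then $v=S(f)$ for some $f\in D(\a)$, and $v\in D(\b)^+$ with $v\leq S(f)$. Because $\a$ is dominated by $\b$, the domain $D(\a)$ is a generalized ideal of $D(\b)$, so property (I2) produces an $f_2\in D(\a)$ such that $f, f_2$ are $v$-paired; in particular $S(f_2)=v$. Since $v$ is essentially bounded and vanishes outside a compact set, so does $f_2$, whence $f_2\in D_\a$ and $v=S(f_2)\in S(D_\a)$.

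With all hypotheses of Theorem \ref{uniqueness_forms} verified, the conclusion follows. The main obstacle is precisely this last inclusion: one cannot simply form $v\sgn f$ and argue membership in $D(\a)$ by hand, since nothing is assumed about the regularity of the bundle, the measure, or the form; instead the abstract generalized ideal property, which is built into the notion of domination, delivers the required section $f_2$ for free. The remaining verifications (solidity of $L^\infty_c$ and the identification of the abstract data with the concrete $L^2$-setting) are routine.
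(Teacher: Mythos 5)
Your proof is correct, and its skeleton is exactly the paper's: specialize Theorem \ref{uniqueness_forms} to $D_\a = D(\a)\cap L^\infty_c(X,\mu;E)$ and $D_\b = D(\b)\cap L^\infty_c(X,\mu)$, with the ideal property coming from solidity of $L^\infty_c$. The one place you diverge is the verification of $D_\b^+\cap S(D(\a))\subset S(D_\a)$, and there your argument is a detour. If $v\in D_\b^+\cap S(D(\a))$, then $v=S(f)$ for some $f\in D(\a)$; since $\abs{f(x)}_x=v(x)$ almost everywhere and $v$ is essentially bounded and vanishes outside a compact set, $f$ itself lies in $L^\infty_c(X,\mu;E)$, hence $f\in D_\a$ and $v=S(f)\in S(D_\a)$. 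That one-liner is the paper's entire proof of the inclusion: no domination is used in this step. Your invocation of the generalized ideal property (I2) does go through (it applies with $g=v\leq S(f)$, and boundedness and compact support of $v$ transfer to $f_2$ because $S(f_2)=v$), but by the uniqueness statement of Corollary \ref{f_2_unique} the $f_2$ it produces is just $f$ again. Relatedly, your closing remark that one ``cannot simply form $v\sgn f$ and argue membership by hand'' is off the mark: precisely because $v=S(f)$, the section $v\sgn f$ \emph{is} $f$ almost everywhere, so the by-hand argument is trivial here. The net effect of your detour is to make this hypothesis look like it depends on domination, when in the $L^\infty_c$ setting it is essentially automatic; the domination hypothesis does its real work only inside Theorem \ref{uniqueness_forms} itself.
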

\begin{proof}
We will apply Theorem \ref{uniqueness_forms} to $D_\a=L^\infty_c(X,\mu;E)\cap D(\a)$ and $D_\b=L^\infty_c(X,\mu)\cap D(\b)$. It is obvious that these are ideals in $D(\a)$ and $D(\b)$ respectively.\\
Now let $g\in D_\b^+\cap \abs{D(\a)}$. Then there is an $f\in D(\a)$ such that $\abs{f}=g\in L^\infty_c(X,\mu)$. Thus, $f\in L^\infty_c(X,\mu;E)\cap D(\a)$ and $g=\abs{f}\in \abs{D_\a}$. 
\end{proof}

\begin{remark}
\begin{itemize}
\item We tacitly assumed that $L^2(X,\mu)$ is the space of real-valued $L^2$-functions in order to apply Theorem \ref{uniqueness_forms}, whereas $L^2(X,\mu;E)$ may be viewed either as real or as complex. We will adopt this convention also for the applications of this corollary in the next chapter.
\item If $(\b,D(\b))$ is a regular Dirichlet form, $L^\infty_c(X,\mu)\cap D(\b)$ is a form core for $\b$. Indeed, $C_c(X)\cap D(\b)\subset L^\infty_c(X,\mu)\cap D(\b)$ is dense in $D(\b)$ by definition. Note, however, that we do not use the second Beurling-Deny criterion in our reasoning.
\item In our applications, we will only encounter trivial vector bundles. Thus, $L^2(X,\mu;E)$ can be identified with $L^2(X,\mu;\IC^n)$ via the trivialization. However, this is not necessarily the case in other possible applications, for example if $a$ is the form associated with the Hodge-de Rham Laplacian on $p$-forms on a manifold.
\end{itemize}
\end{remark}

In the smooth case, one is usually interested in the closure of the form defined on smooth functions (sections) as minimal form. We make the following definition adapted to this situation.

\begin{definition}
Let $M$ be a Riemannian manifold and $E\lra M$ a smooth Hermitian vector bundle. A form $\a$ on $L^2(M;E)$ is called \emph{smoothly inner regular} if $\Gamma_c^\infty(M;E)\cap D(\a)$ is dense in $D(\a)\cap L^\infty_c(M;E)$ with respect to $\norm\cdot_\a$.
\end{definition}

From the definition of smooth inner regularity and the above corollary, the following corollary can easily be deduced.

\begin{corollary}\label{uniqueness_smooth}
Let $M$ be a Riemannian manifold and $E\lra M$ a smooth Hermitian vector bundle. Let $\b$ be a closed form on $L^2(M)$ and $\a$ a closed, smoothly inner regular form on $L^2(M;E)$ that is dominated by $\b$.\\
If $C_c^\infty(M)\cap D(\b)$ is a form core for $\b$, then $\Gamma_c^\infty(M;E)\cap D(\a)$ is a form core for $\a$.
\end{corollary}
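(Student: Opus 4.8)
The plan is to chain the two preceding corollaries together with the definition of smooth inner regularity, viewing $M$ as a topological space equipped with its Riemannian volume measure $\mu$ and $E$ as a Hermitian vector bundle, so that Corollary \ref{corollary_uniqueness_forms} applies verbatim with $X=M$. The whole argument is a density-chaining in the Hilbert space $(D(\a),\langle\cdot,\cdot\rangle_\a)$.

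First I would promote the hypothesis on $\b$ from smooth to bounded, compactly supported functions. Since every smooth compactly supported function is essentially bounded and vanishes outside a compact set, we have the inclusions
\begin{align*}
C_c^\infty(M)\cap D(\b)\subset L^\infty_c(M,\mu)\cap D(\b)\subset D(\b).
\end{align*}
As the smallest of these three spaces is $\norm\cdot_\b$-dense in $D(\b)$ by assumption, the intermediate one is dense as well; hence $L^\infty_c(M,\mu)\cap D(\b)$ is a form core for $\b$. This puts us exactly in the situation of Corollary \ref{corollary_uniqueness_forms}: both forms are closed, $\a$ is dominated by $\b$, and $L^\infty_c(M,\mu)\cap D(\b)$ is a form core for $\b$. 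Applying that corollary yields that $D(\a)\cap L^\infty_c(M,\mu;E)$ is a form core for $\a$, i.e.\ it is $\norm\cdot_\a$-dense in $D(\a)$.

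Finally I would descend to smooth sections using smooth inner regularity. By definition, $\Gamma_c^\infty(M;E)\cap D(\a)$ is $\norm\cdot_\a$-dense in $D(\a)\cap L^\infty_c(M;E)$, which in turn is $\norm\cdot_\a$-dense in $D(\a)$ by the previous step. Transitivity of density then gives that $\Gamma_c^\infty(M;E)\cap D(\a)$ is $\norm\cdot_\a$-dense in $D(\a)$, which is precisely the assertion that it is a form core for $\a$. I do not expect a genuine obstacle here: the content is entirely in the two corollaries and the definition of smooth inner regularity, and the only point requiring care is the legitimacy of identifying $M,E,\mu$ with the data of Corollary \ref{corollary_uniqueness_forms}---in particular that compact sets have finite volume so that $C_c^\infty(M)\subset L^2(M,\mu)$ and the inclusions above make sense---which holds automatically for the Riemannian volume measure.
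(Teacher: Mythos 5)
Your proof is correct and is exactly the deduction the paper has in mind: the paper omits the proof, noting only that the corollary ``can easily be deduced'' from Corollary \ref{corollary_uniqueness_forms} and the definition of smooth inner regularity, and your three-step density chain ($C_c^\infty(M)\cap D(\b)$ dense forces $L^\infty_c(M,\mu)\cap D(\b)$ dense, then Corollary \ref{corollary_uniqueness_forms}, then smooth inner regularity plus transitivity of density in $(D(\a),\norm\cdot_\a)$) is precisely that deduction, spelled out correctly.
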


\section{Applications}\label{Applications}

\subsection{Magnetic Schrödinger forms on graphs}\label{Magnetic Schrödinger operators on graphs}
In this section we will study discrete analogs of the Laplacian respectively magnetic Schrödinger operators in Euclidean space. Analysis on graphs has been an active field of research in recent years and uniqueness of extensions of operators respectively forms on graphs have been intensively studied. We just point to \cite{HKLW}, \cite{HKMW} for non-magnetic forms and the recent series of works by Milatovic and Truc (\cite{MT14},\cite{MT15}) for magnetic forms as a few examples; this list is by no means meant to be complete.

Compared to the Euclidean case, the discrete setting allows more clarity in the presentation as some mere technical complications do not appear. In particular, Corollary \ref{corollary_uniqueness_forms} can be applied directly since $L_c^\infty(X)$ and $C_c(X)$ coincide for discrete spaces.

We will start with some basic definitions, including those of magnetic Schrö\-dinger forms on graphs (Definitions \ref{def_magnetic_graph_Neumann} and \ref{def_magnetic_graph_Dirichlet}), essentially following \cite{KL10},\cite{KL12} regarding graphs and Dirichlet forms over discrete spaces and \cite{MT15} regarding vector bundles over graphs and magnetic Schrödinger operators. Then we show that the form with magnetic field is dominated by the form without magnetic field (Proposition \ref{domination_magnetic_forms}) before we finally give the uniqueness result (Theorem \ref{form_uniqueness_graphs}) and discuss some examples.

\begin{definition}[Weighted graph]
A weighted graph $(X,b,c,m)$ consists of an (at most) countable set $X$, an edge weight $b\colon X\times X\lra [0,\infty)$, a killing term $c\colon X\lra[0,\infty)$ and a measure $m\colon X\lra(0,\infty)$ subject to the following conditions:
\begin{itemize}
\item[(b1)]$b(x,x)=0$,
\item[(b2)]$b(x,y)=b(y,x)$,
\item[(b3)]$\sum_{z\in X}b(x,z)<\infty$
\end{itemize}
for all $x,y\in X$.
\end{definition}
Observe that we do not assume our graphs to be locally finite, that is, $\{y\in X\mid b(x,y)>0\}$ may be infinite as long the edge weights are still summable.\\
We shall regard $X$ as a discrete topological space. Consequently, $C_c(X)$ is the space of functions on $X$ with finite support.\\
We regard $m$ as a measure on $\mathcal{P}(X)$ via
\begin{align*}
m(A):=\sum_{x\in A} m(x),\;A\subset X,
\end{align*}
and denote the corresponding $L^2$-space by $\ell^2(X,m)$.

\begin{definition}[Hermitian vector bundle]
A \emph{Hermitian vector bundle} over a discrete set $X$ is a family $F=(F_x,\langle\cdot\,,\cdot\rangle_x)_{x\in X}$ of finite-dimensional Hilbert spaces together with a unitary connection $\Phi_{x,y}\colon F_y\lra F_x$ for all $x,y\in X$ such that $\Phi_{x,y}=\Phi_{y,x}^{-1}$.\\
For a Hermitian vector bundle $F$ over $X$ we will denote by
\begin{align*}
\Gamma(X;F)&=\{u\colon X\lra\prod_{x\in X}F_x\mid u(x)\in F_x\},\\
\Gamma_c(X;F)&=\{u\in\Gamma(X;F)\mid \supp u\text{ finite}\},\\
\ell^2(X,m;F)&=\{u\in\Gamma(X;F)\mid\sum_{x\in X}\langle u(x),u(x)\rangle_x m(x)<\infty\}
\end{align*}
the space of all sections, the space of all sections with compact support and the space of all $L^2$-sections. The latter becomes a Hilbert space equipped with the inner product
\begin{align*}
\langle \cdot,\cdot\rangle_{\ell^2(X,m;F)}\colon\ell^2(X,m;F)\times\ell^2(X,m;F)\lra\IC,\,(u,v)\mapsto\sum_{x\in X}\langle u(x),v(x)\rangle_x m(x).
\end{align*}
A bundle endomorphism $W$ of a Hermitian vector bundle $F$ is a family of linear maps $(W(x)\colon F_x\lra F_x)_{x\in X}$.
\end{definition}

For the remainder of the section, $(X,b,c,m)$ is a weighted graph, $F$ a Hermitian vector bundle over $X$ with unitary connection $\Phi$ and $W$ a bundle endomorphism of $F$ that is pointwise positive, that is, $\langle W(x)v,v\rangle_x\geq 0$ for all $x\in X,\,v\in F_x$.

Now we can define the basic object of our interest, the magnetic Schrödinger form (with Dirichlet and Neumann boundary conditions).

\begin{definition}[Magnetic form with Neumann boundary conditions]\label{def_magnetic_graph_Neumann}
For $u\in \Gamma(X;F)$ let
\begin{align*}
\tilde Q_{\Phi,b,W}(u)=\frac 1 2\sum_{x,y}b(x,y)\abs{u(x)-\Phi_{x,y}u(y)}_x^2+\sum_{x}\langle W(x)u(x),u(x)\rangle_x\in[0,\infty].
\end{align*}
The magnetic Schrödinger form with Neumann boundary conditions is defined via
\begin{align*}
D(Q^{(N)}_{\Phi,b,W})&=\{u\in\ell^2(X,m)\mid\tilde Q_{\Phi,b,W}(u)<\infty\},\\
Q^{(N)}_{\Phi,b,W}(u)&=\tilde Q_{\Phi,b,W}(u).
\end{align*}
\end{definition}

To abridge notation, we will write $\norm{\cdot}_{\Phi,b,W}$ for the form norm of $Q^{(N)}_{\Phi,b,W}$. In the next lemma we show that $Q^{(N)}_{\Phi,b,W}$ is closed. This fact will then be used to define the magnetic Schrödinger form with Dirichlet boundary conditions.

\begin{lemma}
The form $Q^{(N)}_{\Phi,b,W}$ is closed.
\end{lemma}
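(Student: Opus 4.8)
The plan is to prove closedness directly from the definition of a closed form, i.e.\ to show that $(D(Q^{(N)}_{\Phi,b,W}),\norm\cdot_{\Phi,b,W})$ is complete, where $\norm{u}_{\Phi,b,W}^2=Q^{(N)}_{\Phi,b,W}(u)+\norm{u}_{\ell^2(X,m;F)}^2$. Since $\tilde Q_{\Phi,b,W}$ is a sum of non-negative terms---here the pointwise positivity of $W$ enters, so that $\langle W(x)v,v\rangle_x\geq 0$---the form is non-negative. Hence no spectral shift is needed and one has the basic inequality $\norm{u}_{\ell^2(X,m;F)}\leq\norm{u}_{\Phi,b,W}$.

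First I would take a sequence $(u_n)$ in $D(Q^{(N)}_{\Phi,b,W})$ that is Cauchy with respect to $\norm\cdot_{\Phi,b,W}$. The inequality above shows $(u_n)$ is Cauchy in $\ell^2(X,m;F)$, hence converges there to some $u\in\ell^2(X,m;F)$. Because $X$ carries the discrete topology and $m(x)>0$ for every $x$, $\ell^2$-convergence forces pointwise convergence: $u_n(x)\to u(x)$ in the fibre $F_x$ for each $x\in X$. As each connection map $\Phi_{x,y}$ is a fixed bounded (indeed unitary) map between finite-dimensional fibres, also $\Phi_{x,y}u_n(y)\to\Phi_{x,y}u(y)$, so every individual summand appearing in $\tilde Q_{\Phi,b,W}(u_n-u_m)$ converges pointwise as $m\to\infty$.

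The crucial step is an application of Fatou's lemma to the counting-measure sums defining $\tilde Q_{\Phi,b,W}$. Fix $n$. Since $u_n-u_m\to u_n-u$ pointwise as $m\to\infty$, Fatou's lemma yields $\tilde Q_{\Phi,b,W}(u_n-u)\leq\liminf_{m\to\infty}\tilde Q_{\Phi,b,W}(u_n-u_m)$. Given $\epsilon>0$, choose $N$ so that $\norm{u_n-u_m}_{\Phi,b,W}^2\leq\epsilon$ for $n,m\geq N$; then $\tilde Q_{\Phi,b,W}(u_n-u_m)\leq\epsilon$, and passing to the $\liminf$ gives $\tilde Q_{\Phi,b,W}(u_n-u)\leq\epsilon<\infty$ for all $n\geq N$. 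In particular $u_n-u\in D(Q^{(N)}_{\Phi,b,W})$, and since the domain is a vector space containing $u_n$, we get $u=u_n-(u_n-u)\in D(Q^{(N)}_{\Phi,b,W})$. Finally $\norm{u_n-u}_{\Phi,b,W}^2=\tilde Q_{\Phi,b,W}(u_n-u)+\norm{u_n-u}_{\ell^2(X,m;F)}^2\to 0$, which establishes form-norm convergence and hence completeness.

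I expect the only point requiring genuine care to be the justification of Fatou's lemma, that is, the lower semicontinuity of $\tilde Q_{\Phi,b,W}$ under pointwise convergence. This rests on two facts: the non-negativity of every summand, and the continuity of every summand in the pointwise limit (the latter using finite-dimensionality of the fibres and unitarity of $\Phi$). Everything else reduces to the routine observation that on a discrete measure space $\ell^2$-convergence entails pointwise convergence.
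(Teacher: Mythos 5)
Your proof is correct and follows essentially the same route as the paper's: both rest on applying Fatou's lemma to the nonnegative sums defining $\tilde Q_{\Phi,b,W}$, using that $\ell^2$-convergence on a discrete measure space with $m>0$ gives pointwise convergence in the fibres. The only (inessential) difference is organizational: you apply Fatou once to the differences $u_n-u_m$ and deduce $u\in D(Q^{(N)}_{\Phi,b,W})$ from the vector-space structure of the domain, whereas the paper applies Fatou separately, first to show $u$ lies in the domain and then to get form-norm convergence.
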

\begin{proof}
Let $(u_n)$ be a Cauchy sequence in $D(Q^{(N)}_{\Phi,b,W})$ and $u_n\to u$ in $\ell^2(X,m;F)$. By Fatou's lemma,
\begin{align*}
\tilde Q_{\Phi,b,W}(u)&=\frac 1 2\sum_{x,y}\lim_{n\to \infty}b(x,y)\abs{u_n(x)-\Phi_{x,y}u_n(y)}^2+\sum_{x}\lim_{n\to\infty}\langle W(x)u_n(x),u_n(x)\rangle_x\\
&\leq\liminf_{n\to\infty}\left(\frac 1 2\sum_{x,y}b(x,y)\abs{u(x)-\Phi_{x,y}u(y)}_x^2+\sum_{x}\langle W(x)u(x),u(x)\rangle_x\right)\\
&=\liminf_{n\to\infty}\tilde Q_{\Phi,b,W}(u_n)\\
&<\infty.
\end{align*}
Thus, $u\in D(Q^{(N)}_{\Phi,b,W})$.\\
By the same argument,
\begin{align*}
Q^{(N)}_{\Phi,b,W}(u-u_n)&=Q^{(N)}_{\Phi,b,W}(\lim_{m\to\infty}(u_m-u_n))\\
&\leq\liminf_{m\to\infty}Q^{(N)}_{\Phi,b,W}(u_m-u_n)\\
&\to 0,\,n\to\infty.
\end{align*}
Therefore, $\norm{u_n-u}_{\Phi,b,W}\to 0$.
\end{proof}

\begin{definition}[Magnetic form with Dirichlet boundary conditions]\label{def_magnetic_graph_Dirichlet}
The magnetic Schrödinger form with Dirichlet boundary conditions $Q^{(D)}_{\Phi,b,W}$ is the closure of the restriction of $Q^{(N)}_{\Phi,b,W}$ to $C_c(X)$.
\end{definition}

If $F_x=\IC$ endowed with the standard inner product and $\Phi_{x,y}=1$ for all $x,y\in X$, we will suppress $\Phi$ in the index and simply write $Q^{(D)}_{b,W}$ (resp. $Q^{(N)}_{b,W})$. We may also drop other indices if they are clear from the context.\\
The interest in these forms is particularly a result of the fact that $Q^{(D)}_{b,c}$ and $Q^{(N)}_{b,c}$ are Dirichlet forms. Indeed, all regular Dirichlet forms over a discrete measure space are of the form $Q^{(D)}_{b,c}$ for some graph $(X,b,c)$ (cf. \cite{KL12}, Lemma 2.2). This is one motivation to study also graphs that are not locally finite.

As a next step to establish criteria for $Q^{(N)}_\Phi=Q^{(D)}_\Phi$ we will show that the form with magnetic field is dominated by the non-magnetic form. First we prove an easy technical lemma.

\begin{lemma}\label{inequality_sgn}
Let $V$ be a Hilbert space, $a,b\in V$, and $\alpha,\beta\geq 0$ with $\alpha\leq\norm{a},\,\beta\leq\norm{b}$. Define 
\begin{align*}
\tilde a=\begin{cases}\frac{\alpha}{\norm a}a&\colon a\neq 0\\0&\colon a=0\end{cases}
\end{align*}
and likewise $\tilde b$.\\
Then
\begin{align*}
\norm{\tilde a-\tilde b}^2\leq\abs{\alpha-\beta}^2+\norm{a-b}^2.
\end{align*}
\end{lemma}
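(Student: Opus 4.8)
The plan is to reduce the claim to a single scalar inequality by expanding both squared norms through the inner product, and then to control the resulting cross term using the Cauchy--Schwarz inequality together with the hypotheses $\alpha\le\norm a$ and $\beta\le\norm b$.

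First I would dispose of the degenerate cases. If $a=0$, then $\alpha\le\norm a$ forces $\alpha=0$ and hence $\tilde a=0$; in this case the left-hand side equals $\norm{\tilde b}^2=\beta^2$, while the right-hand side is $\beta^2+\norm b^2\ge\beta^2$, so the inequality holds. The case $b=0$ is symmetric. Hence I may assume $a\ne0$ and $b\ne0$ and abbreviate $A=\norm a$, $B=\norm b$, $c=\langle a,b\rangle$.

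In this main case one has $\norm{\tilde a}=\alpha$, $\norm{\tilde b}=\beta$, and $\langle\tilde a,\tilde b\rangle=\tfrac{\alpha\beta}{AB}c$, so expanding gives
\[
\norm{\tilde a-\tilde b}^2=\alpha^2+\beta^2-\tfrac{2\alpha\beta}{AB}c,\qquad \abs{\alpha-\beta}^2+\norm{a-b}^2=\alpha^2+\beta^2-2\alpha\beta+A^2+B^2-2c.
\]
Subtracting, the claim becomes equivalent to
\[
A^2+B^2-2\alpha\beta-2c\Bigl(1-\tfrac{\alpha\beta}{AB}\Bigr)\ge 0.
\]
Here the coefficient $1-\tfrac{\alpha\beta}{AB}$ is nonnegative, since $\alpha\le A$ and $\beta\le B$ give $\tfrac{\alpha\beta}{AB}\le 1$. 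I would then invoke Cauchy--Schwarz in the form $c\le AB$ to bound the cross term from below by $-2c(1-\tfrac{\alpha\beta}{AB})\ge-2AB(1-\tfrac{\alpha\beta}{AB})=-2AB+2\alpha\beta$. Substituting this estimate collapses the expression to $A^2+B^2-2AB=(A-B)^2\ge 0$, which finishes the argument.

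The only point requiring care, which I regard as the crux, is getting the direction of the Cauchy--Schwarz estimate right: the bound $c\le AB$ is applied to the term $-2c(1-\tfrac{\alpha\beta}{AB})$ precisely because the coefficient multiplying $c$ has a definite (nonnegative) sign, and it is exactly here that the hypotheses $\alpha\le\norm a$ and $\beta\le\norm b$ enter. Everything else is routine expansion.
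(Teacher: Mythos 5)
Your proof is correct and follows essentially the same route as the paper's: both expand the squared norms, use $\alpha\beta\le\norm{a}\norm{b}$ together with Cauchy--Schwarz to control the sign of the cross term, and finish with $2\norm{a}\norm{b}\le\norm{a}^2+\norm{b}^2$. One small caveat: since $V$ may be complex (the lemma is applied to the fibers $F_x$ of a Hermitian vector bundle), the cross terms in your expansions should read $\Re\langle\cdot,\cdot\rangle$ rather than $\langle\cdot,\cdot\rangle$; taking $c=\Re\langle a,b\rangle$, for which Cauchy--Schwarz still gives $c\le\norm{a}\norm{b}$, your argument goes through verbatim.
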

\begin{proof}
If $a=0$ or $b=0$, the inequality is obvious. Hence assume that $a,b\neq 0$.\\
In the following computation we use the inequality $2\lambda\mu\leq\lambda^2+\mu^2$ for $\lambda,\mu\in\IR$.
\begin{align*}
\norm{\tilde a-\tilde b}^2&=\norm{\tilde a}^2+\norm{\tilde b}^2-2\Re\langle\tilde a,\tilde b\rangle\\
&=\alpha^2+\beta ^2-2\Re\langle \tilde a,\tilde b\rangle\\
&=\abs{\alpha-\beta}^2+2\alpha\beta-\Re \langle\tilde a,\tilde b\rangle\\
&=\abs{\alpha-\beta}^2+2\frac{\alpha\beta}{\norm a\norm b} (\norm a\norm b-\Re\langle a,b\rangle)\\
&\leq \abs{\alpha-\beta}^2+ 2\norm{a}\norm{b}-2\Re\langle a,b\rangle\\
&\leq \abs{\alpha-\beta}^2+\norm{a}^2+\norm{b}^2-2\Re\langle a,b\rangle\\
&=\abs{\alpha-\beta}^2+\norm{a-b}^2\qedhere
\end{align*}
\end{proof}

We will now prove that the magnetic form is dominated by the form without magnetic field. In the form of a pointwise Kato's inequality this result was given in \cite{MT15}, Lemma 3.3.

\begin{proposition}\label{domination_magnetic_forms}
Assume that $\langle W(x)u(x),u(x)\rangle_x\geq c(x)\abs{u(x)}^2$ for all $x\in X,\,u(x)\in F_x$. Then $Q^{(N)}_{\Phi,b,W}$ is dominated by $Q^{(N)}_{b,c}$.
\end{proposition}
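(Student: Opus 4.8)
The plan is to verify the criterion of Corollary \ref{cor_domination} for the symmetrization $S\colon\ell^2(X,m;F)\lra\ell^2(X,m)^+$, $(Su)(x)=\abs{u(x)}_x$, with $\a=Q^{(N)}_{\Phi,b,W}$ and $\b=Q^{(N)}_{b,c}$ (both closed and nonnegative, hence associated to self-adjoint nonnegative operators). This reduces the claim to two tasks: first, that $D(Q^{(N)}_{\Phi,b,W})$ is a generalized ideal of $D(Q^{(N)}_{b,c})$, and second, that $\Re Q^{(N)}_{\Phi,b,W}(u,\tilde u)\geq Q^{(N)}_{b,c}(\abs u,\abs{\tilde u})$ for all $u,\tilde u\in D(Q^{(N)}_{\Phi,b,W})$ with $\langle u(x),\tilde u(x)\rangle_x=\abs{u(x)}_x\abs{\tilde u(x)}_x$ for all $x$. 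Throughout I would use that $\Phi_{x,y}$ is unitary, so $\abs{\Phi_{x,y}u(y)}_x=\abs{u(y)}_y$.

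For the generalized ideal property I would use the concrete description for vector bundles recorded in the example preceding the definition of domination of forms. To see that $u\in D(Q^{(N)}_{\Phi,b,W})$ implies $\abs u\in D(Q^{(N)}_{b,c})$, apply the reverse triangle inequality edgewise, $\abs{u(x)-\Phi_{x,y}u(y)}_x\geq\bigl\lvert\,\abs{u(x)}_x-\abs{u(y)}_y\,\bigr\rvert$, together with the hypothesis $\langle W(x)u(x),u(x)\rangle_x\geq c(x)\abs{u(x)}^2$; summing gives $Q^{(N)}_{b,c}(\abs u)\leq Q^{(N)}_{\Phi,b,W}(u)<\infty$. For the second condition, given $u\in D(Q^{(N)}_{\Phi,b,W})$ and $v\in D(Q^{(N)}_{b,c})^+$ with $v\leq\abs u$, I would set $f=v\sgn u$ and estimate its form value. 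Since $f(x)=\tfrac{v(x)}{\abs{u(x)}}u(x)$ is a contraction scaling of $u(x)$, positivity of $W$ bounds the potential term by that of $u$. For the kinetic term I would apply Lemma \ref{inequality_sgn} on the fibre $F_x$ to the vectors $u(x)$ and $\Phi_{x,y}u(y)$ with the scalars $\alpha=v(x)$ and $\beta=v(y)$ (the hypotheses $\alpha\leq\abs{u(x)}_x$, $\beta\leq\abs{u(y)}_y$ hold because $v\leq\abs u$), which yields $\abs{f(x)-\Phi_{x,y}f(y)}_x^2\leq\abs{v(x)-v(y)}^2+\abs{u(x)-\Phi_{x,y}u(y)}_x^2$; summing against $b(x,y)$ gives finiteness from $v\in D(Q^{(N)}_{b,c})$ and $u\in D(Q^{(N)}_{\Phi,b,W})$, so $f\in D(Q^{(N)}_{\Phi,b,W})$.

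For the form inequality I would expand both sesquilinear forms and argue termwise. The pairing condition $\langle u(x),\tilde u(x)\rangle_x=\abs{u(x)}_x\abs{\tilde u(x)}_x$ is equality in Cauchy-Schwarz with a nonnegative inner product, hence $\tilde u(x)=t_x u(x)$ for some $t_x\geq 0$ whenever $u(x)\neq 0$ (and both sides below vanish when $u(x)=0$); then $\langle W(x)u(x),\tilde u(x)\rangle_x=t_x\langle W(x)u(x),u(x)\rangle_x\geq t_x c(x)\abs{u(x)}^2=c(x)\abs{u(x)}\abs{\tilde u(x)}$, which handles the potential terms. For the kinetic terms I would prove the edgewise bound
\begin{align*}
\Re\langle u(x)-\Phi_{x,y}u(y),\tilde u(x)-\Phi_{x,y}\tilde u(y)\rangle_x\geq\bigl(\abs{u(x)}_x-\abs{u(y)}_y\bigr)\bigl(\abs{\tilde u(x)}_x-\abs{\tilde u(y)}_y\bigr).
\end{align*}
Here the two diagonal contributions equal $\abs{u(x)}\abs{\tilde u(x)}$ and $\abs{u(y)}\abs{\tilde u(y)}$ by the pairing condition and unitarity, while the two cross terms are bounded below by $-\abs{u(x)}\abs{\tilde u(y)}$ and $-\abs{u(y)}\abs{\tilde u(x)}$ via Cauchy-Schwarz and unitarity; the right-hand side is exactly the product appearing in $Q^{(N)}_{b,c}(\abs u,\abs{\tilde u})$. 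Summing over edges against $b(x,y)\geq 0$ and adding the potential estimate gives the desired inequality.

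The main obstacle is the verification that $v\sgn u$ stays in the form domain, i.e.\ the second half of the generalized ideal condition: this is precisely where the magnetic connection $\Phi$ interferes with the pointwise normalization, and it is exactly the difficulty that Lemma \ref{inequality_sgn} is tailored to resolve. The remaining analytic points---interchanging $\Re$ with the sums and summing the edgewise inequalities---are justified by absolute summability, which follows from the finiteness of the form values via Cauchy-Schwarz on $\ell^2(X,m)$.
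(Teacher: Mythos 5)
Your proposal is correct and follows essentially the same route as the paper's proof: reduction via Corollary \ref{cor_domination}, the reverse triangle inequality (using unitarity of $\Phi$) for $\abs{u}\in D(Q^{(N)}_{b,c})$, Lemma \ref{inequality_sgn} applied fibrewise to $a=u(x)$, $b=\Phi_{x,y}u(y)$, $\alpha=v(x)$, $\beta=v(y)$ for the generalized ideal property, and the same edgewise inequality $\Re\langle u(x)-\Phi_{x,y}u(y),\tilde u(x)-\Phi_{x,y}\tilde u(y)\rangle_x\geq(\abs{u(x)}-\abs{u(y)})(\abs{\tilde u(x)}-\abs{\tilde u(y)})$ for the form inequality. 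If anything, you are slightly more careful than the paper, since you spell out the potential-term estimate $\Re\langle W(x)u(x),\tilde u(x)\rangle_x\geq c(x)\abs{u(x)}\abs{\tilde u(x)}$ via the Cauchy--Schwarz equality case, which the paper leaves implicit.
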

\begin{proof}
By the characterization of domination, Corollary \ref{cor_domination},
it suffices to show that $D(Q^{(N)}_{\Phi,b,W})$ is a generalized ideal in $D(Q^{(N)}_{b,c})$ and that
\begin{align*}
\Re Q^{(N)}_{\Phi,b,W}(u,\tilde u)\geq Q^{(N)}_{b,c}(\abs u,\abs {\tilde u})
\end{align*}
holds for all $u,\tilde u\in D(Q^{(N)}_{\Phi,b,W})$ such that $\langle u(x),\tilde u(x)\rangle_x=\abs{u(x)}\abs{\tilde u(x)}$ for all $x\in X$.\\
First, let $u\in D(Q^{(N)}_{\Phi,b,W})$. Then $\abs u\in\ell^2(X,m)$ and
\begin{align*}
\tilde Q_{\Phi,b,W}(u)&=\frac 1 2\sum_{x,y}b(x,y)\abs{u(x)-\Phi_{x,y}u(y)}^2+\sum_x\langle W(x)u(x),u(x)\rangle\\
&\geq \frac 1 2\sum_{x,y}b(x,y)\abs{\abs{u(x)}-\abs{u(y)}}^2+\sum_x c(x)\abs{u(x)}^2\\
&=\tilde Q_{b,c}(\abs u),
\end{align*}
hence $\abs u\in D(Q^{(N)}_{b,c})$.\\
Next let $v\in D(Q^{(N)}_{b,c})$ with $0\leq v\leq\abs u$. Obviously, $\norm{v\sgn u}_{\ell^2}\leq\norm v_{\ell^2}$, thus $v\sgn u\in\ell^2(X,m;F)$.\\
Applying Lemma \ref{inequality_sgn} to $V=F_x,a=u(x),b=\Phi_{x,y}u(y),\alpha=v(x),\beta=v(y)$, we obtain
\begin{align*}
\abs{v(x)\sgn u(x)-\Phi_{x,y}v(y)\sgn u(y)}^2\leq\abs{v(x)-v(y)}^2+\abs{u(x)-\Phi_{x,y}u(y)}^2.
\end{align*}
Summation over $x,y$ implies
\begin{align*}
\tilde Q_{\Phi,b,0}(v\sgn u)\leq  Q^{(N)}_{b,0}(v)+Q^{(N)}_{\Phi,b,0}(u).
\end{align*}
Furthermore,
\begin{align*}
\sum_x\langle W(x)v(x)\sgn u(x),v(x)\sgn u(x)\rangle&\leq\sum_x \abs{u(x)}^2\langle{W(x)\sgn u(x),\sgn u(x)}\rangle\\
&=\sum_x \langle W(x)u(x),u(x)\rangle,
\end{align*}
hence
\begin{align*}
\tilde Q_{\Phi,b,W}(v\sgn u)\leq Q^{(N)}_{b,0}(v)+Q^{(N)}_{\Phi,b,0}(u)+\sum_x c(x)\abs{u(x)}^2\leq Q^{(N)}_{b,c}(v)+Q^{(N)}_{\Phi,b,W}(u),
\end{align*}
that is, $v\sgn u\in D(Q^{(N)}_{\Phi,b,W})$.\\
Let $u,\tilde u\in D(Q^{(N)}_{\Phi,b,W})$ such that $\langle u(x),\tilde u(x)\rangle_x=\abs{u(x)}\abs{\tilde u(x)}$ for all $x\in X$. Then we have
\begin{align*}
&\Re\langle u(x)-\Phi_{x,y}u(y),\tilde u(x)-\Phi_{x,y}\tilde u(x)\rangle\\
={}&\Re(\langle u(x),\tilde u(x)\rangle-\langle u(x),\Phi_{x,y}\tilde u(y)\rangle-\langle\Phi_{x,y} u(y),\tilde u(x)\rangle+\langle u(y),\tilde u(y)\rangle)\\
={}&\abs{u(x)}\abs{\tilde u(x)}+\abs{u(y)}\abs{\tilde u(y)}-\Re\langle u(x),\Phi_{x,y}\tilde u(y)\rangle-\Re\langle\Phi_{x,y} u(y),\tilde u(x)\rangle\\
\geq{}& \abs{u(x)}\abs{\tilde u(x)}+\abs{u(y)}\abs{\tilde u(y)}-\abs{u(x)}\abs{\tilde u(y)}+\abs{u(y)}\abs{\tilde u(x)}\\
={}&(\abs{u(x)}-\abs{u(y)})(\abs{\tilde u(x)}-\abs{\tilde u(y)}):
\end{align*}
After multiplication with $b(x,y)$ and summation over $x,y\in X$ we get
\begin{align*}
\Re Q^{(N)}_{\Phi,b, W}(u,\tilde u)\geq Q^{(N)}_{b,c}(\abs u,\abs{\tilde u}).&\qedhere
\end{align*}
\end{proof}

\begin{corollary}
The form $Q_{b,c}^{(N)}$ 
is dominated by $Q^{(N)}_{b,0}$. 
\end{corollary}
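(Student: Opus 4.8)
The plan is to obtain this corollary as the non-magnetic scalar specialization of Proposition \ref{domination_magnetic_forms}. Concretely, I would take $F$ to be the trivial line bundle, $F_x=\IC$ for every $x\in X$, equipped with the trivial connection $\Phi_{x,y}=1$, and I would take the bundle endomorphism $W$ to be multiplication by the killing term, $W(x)=c(x)$. With these choices the magnetic form $Q^{(N)}_{\Phi,b,W}$ is exactly the scalar form $Q^{(N)}_{b,c}$, since suppressing the trivial $\Phi$ and identifying $W$ with multiplication by $c$ reproduces the defining expression $\tilde Q_{b,c}(u)=\frac12\sum_{x,y}b(x,y)\abs{u(x)-u(y)}^2+\sum_x c(x)\abs{u(x)}^2$.

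First I would check that this $W$ meets the standing positivity assumption of the section: since $c(x)\geq 0$, one has $\langle W(x)v,v\rangle_x=c(x)\abs{v}^2\geq 0$ for all $v\in F_x$, so $W$ is pointwise positive and the proposition is indeed applicable to this data.

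Next I would verify the hypothesis of Proposition \ref{domination_magnetic_forms} with the dominating killing term chosen to be $0$, so that the dominating form is $Q^{(N)}_{b,0}$. The required pointwise inequality reads $\langle W(x)u(x),u(x)\rangle_x\geq 0\cdot\abs{u(x)}^2$, which is simply $c(x)\abs{u(x)}^2\geq 0$ and holds trivially by $c\geq 0$. Proposition \ref{domination_magnetic_forms} then yields that $Q^{(N)}_{\Phi,b,W}=Q^{(N)}_{b,c}$ is dominated by $Q^{(N)}_{b,0}$, which is the claim.

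I do not expect any genuine obstacle here: the statement is a direct instance of the preceding proposition, and the only point requiring (routine) care is the bookkeeping that the scalar form $Q^{(N)}_{b,c}$ is literally the magnetic form attached to the trivial bundle with trivial connection, together with the observation that the dominating killing term is allowed to be taken equal to zero.
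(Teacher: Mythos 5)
Your proof is correct and is essentially identical to the paper's own argument: the paper also deduces the corollary from Proposition \ref{domination_magnetic_forms} by taking $F_x=\IC$, $\Phi_{x,y}=1$ and $W(x)=c(x)$. You have merely made explicit the bookkeeping the paper leaves implicit, namely that the underlying graph must be taken with killing term $0$ so that the hypothesis of the proposition reduces to $c(x)\abs{u(x)}^2\geq 0$, which is a worthwhile clarification but not a different route.
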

\begin{proof}
This follows from Proposition \ref{domination_magnetic_forms} by taking $F_x=\IC$, $W(x)=c(x)$ and $\Phi_{x,y}=1$ for all $x,y\in X$.
\end{proof}

Having proven the domination property, the announced main result of this section is now an easy consequence of Corollary \ref{corollary_uniqueness_forms}. In a very informal way it says that adding a magnetic and electric field does not disturb the form uniqueness.

\begin{theorem}\label{form_uniqueness_graphs}
Assume that $\langle W(x)u(x),u(x)\rangle\geq c(x)\abs{u(x)}^2$ for all $x\in X,\,u(x)\in F_x$. If $Q^{(D)}_{b,c}=Q^{(N)}_{b,c}$, then $Q^{(D)}_{\Phi,b,W}=Q^{(N)}_{\Phi,b,W}$.
\end{theorem}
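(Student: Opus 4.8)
The plan is to deduce the theorem directly from Corollary \ref{corollary_uniqueness_forms}, applied with the discrete space $X$, the measure $m$, the vector bundle $F$, the dominating form $Q^{(N)}_{b,c}$ on $\ell^2(X,m)$ and the dominated form $Q^{(N)}_{\Phi,b,W}$ on $\ell^2(X,m;F)$. The first thing I would record is that on a discrete space a finitely supported function is automatically essentially bounded, so that $L^\infty_c(X,m)=C_c(X)$ and, likewise, $L^\infty_c(X,m;F)=\Gamma_c(X;F)$. This identification is exactly what matches the abstract cores appearing in Corollary \ref{corollary_uniqueness_forms} with the compactly supported (sections) used in Definition \ref{def_magnetic_graph_Dirichlet}.

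I would then verify the hypotheses of the corollary one at a time. Both forms are closed: every Neumann form $Q^{(N)}$ is closed by the lemma preceding Definition \ref{def_magnetic_graph_Dirichlet}. The standing pointwise bound $\langle W(x)u(x),u(x)\rangle_x\geq c(x)\abs{u(x)}^2$ is precisely the hypothesis of Proposition \ref{domination_magnetic_forms}, which therefore gives that $Q^{(N)}_{\Phi,b,W}$ is dominated by $Q^{(N)}_{b,c}$. Finally, the assumption $Q^{(D)}_{b,c}=Q^{(N)}_{b,c}$ is nothing but the statement that $C_c(X)$ is a form core for $Q^{(N)}_{b,c}$: by Definition \ref{def_magnetic_graph_Dirichlet} the form $Q^{(D)}_{b,c}$ is the closure of the restriction of $Q^{(N)}_{b,c}$ to $C_c(X)$ (so in particular $C_c(X)\subset D(Q^{(N)}_{b,c})$, which follows from the summability condition (b3)), and the two forms agree precisely when this restriction is dense in $D(Q^{(N)}_{b,c})$ for the form norm. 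Hence $D(Q^{(N)}_{b,c})\cap L^\infty_c(X,m)=C_c(X)$ is a form core for $Q^{(N)}_{b,c}$.

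With every hypothesis in place, Corollary \ref{corollary_uniqueness_forms} yields that $D(Q^{(N)}_{\Phi,b,W})\cap L^\infty_c(X,m;F)=\Gamma_c(X;F)$ is a form core for $Q^{(N)}_{\Phi,b,W}$. Since $Q^{(D)}_{\Phi,b,W}$ is by definition the closure of the restriction of $Q^{(N)}_{\Phi,b,W}$ to $\Gamma_c(X;F)$, the statement that $\Gamma_c(X;F)$ is a form core is exactly the desired equality $Q^{(D)}_{\Phi,b,W}=Q^{(N)}_{\Phi,b,W}$. I do not anticipate a genuine obstacle in this argument, since all of the analytic content has already been isolated in the domination estimate (Proposition \ref{domination_magnetic_forms}) and in the abstract transfer theorem behind Corollary \ref{corollary_uniqueness_forms}; the only points demanding care are the bookkeeping identifications $L^\infty_c=C_c$ and $L^\infty_c(\,\cdot\,;F)=\Gamma_c$ on the discrete space, and the verification that these compactly supported (sections) indeed lie in the relevant form domains.
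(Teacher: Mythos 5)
Your proof is correct and takes essentially the same route as the paper: both deduce the theorem from Corollary \ref{corollary_uniqueness_forms} combined with the domination statement of Proposition \ref{domination_magnetic_forms}, using that on a discrete space $L^\infty_c(X,m)=C_c(X)$ and $L^\infty_c(X,m;F)=\Gamma_c(X;F)$, and that $Q^{(D)}_{b,c}=Q^{(N)}_{b,c}$ is precisely the statement that $C_c(X)$ is a form core for $Q^{(N)}_{b,c}$. The only cosmetic difference is that the paper additionally exhibits, for $v\in C_c(X)$, a section $u:=v e_1\in\Gamma_c(X;F)$ with $\abs{u}=\abs{v}$ (a pairing-type condition inherited from Theorem \ref{uniqueness_forms}), which in the corollary as finally stated is verified inside its own proof, so omitting it as you do is harmless.
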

\begin{proof}
The form $Q^{(D)}_{b,c}$ is a regular Dirichlet form so that we only have to check assumptions (a) and (b) of Corollary \ref{corollary_uniqueness_forms}.\\
Assumption (a): For $v\in C_c(X)$ define $u:=v e_1\in \Gamma_c(X;F)$. Then $\abs u=\abs v$.\\
Assumption (b): By Proposition \ref{domination_magnetic_forms}, $Q^{(N)}_{\Phi,b,W}$ is dominated by $Q^{(N)}_{b,c}$.
\end{proof}

There are quite a few conditions under which $Q^{(D)}_{b,c}=Q^{(N)}_{b,c}$ holds. The first were phrased in terms of the measure $m$ and the combinatorial graph structure:

\begin{example}
If $\tilde L C_c(X)\subset \ell^2(X,m)$ and $\sum_{n=1}^\infty m(x_n)=\infty$ for any sequence $(x_n)$ in $X$ such that $b(x_n,x_{n+1})>0$ for all $n\in \IN$, then $L_0:=\tilde L|_{C_c(X)}$ is essentially self-adjoint. In particular, if $\inf_{x\in X}m(x)>0$, both assumptions hold (cf. \cite{KL12}, Theorem 6).\\
Consequently, all self-adjoint extensions of $L_0$ coincide and therefore also all form extensions of $Q^{(D)}$.
\end{example}

It turned out that the concept of intrinsic pseudo metrics (cf. \cite{FLW} for a discussion of intrinsic metrics for non-local Dirichlet forms in general) provides a suitable framework for many conditions for uniqueness of form extensions.

A pseudo metric $d\colon X\times X\lra [0,\infty)$ is called intrinsic if
\begin{align*}
\frac 1{m(x)}\sum_ yb(x,y) d(x,y)^2\leq 1
\end{align*}
for all $x\in X$.

A pseudo metric $d$ is said to have finite jump size if there is an $s\in \IR$ such that $b(x,y)=0$ for all $x,y\in X$ with $d(x,y)>s$.

A pseudo metric is called a path pseudo metric if there is a function $\sigma\colon X\times X\lra[0,\infty)$, satisfying $\sigma(x,y)=\sigma(y,x)$ and $\sigma(x,y)>0$ iff $b(x,y)>0$ for all $x,y\in X$, such that
\begin{align*}
d(x,y)=d_\sigma(x,y):=\inf_{\gamma}\sum_{k=1}^n \sigma(x_{k-1},x_k)
\end{align*}
where the infimum is taken over all paths $(x_0,\ldots,x_n)$ connecting $x$ and $y$.

An intrinsic path pseudo metric $d_\sigma$ is called strongly intrinsic if
\begin{align*}
\frac 1{m(x)}\sum_y b(x,y)\sigma(x,y)^2\leq 1
\end{align*}
for all $x\in X$.

The following conditions are taken from \cite{HKMW}, Theorem 1 and 2. Further examples can be found there.

\begin{example}
Let $d$ be an intrinsic pseudo metric on $(X,b,0,m)$. If the weighted degree function 
\begin{align*}
\Deg\colon X\lra [0,\infty),\,\Deg(x)=\frac 1 {m(x)}\left(\sum_{y\in X}b(x,y)+c(x)\right)
\end{align*}
is bounded on the combinatorial neighborhood of each distance ball, then $Q^{(D)}=Q^{(N)}$.
\end{example}

\begin{example}
If $(X,b,0,m)$ is locally finite and there is an intrinsic path metric $d$ such that $(X,d)$ is metrically complete, then $L_0$ is essentially self-adjoint and consequently $Q^{(D)}=Q^{(N)}$.
\end{example}

The following condition is given in \cite{HL15}, Lemma 2.5, and \cite{AT15}, Theorem 1. Its connection to intrinsic metrics is discussed in \cite{HL15}, Theorem 2.7.
\begin{example}
If the graph is \emph{complete}, then $Q^{(D)}=Q^{(N)}$. Here, completeness means that there is a non-decreasing sequence $(\eta_k)$ in $C_c(X)$ such that $\eta_k\to 1$ pointwise and
\begin{align*}
\frac 1{m(x)}\sum_y b(x,y)\abs{\eta_k(x)-\eta_k(y)}^2\leq \frac 1 k
\end{align*}
for all $x\in X,\,k\in\IN$.\\
That $Q^{(D)}_{\Phi,b,W}=Q^{(N)}_{\Phi,b,W}$ holds under this condition seems to be new.
\end{example}

\subsection{Magnetic Schrödinger forms on domains in Euclidean space}\label{Magnetic Schrödinger operators in Euclidean space}
In this section we will show that coincidence of the minimal and maximal form for the Laplacian defined below implies the coincidence of minimal and maximal form of a magnetic Schrö\-dinger operator on domains in Eu\-clid\-e\-an space. These operators and questions of uniqueness of extension have been extensively studied; for background information we refer the reader to \cite{Kat72, Sim79, HS04}. Of course, this list is not comprehensive and probably not even representative at all.

Contrary to the discrete case treated in the last section, some more work has to be done in order to derive the uniqueness result, Theorem \ref{uniqueness_magnetic_forms}, from Corollary \ref{uniqueness_smooth}. This is due to the fact that for a form on a domain $\Omega\subset \IR^n$, smooth inner regularity is a non-void condition.

In the first part of this section we will define the basic objects of our interest, magnetic Schrödinger forms (Definitions \ref{def_magnetic_form_Neumann} and \ref{def_magnetic_form_Dirichlet}), and establish that they are closed. In the second part we show that the Schrödinger form with magnetic field is dominated by the Schrödinger form without magnetic field (Proposition \ref{domination_domains}). In the last part we present the announced uniqueness result (Theorem \ref{uniqueness_magnetic_forms}) for magnetic forms on Euclidean domains and give some examples.

Mirroring these technical difficulties, there is some more notation to fix than in the last sections: 

In the following, $\Omega\subset \IR^n$ shall denote a domain. For a subset $K\subset\Omega$ we write $K\subset\subset \Omega$ if $\overline K$ is compact and contained in $\Omega$.\\
If $f$ is weakly differentiable, then $\nabla f=(\partial_1 f,\ldots,\partial_n f)$ denotes the vector valued function with the weak derivatives of $f$ as components.

We will use the following notations for function spaces: 
\begin{itemize}
\item $C_c^\infty(\Omega)$, the space of smooth, compactly supported functions on $\Omega$,
\item $\D^\prime(\Omega)$, the space of distributions on $\Omega$,
\item $L^p_{\loc}(\Omega)=\{u\colon\Omega\lra\IC\mid u\text{ measurable}, u|_K\in L^p(K)\text{ for all }K\subset\subset \Omega\}$,
\item $L^p_{c}(\Omega)=\{u\in L^p(\Omega)\mid u|_{K^c}=0\text{ for some }K\subset\subset \Omega\}$,
\item $W^{1,p}(\Omega)=\{u\in L^p(\Omega)\mid\nabla u\in L^p(\Omega;\IC^n)\}$,
\item $W^{1,p}_{\loc}(\Omega)=\{u\in L^p_{\loc}(\Omega)\mid \nabla u\in L^p_{\loc}(\Omega;\IC^n)\}$.
\end{itemize}

In addition, the corresponding vector- and matrix-valued spaces will be denoted by $L^p_{\loc}(\Omega;\IC^n), L^p_{\loc}(\Omega;M_n(\IR))$ etc. By $M_n(\IR)^+$ we denote the set of real-valued, symmetric, positive matrices.

\begin{definition}[Magnetic form with Neumann boundary conditions]\label{def_magnetic_form_Neumann}
Let $a\in L^\infty_{\loc}(\Omega;M_n(\IR)^+)$, $b\in L^2_\loc(\Omega;\IR^n)$, $V\in L^1_\loc(\Omega)^+$. Let $D_k\colon L^2(\Omega)\lra \D^\prime(\Omega),\,D_k u=\partial_k u-i b_k u$, $Du=(D_1 u,\ldots,D_n u)$. Assume that for every $K\subset\subset \Omega$ there is a constant $\mu_K>0$ such that $\mu_K 1_n\leq a(x)$ for almost all $x\in K$.\\
Define the magnetic form with Neumann boundary conditions via
\begin{align*}
D(\E_{a,b,V}^{(N)})&=\{u\in L^2(\Omega)\mid a^{\frac 1 2}Du\in L^2(\Omega;\IC^n), V^{\frac 1 2}u\in L^2(\Omega)\},\\
\E_{a,b,V}^{(N)}(u,v)&= \sum_{j,k=1}^n\int_\Omega a_{jk}(D_k u)\overline{(D_k v)}\,dx+\int_\Omega Vu\overline v\, dx.
\end{align*}
The form norm of $\E^{(N)}_{a,b,V}$ will be denoted by $\norm\cdot_{a,b,V}$.
\end{definition}

For the remainder of the section we shall always assume the regularity assumptions on $a$ and $b$ without further mentioning it. The regularity assumption on $V$ will be weakened later in order to allow negative potentials $V$. We will denote by $1$ the constant function $\Omega\to M_n(\IR), x\mapsto 1_n$ (the identity matrix).

\begin{remark}
\begin{itemize}
\item If the coefficients are sufficiently regular, the form $\E_{a,b,V}$ can be viewed as form associated with the differential expression
\begin{align*}
\tau=-\sum_{j,k}(\partial_k-ib_k)a_{jk}(\partial_i-ib_i)+V.
\end{align*}
However, the approach via forms allows us to handle the case of coefficients that are not (weakly) differentiable and for which the expression $\tau$ makes no immediate sense.
\item The expression $\tau$ is elliptic with principal part
\begin{align*}
-\sum_{j,k}a_{jk}\partial_j\partial_k.
\end{align*}
The condition $a\geq \mu_K$ on compact subsets $K\subset \Omega$ then translates to the condition of $\tau$ being locally strongly elliptic.
\item In quantum mechanics, the expression $\E_{1,b,V}(\psi)$ is the energy of a particle with wave function $\psi$ in an electric field with potential $V$ and a magnetic field with magnetic potential $b$.
\end{itemize}
\end{remark}
We will prove that the forms $\E^{(N)}_{a,b,V}$ are closed in several steps by ``turning on'' the fields $b,a,V$ successively.

\begin{lemma}\label{magnetic_form_closed}
The form $\E^{(N)}_{1,b,0}$ is closed.
\end{lemma}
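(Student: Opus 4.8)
The plan is to verify closedness directly from the definition: if $(u_n)$ is Cauchy with respect to the form norm $\norm{\cdot}_{1,b,0}$, then it converges in $L^2(\Omega)$ to some $u$, and I must show $u \in D(\E^{(N)}_{1,b,0})$ with $\norm{u_n - u}_{1,b,0} \to 0$. Since $a = 1$ and $V = 0$ here, the form is simply $\E^{(N)}_{1,b,0}(u) = \int_\Omega \abs{Du}^2\,dx$ where $Du = (D_1 u, \ldots, D_n u)$ and $D_k u = \partial_k u - i b_k u$. So the form norm controls $\norm{u}_{L^2}^2 + \norm{Du}_{L^2}^2$ (after adding the requisite $(1+\lambda)\norm{u}^2$, though here the form is nonnegative so $\lambda = 0$ works). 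First I would observe that Cauchyness in the form norm means both $(u_n)$ is Cauchy in $L^2(\Omega)$ and $(Du_n)$ is Cauchy in $L^2(\Omega;\IC^n)$. By completeness of $L^2$, there exist $u \in L^2(\Omega)$ and $w \in L^2(\Omega;\IC^n)$ with $u_n \to u$ and $Du_n \to w$ in the respective $L^2$-norms.

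The crux is to identify $w = Du$, i.e. to show that the limit $w$ is actually the magnetic gradient of the limit $u$. The key step will be to pass to the limit in the distributional identity defining $D_k$. For any test function $\phi \in C_c^\infty(\Omega)$ and each $k$, I would write out $\langle D_k u_n, \phi \rangle = \langle \partial_k u_n, \phi\rangle - \langle i b_k u_n, \phi\rangle = -\langle u_n, \partial_k \phi\rangle - i\langle b_k u_n, \phi\rangle$. Now let $n \to \infty$: the first term converges to $-\langle u, \partial_k\phi\rangle$ since $u_n \to u$ in $L^2$ and $\partial_k\phi \in L^2$, while the left side converges to $\langle w_k, \phi\rangle$ since $D_k u_n \to w_k$ in $L^2$. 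The term $\langle b_k u_n, \phi\rangle$ requires care because $b_k \in L^2_\loc$ only, so $b_k\phi$ need not lie in $L^2(\Omega)$ globally; however, since $\phi$ has compact support $K \subset\subset \Omega$ and $b_k \in L^2_\loc$, we have $b_k \phi \in L^2(\Omega)$, and $u_n \to u$ in $L^2$ gives $\langle b_k u_n, \phi\rangle = \langle u_n, \overline{b_k}\phi\rangle \to \langle u, \overline{b_k}\phi\rangle = \langle b_k u, \phi\rangle$. Combining these shows $\langle w_k, \phi\rangle = -\langle u, \partial_k\phi\rangle - i\langle b_k u, \phi\rangle = \langle D_k u, \phi\rangle$ as distributions, whence $w_k = D_k u$ in $\D'(\Omega)$, and since $w_k \in L^2(\Omega)$ we conclude $D_k u \in L^2(\Omega)$ for each $k$.

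This establishes $u \in D(\E^{(N)}_{1,b,0})$ (the condition $V^{1/2}u \in L^2$ being vacuous as $V = 0$) and $Du = w$. Finally, $\norm{u_n - u}_{1,b,0}^2 = \norm{u_n - u}_{L^2}^2 + \norm{Du_n - Du}_{L^2}^2 = \norm{u_n - u}_{L^2}^2 + \norm{Du_n - w}_{L^2}^2 \to 0$ by construction of $u$ and $w$, so $u_n \to u$ in the form norm. The main obstacle is the $L^2_\loc$ regularity of $b$, which prevents treating $b_k u_n$ as an $L^2(\Omega)$ function directly; the resolution is to test only against compactly supported $\phi$, localizing the argument so that $b_k\phi \in L^2(\Omega)$ and the distributional limit can be taken cleanly. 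This is exactly the kind of step that the subsequent lemmas turning on $a$ and $V$ will build upon.
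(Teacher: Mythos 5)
Your proof is correct and follows essentially the same route as the paper: extract the $L^2$-limits $u$ and $w$ of $u_n$ and $Du_n$, then identify $w=Du$ distributionally by testing against $\phi\in C_c^\infty(\Omega)$, using that $b_k\phi\in L^2(\Omega)$ on the compact support of $\phi$ to pass to the limit in the magnetic term. The paper phrases this by showing $v+iub$ equals the weak gradient $\nabla u$ rather than showing $v=Du$ directly, but this is only a cosmetic rearrangement of the same argument.
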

\begin{proof}
Let $(u_m)$ be  a Cauchy sequence with respect to the form norm and $u_m\to u$ in $L^2(\Omega)$. Then $(D  u_m)_m$ is a Cauchy sequence in $L^2(\Omega;\IC^n)$. Hence, there is a function $v\in L^2(\Omega;\IC^n)$, such that $D u_m\to v$ in $L^2(\Omega;\IC^n)$. Define $w:=v+iu b$. Then we have for all $\phi\in C_c^\infty(\Omega)$, $j\in\{1,\dots,n\}$:
\begin{align*}
\int_\Omega w_j \phi\,dx&=\int_\Omega v_j\phi\,dx+i\int_\Omega b_j u\phi\,dx\\
&=\lim_{m\to\infty}\left(\int_\Omega D_j u_m \phi\,dx+\int_\Omega i b_j u_m \phi\,dx\right)\\
&=\lim_{m\to\infty}\int_\Omega \partial_j u_m \phi\,dx\\
&=-\lim_{n\to\infty}\int_\Omega u_m\partial_j\phi\,dx\\
&=-\int_\Omega u\partial_j\phi\,dx.
\end{align*}
Thus, $w_j=\partial_j u$ and $\partial_j u-ib_j u=v_j\in L^2$, hence $u\in D(\E^{(N)}_{b,0})$. By definition, $D_j u_m\to v_j=\partial_j u-ib_j u$, that is, $\norm{u-u_m}_{1,b,0}\to 0$.
\end{proof}

\begin{lemma}
The form $\E^{(N)}_{a,b,0}$ is closed.
\end{lemma}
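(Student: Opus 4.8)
The plan is to imitate the proof of Lemma~\ref{magnetic_form_closed}, but one cannot simply invoke it: since $a$ is only locally bounded and only locally bounded below, the domains $D(\E^{(N)}_{a,b,0})$ and $D(\E^{(N)}_{1,b,0})$ need not coincide and their form norms need not be equivalent, so I would argue locally. Let $(u_m)$ be Cauchy with respect to $\norm\cdot_{a,b,0}$ with $u_m\to u$ in $L^2(\Omega)$. Because $a\geq 0$ the form is nonnegative and $\norm{w}_{a,b,0}^2=\norm{w}_{L^2}^2+\norm{a^{\frac12}Dw}_{L^2}^2$, so the Cauchy assumption means exactly that $u_m\to u$ in $L^2(\Omega)$ and that $(a^{\frac12}Du_m)$ is Cauchy in $L^2(\Omega;\IC^n)$; let $G\in L^2(\Omega;\IC^n)$ denote its limit. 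The goal is to show $u\in D(\E^{(N)}_{a,b,0})$ with $a^{\frac12}Du=G$, which then yields $\norm{u_m-u}_{a,b,0}\to 0$ at once.

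First I would pass to local convergence of the unweighted magnetic gradients. Fix $K\subset\subset\Omega$ and let $\mu_K>0$ be as in the definition, so $\mu_K 1_n\leq a(x)$ for almost every $x\in K$. For every $w\in\IC^n$ this gives $\abs{a^{\frac12}(x)w}^2=\langle a(x)w,w\rangle\geq\mu_K\abs{w}^2$, hence $\abs{Du_m-Du_{m'}}\leq\mu_K^{-\frac12}\abs{a^{\frac12}(Du_m-Du_{m'})}$ pointwise on $K$. Integrating over $K$ shows that $(Du_m)$ is Cauchy in $L^2(K;\IC^n)$ for every $K\subset\subset\Omega$, so $Du_m\to v$ for some $v\in L^2_{\loc}(\Omega;\IC^n)$.

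Next I would identify the limit just as in Lemma~\ref{magnetic_form_closed}: for $\phi\in C_c^\infty(\Omega)$ and $j\in\{1,\dots,n\}$, using the $L^2_{\loc}$-convergence of $Du_m$ on $\supp\phi$, the convergence $u_m\to u$ in $L^2$, and $b_j\phi\in L^2(\Omega)$, one computes $\int_\Omega v_j\phi\,dx=\lim_m\int_\Omega D_j u_m\,\phi\,dx=-\int_\Omega u\,\partial_j\phi\,dx-i\int_\Omega b_j u\,\phi\,dx$, whence $\partial_j u=v_j+ib_j u$ distributionally, i.e.\ $D_j u=v_j$ and $Du=v\in L^2_{\loc}(\Omega;\IC^n)$. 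Finally, on each $K\subset\subset\Omega$ the matrix $a^{\frac12}$ is bounded (as $a\in L^\infty_{\loc}$), so $a^{\frac12}Du_m\to a^{\frac12}Du$ in $L^2(K;\IC^n)$; comparing this with the global limit $a^{\frac12}Du_m\to G$ forces $G=a^{\frac12}Du$ almost everywhere on $K$, and since $K$ was arbitrary, $a^{\frac12}Du=G\in L^2(\Omega;\IC^n)$. Hence $u\in D(\E^{(N)}_{a,b,0})$, and $\norm{u_m-u}_{a,b,0}^2=\norm{u_m-u}_{L^2}^2+\norm{a^{\frac12}Du_m-G}_{L^2}^2\to 0$.

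The main obstacle is conceptual rather than computational: because $a$ lies merely in $L^\infty_{\loc}(\Omega;M_n(\IR)^+)$ with only a local lower ellipticity bound, one obtains only $L^2_{\loc}$-convergence of $Du_m$ and must reconcile it with the genuinely global $L^2$-convergence of the weighted gradients $a^{\frac12}Du_m$. Keeping careful track of which convergences are local and which are global---and exploiting that $a^{\frac12}$ is simultaneously bounded below (locally, to pass from $a^{\frac12}Du_m$ to $Du_m$) and bounded above (locally, to pass back)---is precisely the point where the argument departs from the $a=1$ case of Lemma~\ref{magnetic_form_closed}.
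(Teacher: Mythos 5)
Your proof is correct. It shares the paper's skeleton---the local ellipticity bound $\mu_K$ converts form-norm Cauchyness into $L^2(K;\IC^n)$-Cauchyness of $(Du_m)$ on every $K\subset\subset\Omega$, and the local limit is identified distributionally with $Du$---but the two arguments close in genuinely different ways. The paper applies Lemma \ref{magnetic_form_closed} on each $K$ as a black box, patches the resulting weak gradients $\nabla u^{(K)}$ across compacta, extracts (via $\sigma$-compactness) a subsequence with $Du_m\to Du$ almost everywhere, and then invokes Fatou's lemma twice: once to get $u\in D(\E^{(N)}_{a,b,0})$ and once to get $\E^{(N)}_{a,b,0}(u-u_m)\to 0$. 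You instead give a name, $G$, to the global $L^2(\Omega;\IC^n)$-limit of the Cauchy sequence $(a^{\frac 1 2}Du_m)$---a limit the paper never introduces---and use the local upper bound $a\in L^\infty_{\loc}(\Omega;M_n(\IR)^+)$ to see that $a^{\frac 1 2}Du_m\to a^{\frac 1 2}Du$ in $L^2(K;\IC^n)$, so that $G=a^{\frac 1 2}Du$ almost everywhere by uniqueness of $L^2$-limits; domain membership and form-norm convergence then drop out at once, with no subsequences, no almost-everywhere arguments and no Fatou. Your route is the more economical one: it stays at the level of soft Hilbert-space reasoning (completeness plus uniqueness of limits) and yields the exact identity $a^{\frac 1 2}Du=G$, where the paper only obtains the $\liminf$ inequalities it needs. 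What the paper's route buys in exchange is reuse: it cites Lemma \ref{magnetic_form_closed} rather than re-deriving the distributional identification, and its Fatou pattern recurs verbatim elsewhere (for instance in the closedness proof for the graph forms $Q^{(N)}_{\Phi,b,W}$). A further small gain on your side: because all your local limits come from the single sequence $(Du_m)$, they agree on overlaps automatically, so the paper's explicit gradient-patching step becomes unnecessary.
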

\begin{proof}
Let $(u_m)$ be a Cauchy sequence with respect to $\norm\cdot_{a,b,0}$ and $u_m\to u$ in $L^2(\Omega)$. Let $K\subset\subset \Omega$ be open. Then we have
\begin{align*}
\E^{(N)}_{a,b,0}(u_m-u_l)\geq\mu_K\int_K \abs{D(u_m-u_l)}^2\,dx.
\end{align*}
By Lemma \ref{magnetic_form_closed}, there is a $u^{(K)}\in L^2(K)$ such that $Du^{(K)}\in L^2(K)$ and $\norm{u_m|_{K}-u^{(K)}}_{L^2(K)}^2+\norm{D(u_m|_{K}-u^{(K)})}_{L^2(K)}^2\to 0$. Thus, $u^{(K)}=u|_K$.\\
Let $K_1\subset K_2\subset\subset \Omega$ be open subsets and $\phi\in C_c^\infty(\Omega)$ with $\supp \phi\subset K_1$. Then we have
\begin{align*}
\int_{K_1} (\nabla u^{(K_2)}-\nabla u^{(K_1)})\phi\,dx&=\int_{K_2}\nabla u^{(K_2)}\phi\,dx-\int_{K_1}\nabla u^{(K_1)}\phi\,dx\\
&=-\int_{K_2}u\nabla \phi\,dx+\int_{K_1}u\nabla \phi\,dx\\
&=-\int_{K_1}u\nabla \phi\,dx+\int_{K_1}u\nabla \phi\,dx\\
&=0.
\end{align*}
Hence $\nabla u^{(K_2)}$ and $\nabla u^{(K_1)}$ coincide on $K_1$ and we can define $v\in L^1_{\loc}(\Omega;\IC^n)$ via $v|_K=\nabla u^{(K)}$.\\
Now let $\phi\in C_c^\infty(\Omega)$. Then there is an open subset $K\subset\subset \Omega$ with $\supp\phi\subset K$ and we have
\begin{align*}
\int_\Omega \nabla u\phi\,dx=\int_K\nabla u^{(K)}\phi\,dx=-\int_K u\nabla\phi\,dx=-\int_{\Omega}u\nabla\phi\,dx.
\end{align*}
Thus, $u$ is weakly differentiable and $\nabla u=v$.\\
As $Du_m|_k\to Du|_K$ in $L^2(K)$ for all $K\subset\subset\Omega$, there is a subsequence that converges almost everywhere in $K$, and as $\IR^n$ is $\sigma$-compact, we may assume w.\,l.\,o.\,g. that $Du_m\to Du$ almost everywhere.
By Fatou's Lemma we obtain
\begin{align*}
\int_\Omega\sum_{j,k}a_{jk}(D_j u)(\overline{D_k u})\,dx&=\int_\Omega\lim_{m\to\infty}\sum_{j,k}a_{jk}(D_j u_m)(\overline{D_k u_m})\,dx\\
&\leq \liminf_{m\to\infty}\E^{(N)}_{a,b,0}(u_m)\\
&<\infty.
\end{align*}
Thus, $u\in D(\E^{(N)}_{a,b,0})$.\\
By the same argument we have
\begin{align*}
\E^{(N)}_{a,b,0}(u-u_m)&=\int_\Omega\sum_{j,k} a_{jk}(D_j(u-u_m))(\overline{D_k(u-u_m)})\,dx\\
&=\int_\Omega \lim_{l\to\infty}\sum_{j,k}a_{jk}(D_j(u_l-u_m))(\overline{D_k(u_l-u_m)})\,dx\\
&\leq\liminf_{l\to \infty}\E^{(N)}_{a,b,0}(u_l-u_m)\\
&\to 0,\,m\to\infty.
\end{align*}
Hence, $\norm{u-u_m}_{a,b,0}\to 0$.
\end{proof}

\begin{lemma}
Let $V\in L^1_\loc(\Omega)^+$. Then the form $\E^{(N)}_{a,b,V}$ is closed.
\end{lemma}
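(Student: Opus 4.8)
The plan is to write $\E^{(N)}_{a,b,V}$ as the sum of a form that is already known to be closed and a nonnegative multiplication form. Concretely, I would set
\begin{align*}
\M_V(u)=\int_\Omega V\abs{u}^2\,dx,\qquad D(\M_V)=\{u\in L^2(\Omega)\mid V^{\frac 1 2}u\in L^2(\Omega)\},
\end{align*}
so that $\E^{(N)}_{a,b,V}=\E^{(N)}_{a,b,0}+\M_V$ on $D(\E^{(N)}_{a,b,V})=D(\E^{(N)}_{a,b,0})\cap D(\M_V)$. Since $V\in L^1_\loc(\Omega)^+$, the form $\M_V$ is nonnegative, and the preceding lemma already furnishes closedness of $\E^{(N)}_{a,b,0}$. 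The strategy is therefore to establish that $\M_V$ is closed and to combine this with the closedness of $\E^{(N)}_{a,b,0}$ via the elementary fact that a sum of nonnegative closed forms, taken on the intersection of the two domains, is again closed.

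Following the pattern of the previous proofs, I would argue directly with Cauchy sequences. Let $(u_m)$ be Cauchy with respect to $\norm\cdot_{a,b,V}$ and $u_m\to u$ in $L^2(\Omega)$. Because $V\geq 0$ gives $\E^{(N)}_{a,b,V}\geq\E^{(N)}_{a,b,0}$, the sequence $(u_m)$ is also Cauchy for $\norm\cdot_{a,b,0}$; closedness of $\E^{(N)}_{a,b,0}$ then yields $u\in D(\E^{(N)}_{a,b,0})$ together with $\norm{u_m-u}_{a,b,0}\to 0$. Separately, the identity $\M_V(u_m-u_l)=\norm{V^{\frac 1 2}(u_m-u_l)}_{L^2}^2$ shows that $(V^{\frac 1 2}u_m)$ is Cauchy in $L^2(\Omega)$, hence convergent to some $g\in L^2(\Omega)$.

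The one point requiring care is the identification $g=V^{\frac 1 2}u$. To establish it I would pass to a subsequence $(u_{m_k})$ converging to $u$ almost everywhere, which is possible since $u_m\to u$ in $L^2$; then $V^{\frac 1 2}u_{m_k}\to V^{\frac 1 2}u$ almost everywhere, while a further subsequence of $(V^{\frac 1 2}u_{m_k})$ converges almost everywhere to $g$, forcing $g=V^{\frac 1 2}u$ and in particular $u\in D(\M_V)$. Consequently $\M_V(u_m-u)=\norm{V^{\frac 1 2}u_m-g}_{L^2}^2\to 0$. Alternatively, membership $u\in D(\M_V)$ could be obtained from Fatou's lemma along the almost everywhere convergent subsequence, exactly as in the lemmas above.

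Combining the two pieces would then give $u\in D(\E^{(N)}_{a,b,0})\cap D(\M_V)=D(\E^{(N)}_{a,b,V})$ and
\begin{align*}
\norm{u_m-u}_{a,b,V}^2=\norm{u_m-u}_{a,b,0}^2+\M_V(u_m-u)\longrightarrow 0,
\end{align*}
which is the desired conclusion. The main, and essentially only genuine, obstacle is the measure-theoretic identification of the $L^2$-limit $g$ with $V^{\frac 1 2}u$; once that subsequence argument is in place, everything else is a routine recombination of the two form norms, made possible by the nonnegativity of $V$.
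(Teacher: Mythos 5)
Your proof is correct and follows essentially the same route as the paper: the paper's proof simply observes that $\E^{(N)}_{a,b,V}$ is the form sum of the closed forms $\E^{(N)}_{a,b,0}$ and $q_V(u)=\int_\Omega V\abs{u}^2\,dx$, and concludes closedness from the general fact that a sum of closed forms on the intersection of their domains is closed. Your argument unpacks precisely this fact in the concrete setting---splitting the Cauchy condition into the two nonnegative pieces and identifying the $L^2$-limit of $(V^{\frac 1 2}u_m)$ with $V^{\frac 1 2}u$ via an almost-everywhere convergent subsequence---so the two proofs differ only in the level of detail, not in substance.
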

\begin{proof}
The form $\E^{(N)}_{a,b,V}$ is the form sum of the closed forms $\E^{(N)}_{a,b,0}$ and $q_V$ given by
\begin{align*}
q_V(u)=\int_\Omega V\abs{u}^2\,dx.
\end{align*}
Hence, it is also closed.
\end{proof}

\begin{definition}[Form small potential]
Let $q$ be a closed form on $L^2(\Omega)$. A function $V\colon \Omega\lra[0,\infty)$ is called form small with respect to $q$ if there are constants $\alpha\in[0,1),\,\beta\in[0,\infty)$ such that
\begin{align*}
\int_\Omega V\abs{u}^2\,dx\leq \alpha q(u)+\beta\norm{u}_{L^2}^2
\end{align*}
for all $u\in L^2(\Omega)$. 
\end{definition}

\begin{definition}[Magnetic form with Dirichlet and Neumann boundary conditions]\label{def_magnetic_form_Dirichlet}
Let $V\colon \Omega\lra \IR$ be measurable such that $V^+\in L^1_\loc(\Omega)$ and $V^-$ is form small with respect to $\E^{(N)}_{a,0,0}$. Then the magnetic form with Neumann conditions is defined via
\begin{align*}
D(\E^{(N)}_{a,b,V})=D(\E^{(N)}_{a,b,V^+}),\,\E^{(N)}_{a,b,V}(u)=\sum_{j,k=1}^n\int_\Omega a_{jk}(D_j u)\overline{(D_k u)}\,dx+\int_\Omega V\abs{u}^2\, dx
\end{align*}
and the magnetic form with Dirichlet boundary conditions $\E^{(D)}_{a,b,V}$ as the closure of the restriction of $\E^{(N)}_{a,b,V}$ to $C_c^\infty(\Omega)$.
\end{definition}

From now on we shall always assume that $V^+\in L^1_{\loc}(\Omega)$ and that $V^-$ is form small with respect to $\E^{(N)}_{a,0,0}$ for the remainder of the section.\\
This condition on $V$ ensures that the form $\E^{(N)}_{a,b,V}$ is closed and the form $\E^{(D)}_{a,b,V}$ therefore closable by the KLMN Theorem (cf. \cite{RS2}, Thm. X.16). For a more detailed discussion we refer once again to \cite{HS04}.

For positive potential $V$, the forms without magnetic field $\E^{(N)}_{a,0,V}$ and $\E^{(D)}_{a,0,V}$ are Dirichlet forms (cf. \cite{Fu} , Examples 1.2.1 and 1.2.3) and the form with Dirichlet boundary conditions is regular by definition. This fact is one of the connections between the Euclidean case and the discrete case treated in Section \ref{Magnetic Schrödinger operators on graphs}.

Now that we have defined the Schrödinger forms, we will in a next step show that the form with magnetic field is dominated by the form without magnetic field (indeed, some variation in the potential is also allowed). To do so, we start with two technical lemmas, the first one giving some regularity results and the second one providing a product rule for weak derivatives.

\begin{lemma}\label{regularity_magnetic}
\begin{itemize}
\item[(a)]If $u\in D(\E^{(N)}_{a,0,V})$, then $\nabla u\in L^2_{\loc}(\Omega;\IC^n)$.
\item[(b)]If $u\in D(\E^{(N)}_{a,b,V})\cap L^\infty(\Omega)$, then $\nabla u\in L^2_{\loc}(\Omega;\IC^n)$.
\end{itemize}
\end{lemma}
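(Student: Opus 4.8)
The plan is to extract from the ellipticity hypothesis $\mu_K 1_n\leq a$ on compact sets a plain $L^2_\loc$ bound on $Du$, and then to recover $\nabla u$ from $Du$ via the relation $\partial_k u = D_k u + i b_k u$. Both parts hinge on the elementary observation that a real, symmetric, positive matrix $M$ with $M\geq\mu 1_n$ satisfies $\sum_{j,k}M_{jk}\xi_j\overline{\xi_k}\geq\mu\abs{\xi}^2$ for every \emph{complex} vector $\xi\in\IC^n$, not merely for real ones; this follows by splitting $\xi=\eta+i\zeta$ into real and imaginary parts and using the symmetry of $M$ to cancel the cross terms, so that $\langle M\xi,\xi\rangle=\langle M\eta,\eta\rangle+\langle M\zeta,\zeta\rangle$.

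For part (a) I would fix $K\subset\subset\Omega$ and use that $b=0$ forces $D_k u=\partial_k u$, so $Du=\nabla u$. By the definition of the form domain, $a^{\frac 1 2}\nabla u\in L^2(\Omega;\IC^n)$, hence $\int_\Omega\langle a\nabla u,\nabla u\rangle\,dx<\infty$. Applying the pointwise inequality above with $M=a(x)$, $\mu=\mu_K$ on $K$ gives
\begin{align*}
\int_K\abs{\nabla u}^2\,dx\leq\frac{1}{\mu_K}\int_K\langle a\nabla u,\nabla u\rangle\,dx\leq\frac{1}{\mu_K}\int_\Omega\langle a\nabla u,\nabla u\rangle\,dx<\infty,
\end{align*}
and since $K$ was arbitrary, $\nabla u\in L^2_\loc(\Omega;\IC^n)$.

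For part (b) the same ellipticity estimate, now applied to $Du$ rather than $\nabla u$, yields $Du\in L^2_\loc(\Omega;\IC^n)$: membership in $D(\E^{(N)}_{a,b,V})$ gives $\int_\Omega\langle aDu,Du\rangle\,dx<\infty$, whence $\int_K\abs{Du}^2\,dx\leq\mu_K^{-1}\int_\Omega\langle aDu,Du\rangle\,dx<\infty$ for each $K\subset\subset\Omega$. It then remains to pass from $Du$ to $\nabla u$ via $\partial_k u=D_k u+i b_k u$. Here the hypothesis $u\in L^\infty(\Omega)$ does the essential work: since $b_k\in L^2_\loc(\Omega)$ and $u$ is bounded, the product $b_k u$ lies in $L^2_\loc(\Omega)$, whereas $u\in L^2$ alone would only give $b_k u\in L^1_\loc$. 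Combining $D_k u\in L^2_\loc$ with $b_k u\in L^2_\loc$ shows $\partial_k u\in L^2_\loc(\Omega)$ for each $k$, i.e. $\nabla u\in L^2_\loc(\Omega;\IC^n)$.

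The computations are routine; the only points requiring care are the complex-vector form of the ellipticity inequality and the fact that membership in the form domain already encodes that $Du$ (respectively $\nabla u$) is a genuine measurable, weighted-$L^2$ vector field rather than a mere distribution, so that the pointwise estimates may be integrated directly. Accordingly I expect no serious obstacle: the content of the lemma is precisely that local uniform ellipticity localises the weighted global bound, and that boundedness of $u$ is exactly what is needed to absorb the magnetic term $b_k u$ into $L^2_\loc$.
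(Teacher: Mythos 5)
Your proof is correct and follows essentially the same route as the paper's: local uniform ellipticity ($a\geq\mu_K$ on compacta) removes the weight $a^{\frac 1 2}$, and the hypothesis $u\in L^\infty(\Omega)$ together with $b\in L^2_{\loc}(\Omega;\IR^n)$ absorbs the magnetic term $ub$ into $L^2_{\loc}$. The only cosmetic difference is that the paper writes $\nabla u=a^{-\frac 1 2}\bigl(a^{\frac 1 2}\nabla u\bigr)$ and adds the magnetic term while still in the weighted space, whereas you unweight first via the quadratic-form inequality $\langle a\xi,\xi\rangle\geq\mu_K\abs{\xi}^2$ (correctly verified for complex $\xi$) and then pass from $Du$ to $\nabla u$.
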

\begin{proof}
\begin{itemize}
\item[(a)]By definition, $a^\frac 1 2\nabla u\in L^2(\Omega;\IC^n)$. For every $K\subset\subset \Omega$ there is a constant $\mu_K>0$ such that $a\geq \mu_K$ on $K$, so $a^{-\frac 1 2}\in L^\infty_{\loc}(\Omega;M_n(\IR)^+)$. It follows that $\nabla u=a^{-\frac 1 2} a^{\frac 1 2} \nabla u\in L^2_{\loc}(\Omega;\IC^n)$.
\item[(b)]Let $u\in D(\E^{(N)}_{a,b,V})\cap L^\infty(\Omega)$. Since $u\in L^\infty(\Omega)$, $b\in L^2_{\loc}(\Omega;\IR^n)$ and $a\in L^\infty_{\loc}(\Omega;M_n(\IR)^+)$, we have  $u a^{\frac 1 2} b\in L^2_{\loc}(\Omega;\IC^n)$.\\
By definition, $a^{\frac 1 2}Du\in L^2(\Omega;\IC^n)$.\\
Thus, we can conclude $a^{\frac 1 2}\nabla u=a^{\frac 1 2}Du+iua^{\frac 1 2}b\in L^2_{\loc}(\Omega;\IC^n)$. The same argument as in (a) gives $\nabla u\in L^2_{\loc}(\Omega;\IC^n)$.\qedhere
\end{itemize}
\end{proof}

The following Leibniz rule for weak derivatives is taken from \cite{Man01}, Hilfssätze 14.1.1. and 14.1.2. As the proof s are quite technical, we will not reproduce them here.
\begin{lemma}\label{prod_weak_derivatives}
\begin{itemize}
\item[(a)]Product rule for weak derivatives: If $u,v\in W^{1,1}_{\loc}(\Omega)$ such that $u\nabla v$, $v\nabla u\in L^1_{\loc}(\Omega;\IC^n)$, then $uv\in W^{1,1}_{\loc}(\Omega)$ and
\begin{align*}
\nabla (uv)=u\nabla v+v\nabla u.
\end{align*}
\item[(b)]If $u\in W^{1,1}_{\loc}(\Omega)$, then $\abs{u}\in W^{1,1}_{\loc}(\Omega)$ and
\begin{align*}
\abs{u}\nabla\abs{u}=\Re(\bar u\nabla u).
\end{align*}
\end{itemize}
\end{lemma}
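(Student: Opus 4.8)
The plan is to prove both identities by the standard device of \emph{mollification}: one verifies each rule for smooth approximants, where it is classical, and then passes to the limit, the integrability hypotheses furnishing the dominating functions needed to justify the limits. Throughout I write $u_\epsilon=u*\rho_\epsilon$ for a standard mollifier $\rho_\epsilon$; on any $\Omega'\subset\subset\Omega$ and for small $\epsilon$ one has $u_\epsilon\to u$ and $\nabla u_\epsilon\to\nabla u$ in $L^1(\Omega')$ and, along a subsequence, almost everywhere. Since the functions are complex-valued, I regard them through their real and imaginary parts wherever a real chain rule is convenient.

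For part (a) I would first recall that the product rule holds classically for the smooth functions $u_\epsilon,v_\epsilon$. The difficulty is that a product of two merely $L^1_{\loc}$-convergent sequences need not converge, so I would \emph{reduce to the bounded case} by truncation. Let $\tau_M\colon\IC\to\IC$ be the $1$-Lipschitz radial truncation at level $M$ and set $u_M=\tau_M\circ u$, $v_M=\tau_M\circ v$; these lie in $W^{1,1}_{\loc}(\Omega)\cap L^\infty(\Omega)$, satisfy $\abs{\nabla u_M}\le\abs{\nabla u}$, and converge to $u,v$ together with their gradients in $L^1_{\loc}$ as $M\to\infty$. For the bounded functions $u_M,v_M$ the product rule follows by mollifying both factors: the smooth approximants are uniformly bounded and converge almost everywhere, so $u_{M,\epsilon}\nabla v_{M,\epsilon}$ and $v_{M,\epsilon}\nabla u_{M,\epsilon}$ converge in $L^1_{\loc}$ and one obtains $\nabla(u_Mv_M)=u_M\nabla v_M+v_M\nabla u_M$. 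Finally I would let $M\to\infty$: on the right-hand side $\abs{u_M\nabla v_M}\le\abs{u\,\nabla v}$ and $\abs{v_M\nabla u_M}\le\abs{v\,\nabla u}$, so the hypotheses $u\nabla v,\,v\nabla u\in L^1_{\loc}$ provide exactly the dominating functions for dominated convergence, while $u_Mv_M\to uv$ in $L^1_{\loc}$ (here one uses $uv\in L^1_{\loc}$, which holds in all our applications). Passing to the limit in the distributional identity $\int u_Mv_M\,\partial_j\phi=-\int(u_M\partial_jv_M+v_M\partial_ju_M)\phi$ then yields the claim.

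For part (b) I cannot simply invoke (a) applied to $u\bar u$, because that would require $u\nabla u\in L^1_{\loc}$, which is \emph{not} assumed. Instead I would regularize the modulus by $g_\epsilon:=\sqrt{\abs{u}^2+\epsilon^2}$. The point is a chain rule: if $F\in C^1(\IR^2;\IR)$ has bounded gradient and $w=(\Re u,\Im u)\in W^{1,1}_{\loc}$, then $F\circ w\in W^{1,1}_{\loc}$ with $\nabla(F\circ w)=\sum_k(\partial_kF)(w)\,\nabla w_k$; this is proved by mollifying $w$ alone, the bound on $\nabla F$ giving domination by a constant multiple of $\abs{\nabla u}\in L^1_{\loc}$ and the a.e.\ convergence $w_\epsilon\to w$ handling the coefficients $(\partial_kF)(w_\epsilon)$. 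Applying this to $F(s,t)=\sqrt{s^2+t^2+\epsilon^2}$, which satisfies $\abs{\nabla F}\le1$, gives $g_\epsilon\in W^{1,1}_{\loc}$ with
\begin{align*}
\nabla g_\epsilon=\frac{\Re(\bar u\,\nabla u)}{g_\epsilon},\qquad \abs{\nabla g_\epsilon}\le\abs{\nabla u}.
\end{align*}
Letting $\epsilon\downarrow0$, I have $g_\epsilon\to\abs{u}$ in $L^1_{\loc}$ and $\nabla g_\epsilon\to\Re(\bar u\,\nabla u)/\abs{u}$ pointwise on $\{u\neq0\}$ and $\to0$ on $\{u=0\}$, all dominated by $\abs{\nabla u}$; dominated convergence identifies the weak gradient, so $\abs{u}\in W^{1,1}_{\loc}$. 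Multiplying the limiting gradient by $\abs{u}$ and using that $\nabla\abs{u}=0$ almost everywhere on $\{u=0\}$ gives the stated identity $\abs{u}\nabla\abs{u}=\Re(\bar u\,\nabla u)$ everywhere.

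The main obstacle throughout is the passage to the limit in products of weakly convergent factors; in both parts this is overcome by arranging that every term be dominated by a fixed $L^1_{\loc}$ function — in (a) by the truncation, which makes the hypotheses $u\nabla v,\,v\nabla u\in L^1_{\loc}$ the operative dominating bounds, and in (b) by the regularization, whose gradient is controlled by $\abs{\nabla u}$ uniformly in $\epsilon$. I would regard establishing the elementary chain rule for $C^1$ functions with bounded gradient (part (b)) and the careful bookkeeping of the two successive limits $\epsilon\to0$ and $M\to\infty$ (part (a)) as the only genuinely technical points.
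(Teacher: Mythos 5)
First, a point of comparison: the paper offers no proof of this lemma at all --- it quotes the statement from \cite{Man01} (Hilfss\"atze 14.1.1 and 14.1.2) and explicitly declines to reproduce the proofs, so your argument has to stand entirely on its own. Part (b) does stand: the chain rule for $C^1$ functions with bounded gradient, proved by mollifying $w=(\Re u,\Im u)$ alone, applied to $F(s,t)=\sqrt{s^2+t^2+\epsilon^2}$ and followed by dominated convergence as $\epsilon\downarrow 0$, is complete and is the standard textbook route (cf.\ \cite{LL97}); every limit passage there is correctly dominated by $\abs{\nabla u}\in L^1_{\loc}$.

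Part (a), however, contains a genuine gap, which you flag yourself: to pass to the limit on the left-hand side of $\int u_Mv_M\,\partial_j\phi=-\int(u_M\partial_jv_M+v_M\partial_ju_M)\phi$ you need $u_Mv_M\to uv$ in $L^1_{\loc}$, and for this you \emph{assume} $uv\in L^1_{\loc}$, remarking that it ``holds in all our applications''. But $uv\in L^1_{\loc}$ is part of the conclusion (it is contained in $uv\in W^{1,1}_{\loc}$) and is not among the hypotheses, so as written you have only proved the lemma under an extra hypothesis. The integrability does in fact follow from the stated assumptions, but it needs its own argument; for instance: with $w_M=\min(\abs{u},M)\,\min(\abs{v},M)$, your bounded product rule together with part (b) gives $w_M\in W^{1,1}_{\loc}$ with $\abs{\nabla w_M}\le\abs{u\nabla v}+\abs{v\nabla u}=:g\in L^1_{\loc}$ uniformly in $M$; on a ball $B\subset\subset\Omega$ choose $\lambda$ so large that $E=\{\abs{u}\le\lambda\}\cap\{\abs{v}\le\lambda\}\cap B$ has $\abs{E}\ge\abs{B}/2$ (possible since $u,v$ are finite a.e.), note that $w_M\le\lambda^2$ on $E$ for every $M$, and combine the Poincar\'e inequality $\norm{w_M-(w_M)_B}_{L^1(B)}\le C_B\norm{g}_{L^1(B)}$ with this anchor set to bound the means $(w_M)_B$, hence $\norm{w_M}_{L^1(B)}$, uniformly in $M$; since $w_M\uparrow\abs{uv}$ pointwise, monotone convergence gives $\abs{uv}\in L^1(B)$. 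Without some such step the proof of (a) is incomplete. A second, smaller defect: the radial truncation $\tau_M$ is Lipschitz but not $C^1$, so the claims $u_M\in W^{1,1}_{\loc}$ and $\abs{\nabla u_M}\le\abs{\nabla u}$ invoke a chain rule for non-smooth vector-valued maps that you never establish; this is easy to repair (truncate $\Re u$ and $\Im u$ separately by the scalar piecewise-linear truncation, whose chain rule is classical, or smooth the truncation and apply your own $C^1$ chain rule), but it should be said.
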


The next proposition shows that the form with magnetic potential is dominated by the form without magnetic potential. In principle, this fact has been long known and probably goes back to Simon (cf. \cite{Sim79}), just the regularity assumptions on $a$, $b$ and $V$ have been considerably weakened over the last decades. In the form presented here, the statement is taken from \cite{HS04}, Theorem 3.3. Our proof is a little different in that we prove the domination property for the forms instead of the semigroups.
\begin{proposition}\label{domination_domains}
If $\tilde V\colon\Omega\to\IR$ is measurable, $\tilde V\leq V$ and $\tilde V^-$ is form small with respect to $\E^{(N)}_{a,0,0}$, then $\E_{a,b,V}^{(N)}$  is dominated by $\E_{a,0,\tilde V}^{(N)}$.
\end{proposition}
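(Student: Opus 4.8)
The plan is to verify the characterization of domination on $L^2$-spaces, Corollary \ref{cor_domination}, for the rank-one Hermitian bundle $\Omega\times\IC$, where the symmetrization is the pointwise modulus $u\mapsto\abs u$. Concretely, I would establish (a) that $D(\E^{(N)}_{a,b,V})$ is a generalized ideal of $D(\E^{(N)}_{a,0,\tilde V})$, and (b) that
\begin{align*}
\Re\E^{(N)}_{a,b,V}(u,\tilde u)\geq\E^{(N)}_{a,0,\tilde V}(\abs u,\abs{\tilde u})
\end{align*}
holds for all $u,\tilde u\in D(\E^{(N)}_{a,b,V})$ with $u\overline{\tilde u}=\abs u\abs{\tilde u}$ almost everywhere, i.e.\ sharing the same pointwise phase.

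The computational core is a magnetic diamagnetic decomposition. First I would note that every $f\in D(\E^{(N)}_{a,b,V})$ belongs to $W^{1,1}_{\loc}(\Omega)$: local ellipticity gives $Df\in L^2_{\loc}(\Omega;\IC^n)$, and $bf\in L^1_{\loc}$ since $b\in L^2_{\loc}$ and $f\in L^2$, so $\nabla f=Df+ibf\in L^1_{\loc}$; likewise $D(\E^{(N)}_{a,0,\tilde V})\subset W^{1,2}_{\loc}$ by Lemma \ref{regularity_magnetic}(a). Writing $w=\sgn f$ on $\{f\neq 0\}$ and using $f=\abs f\,w$ together with Lemma \ref{prod_weak_derivatives}(b), the relation $\abs w=1$ forces $\bar w\nabla w=i\theta$ for a real field $\theta$ (determined by the phase of $f$), and a short computation gives the pointwise identity
\begin{align*}
\bar w\,Df=\nabla\abs f+i\abs f\,(\theta-b).
\end{align*}
Applying the real symmetric matrix $a^{\frac 1 2}$ and taking moduli yields the diamagnetic inequality $\abs{a^{\frac 1 2}\nabla\abs f}\leq\abs{a^{\frac 1 2}Df}$ almost everywhere (on $\{f=0\}$ both sides vanish a.e.).

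For the generalized ideal property, (I1) amounts to $\abs f\in D(\E^{(N)}_{a,0,\tilde V})$, which follows by integrating the diamagnetic inequality for the gradient term and from $\tilde V^+\leq V^+$ for the potential term. For (I2), given $f_1\in D(\E^{(N)}_{a,b,V})$ and $g\in D(\E^{(N)}_{a,0,\tilde V})^+$ with $g\leq\abs{f_1}$, the pairing forces $f_2=g\sgn f_1$; since $\abs{f_2}=g\leq\abs{f_1}$ the $L^2$- and potential parts are immediate, so it remains to control the gradient. The decomposition above gives, formally,
\begin{align*}
a^{\frac 1 2}Df_2=w\Bigl(a^{\frac 1 2}\nabla g+i\tfrac{g}{\abs{f_1}}\,\Im\bigl(\bar w\,a^{\frac 1 2}Df_1\bigr)\Bigr),
\end{align*}
a real vector plus an orthogonal imaginary one; as $g/\abs{f_1}\leq 1$ and $\abs{\Im(\bar w\,a^{\frac 1 2}Df_1)}\leq\abs{a^{\frac 1 2}Df_1}$ this yields the pointwise bound
\begin{align*}
\abs{a^{\frac 1 2}Df_2}^2\leq\abs{a^{\frac 1 2}\nabla g}^2+\abs{a^{\frac 1 2}Df_1}^2,
\end{align*}
whose right-hand side is integrable, so $a^{\frac 1 2}Df_2\in L^2(\Omega;\IC^n)$ and $f_2\in D(\E^{(N)}_{a,b,V})$.

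Finally, for (b) I would use that paired $u,\tilde u$ share the same phase $w$, hence the same field $\theta$ in the decomposition. Multiplying by the unit scalar $\bar w$ preserves the Hermitian inner product, so
\begin{align*}
\Re\sum_{j,k}a_{jk}(D_ju)\overline{(D_k\tilde u)}=\sum_{j,k}a_{jk}(\partial_j\abs u)(\partial_k\abs{\tilde u})+\abs u\abs{\tilde u}\,\abs{a^{\frac 1 2}(\theta-b)}^2,
\end{align*}
where the last term is nonnegative precisely because the two imaginary parts are parallel. Integrating and adding $\int_\Omega Vu\overline{\tilde u}=\int_\Omega V\abs u\abs{\tilde u}\geq\int_\Omega\tilde V\abs u\abs{\tilde u}$ gives (b). I expect the main obstacle to be the rigorous treatment of the weak derivatives of $g\sgn f_1$ (and of the identity for $\bar w\,Df$) across the zero set $\{f_1=0\}$, where $\sgn f_1$ is singular. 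I would resolve this by regularizing the sign, e.g.\ replacing $\sgn f_1$ by $f_1(\abs{f_1}^2+\epsilon^2)^{-1/2}$, applying the product and chain rules (Lemma \ref{prod_weak_derivatives}(a)) to the smooth approximants, and passing to the limit $\epsilon\downto 0$, using $\abs{f_2^\epsilon}\leq g$ for $L^2$-convergence and $\nabla\abs{f_1}=0$ a.e.\ on $\{f_1=0\}$ to control the error terms.
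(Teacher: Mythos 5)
Your proposal is correct, and it reaches the conclusion by a genuinely different computational route than the paper, although both share the same skeleton: verify the two conditions of Corollary \ref{cor_domination}, namely the generalized ideal property and the form inequality for paired elements. The paper never introduces a phase field. Instead it differentiates the pairing identity $\abs{f_1}f_2=f_1\abs{f_2}$ via the product rule (Lemma \ref{prod_weak_derivatives}) to obtain the vector identity $\abs{f_1}Df_2=f_1\nabla\abs{f_2}+\abs{f_2}Df_1-f_2\nabla\abs{f_1}$, bounds $\abs{a^{1/2}Df_2}$ by the triangle inequality, and derives the form inequality by pairing this identity against $Df_1$ together with the identity $\abs{u}\nabla\abs{u}=\Re(\bar u\,Du)$ and the elementary estimate $\sum_{j,k}a_{jk}(\Re\xi_j)(\Re\xi_k)\le\Re\sum_{j,k}a_{jk}\xi_j\bar\xi_k$. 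Your polar decomposition $\bar w\,Df=\nabla\abs f+i\abs f(\theta-b)$ buys more: orthogonality of the real and imaginary components upgrades the paper's triangle-inequality bound to the sharper Pythagorean bound $\abs{a^{1/2}Df_2}^2\le\abs{a^{1/2}\nabla g}^2+\abs{a^{1/2}Df_1}^2$, and it exhibits the form inequality as an identity with the explicit nonnegative remainder $\abs u\abs{\tilde u}\,\abs{a^{1/2}(\theta-b)}^2$, which makes the diamagnetic mechanism transparent. The price is exactly the obstacle you flag: $w=\sgn f_1$ is not weakly differentiable, so $\theta$ cannot literally be defined by $\bar w\nabla w=i\theta$; the clean fix is to set $\theta:=\Im(\overline{\sgn f_1}\,\nabla f_1)/\abs{f_1}$ on $\{f_1\neq 0\}$ and $0$ elsewhere, so that your decomposition holds almost everywhere by Lemma \ref{prod_weak_derivatives}(b) without ever differentiating $w$, and the regularization $f_1(\abs{f_1}^2+\epsilon^2)^{-1/2}$ is then needed only to show that $f_2=g\sgn f_1$ lies in $W^{1,1}_{\loc}(\Omega)$ with the product-rule formula. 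Your limiting argument does close: writing $w_\epsilon=f_1(\abs{f_1}^2+\epsilon^2)^{-1/2}$ one has $g\abs{\nabla w_\epsilon}\le 2\abs{\nabla f_1}\in L^1_{\loc}(\Omega)$ because $g\le\abs{f_1}$, and $\nabla f_1=0$ a.e.\ on $\{f_1=0\}$, so dominated convergence identifies the limit of $\nabla(gw_\epsilon)$. Two further small points: the common-phase identity in your step (b) only makes sense on $\{u\neq 0\}\cap\{\tilde u\neq 0\}$, but on the complement both integrands vanish a.e.\ (weak gradients vanish a.e.\ on level sets), so the inequality survives; and your regularization in fact supplies a step the paper leaves tacit, since the paper also differentiates an identity involving $f_2=v\sgn f_1$ without first verifying that $f_2$ is weakly differentiable.
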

\begin{proof}
In the first step we show that $D(\E^{(N)}_{a,b,V})$ is a generalized ideal of $D(\E^{(N)}_{a,0,\tilde V})$.\\
Let $u\in D(\E^{(N)}_{a,b,V})\subset W^{1,1}_{\loc}(\Omega)$. By Lemma \ref{prod_weak_derivatives} we have $\abs{u}\in W^{1,1}_{\loc}(\Omega)$ and 
\begin{align*}
\abs{u}\nabla\abs{u}&=\Re(\bar u\nabla u)=\Re(\bar u\nabla u-i\abs{u}^2b)=\Re(\bar u Du).\tag{$\dagger$}\label{real_part_magnetic}
\end{align*}
Let $\xi\in \IC^n$ and $x\in \Omega$. Then
\begin{align*}
\sum_{j,k} a_{j,k}(x)\Re \xi_j \Re \xi_k&\leq \sum_{j,k}a_{jk}(x)(\Re\xi_j)(\Re\xi_k)+\sum_{j,k} a_{jk}(x)(\Im \xi_j)(\Im \xi_k)\\
&=\Re\sum_{j,k} a_{jk}(x)\xi_j\bar \xi_k.
\end{align*}
Applied to $\xi=\overline{u(x)} Du(x)$, we obtain
\begin{align*}
\sum_{j,k} a_{j,k}(\partial_j \abs{u})(\partial_k \abs{u})\leq\Re\sum_{j,k}a_{j,k}(D_j u)(\overline{D_k u}).\tag{$\dagger\dagger$}\label{ineq_forms}
\end{align*}
This implies $\abs{u}\in D(\E^{(N}_{a,0,V})$.\\
Next let $f_1,f_2\in L^2(\Omega)$ such that $\nabla\abs{f_1},\nabla\abs{f_2}\in L^2_{\loc}(\Omega;\IC^n)$. Assume that $\bar f_1 f_2=\abs{f_1}\abs{f_2}$ and notice that this condition is equivalent to $\abs{f_1} f_2=f_1\abs{f_2}$. Differentiating this identity we obtain
\begin{align*}
\abs{f_1}\nabla f_2+f_2\nabla\abs{f_1}=f_1\nabla\abs{f_2}+\abs{f_2}\nabla f_1.
\end{align*}
If $f_1\in D(\E^{(N)}_{a,b,V}),\,v\in D(\E^{(N)}_{a,0,V})^+$ such that $v\leq \abs{f_1}$ and $f_2=v\sgn f_1$, then $\abs{f_1},\abs{f_2}\in D(\E^{(N)}_{a,0,V})$ and by Lemma \ref{regularity_magnetic} the condition $\nabla\abs{f_1},\nabla\abs{f_2}\in L^2_{\loc}(\Omega;\IC^n)$ is met. Thus,
\begin{align*}
\abs{f_1} D f_2=f_1\nabla\abs{f_2}+\abs{f_2}D f_1-f_2\nabla\abs{f_1}\tag{$\dagger\dagger\dagger$}\label{eq_forms}
\end{align*}
and consequently
\begin{align*}
\abs{a^{\frac 1 2}D f_2}\leq\abs{a^{\frac 1 2}\nabla v}+\abs{a^{\frac 1 2}Df_1}+\abs{a^{\frac 1 2}\nabla v}.
\end{align*}
Since all the summands on the right-hand side are in $L^2$ by assumption, so is $a^{\frac 1 2}\ Df_2$. Furthermore,
\begin{align*}
\int_\Omega V\abs{f_2}^2\,dx\leq \int_\Omega V\abs{f_1}^2\,dx<\infty,
\end{align*}
hence $f_2=v\sgn u\in D(\E^{(N)}_{a,b,V})$.\\
Now let $f_1,f_2\in D(\E^{(N)}_{a,b,V})$ such that $\abs{f_1}f_2=f_1\abs{f_2}$. By (\ref{eq_forms}) we have
\begin{align*}
\abs{f_1}^2\Re\sum_{j,k} a_{jk} (D_j f_2)(\overline{D_k f_1})
=\Re\sum_{j,k}a_{jk} \abs{f_1} (\overline{D_k f_1})(f_1\partial_j \abs{f_2}+\abs{f_2} D_j f_1 -f_2 \partial_j \abs{f_1}).
\end{align*}
Two applications of (\ref{real_part_magnetic}) yield
\begin{align*}
&\Re\sum_{j,k}a_{jk} \abs{f_1} (\overline{D_k f_1})(f_1\partial_j \abs{f_2}+\abs{f_2} D_j f_1 -f_2 \partial_j \abs{f_1})\\
={}&\sum_{j,k}a_{jk}\abs{f_1}^2(\partial_j \abs{f_2})(\partial_k \abs{f_1})
+\Re\abs{f_1}\abs{f_2}\sum_{j,k}a_{j,k}(D_j f_1)(\overline{D_k f_1})\\
&-\Re \sum_{j,k} a_{jk}\abs{f_2} f_1(\overline{D_k f_1})\partial_j \abs{f_1}\\
={}&\abs{f_1}^2\sum_{j,k} a_{jk}(\partial_j \abs{f_2})(\partial_k\abs{f_1})+\abs{f_1}\abs{f_2}\Re\sum_{j,k}a_{jk}(D_j f_1)(\overline{D_k f_1})\\
&-\abs{f_1}\abs{f_2}\sum_{j,k} a_{jk}(\partial_j\abs{f_1})(\partial_k \abs{f_1}).
\end{align*}
Using (\ref{ineq_forms}), we conclude
\begin{align*}
&\abs{f_1}^2\Re\sum_{j,k} a_{jk} (D_j f_2)(\overline{D_k f_1})\\
={}&\abs{f_1}^2\sum_{j,k} a_{jk}(\partial_j \abs{f_2})(\partial_k\abs{f_1})+\abs{f_1}\abs{f_2}\Re\sum_{j,k}a_{jk}(D_j f_1)(\overline{D_k f_1})\\
&-\abs{f_1}\abs{f_2}\sum_{j,k} a_{jk}(\partial_j\abs{f_1})(\partial_k \abs{f_1})\\
\geq{}& \abs{f_1}^2\sum_{j,k}a_{jk}(\partial_j \abs{f_2})(\partial_k \abs{f_1}).
\end{align*}
Therefore,
\begin{align*}
\Re\E^{(N)}_{a,b,V}(f_1,f_2)&=\Re\E^{(N)}_{a,b,0}(f_1,f_2)+\int_\Omega Vf_1 \overline{f_2}\,dx\\
&\geq \E^{(N)}_{a,0,0}(S(f_1),S(f_2))+\int_\Omega \tilde V \abs{f_1}\abs{f_2}\,dx\\
&=\E^{(N)}_{a,0,\tilde V}(\abs{f_1},\abs{f_2}).\qedhere
\end{align*}
\end{proof}

\begin{remark}
Notice that while domination is characterized by an inequality for integrals, our proof gives a \emph{pointwise} inequality generalizing the diamagnetic inequality (cf. \cite{LL97}, Theorem 7.21)
\begin{align*}
\abs{Du}\geq \abs{\nabla\abs{u}}.
\end{align*}
In particular, the regularity requirements are only needed to assure that Lemma \ref{prod_weak_derivatives} is applicable, the rest of the proof is purely algebraic in means.
\end{remark}

Having proven the domination, we will now show that $\E^{(N)}_{a,b,V}$ is smoothly inner regular so that we can apply Corollary \ref{uniqueness_smooth}.

\begin{proposition}\label{smooth_reg}
The form $\E^{(N)}_{a,b,V}$ is smoothly inner regular.
\end{proposition}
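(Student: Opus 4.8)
The plan is to unfold the definition of smooth inner regularity and to produce the required approximants by mollification. Since the underlying bundle is trivial, $\Gamma_c^\infty(\Omega)=C_c^\infty(\Omega)$, and (exactly as noted above for test functions) $C_c^\infty(\Omega)\subset D(\E^{(N)}_{a,b,V})$; thus it suffices to show that every $u\in D(\E^{(N)}_{a,b,V})\cap L^\infty_c(\Omega)$ can be approximated in the form norm $\norm\cdot_{a,b,V}$ by functions in $C_c^\infty(\Omega)$. Fix such a $u$ and let $K=\supp u\subset\subset\Omega$. Choosing a standard mollifier $(\rho_\epsilon)$ and setting $u_\epsilon=u*\rho_\epsilon$, for all $\epsilon$ smaller than the distance from $K$ to the boundary of $\Omega$ the functions $u_\epsilon$ lie in $C_c^\infty(\Omega)$ and are all supported in one fixed compact set $K'\subset\subset\Omega$; moreover $\norm{u_\epsilon}_{L^\infty}\leq\norm{u}_{L^\infty}$, so that $\norm{u_\epsilon-u}_{L^\infty}\leq 2\norm{u}_{L^\infty}$ uniformly in $\epsilon$.

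It then remains to prove $\norm{u_\epsilon-u}_{a,b,V}\to 0$, which I would do by controlling the $L^2$-part, the magnetic part and the potential part separately. The convergence $\norm{u_\epsilon-u}_{L^2}\to 0$ is classical. For the magnetic part I would write $D(u_\epsilon-u)=\bigl((\nabla u)*\rho_\epsilon-\nabla u\bigr)-ib(u_\epsilon-u)$, using that mollification commutes with the weak gradient and that $\nabla u\in L^2_\loc(\Omega;\IC^n)$ by Lemma \ref{regularity_magnetic} (b); the first summand tends to $0$ in $L^2_\loc$. Since all functions involved are supported in the fixed compact set $K'$, on which $a$ is essentially bounded, the quadratic form $\int_\Omega\sum_{jk}a_{jk}(D_j(u_\epsilon-u))\overline{(D_k(u_\epsilon-u))}\,dx$ is dominated by a constant times $\int_{K'}\abs{D(u_\epsilon-u)}^2\,dx$, so it suffices to control $\int_{K'}\abs{D(u_\epsilon-u)}^2\,dx$, i.e. (after the splitting above) the two scalar integrals $\int\abs{b}^2\abs{u_\epsilon-u}^2\,dx$ and $\int(\nabla u)*\rho_\epsilon-\nabla u|^2\,dx$, the latter already being handled.

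The heart of the matter, and the step I expect to be the main obstacle, is the convergence of the terms $\int_\Omega\abs{b}^2\abs{u_\epsilon-u}^2\,dx$ and $\int_\Omega V^+\abs{u_\epsilon-u}^2\,dx$, since the coefficients $b\in L^2_\loc$ and $V^+\in L^1_\loc$ are only locally integrable while $u_\epsilon-u$ converges merely in $L^2$ and not uniformly, so one cannot simply pass to the limit in the product. I would resolve this by a splitting argument exploiting the uniform $L^\infty$-bound: writing $w_\epsilon=u_\epsilon-u$, for any $R>0$
\begin{align*}
\int_{K'}V^+\abs{w_\epsilon}^2\,dx\leq\norm{w_\epsilon}_{L^\infty}^2\int_{K'\cap\{V^+>R\}}V^+\,dx+R\,\norm{w_\epsilon}_{L^2}^2,
\end{align*}
where the first summand is made arbitrarily small by choosing $R$ large (since $V^+\in L^1(K')$, its tail integral vanishes) and the second tends to $0$ as $\epsilon\to 0$ for fixed $R$; the identical estimate with $\abs{b}^2$ in place of $V^+$ disposes of the magnetic summand. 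Finally, the contribution of $V^-$ is controlled using that $V^-$ is form small with respect to $\E^{(N)}_{a,0,0}$: from $\nabla(u_\epsilon-u)\to 0$ in $L^2_\loc$ one gets $\E^{(N)}_{a,0,0}(u_\epsilon-u)\to 0$, whence form-smallness yields $\int_\Omega V^-\abs{u_\epsilon-u}^2\,dx\leq\alpha\,\E^{(N)}_{a,0,0}(u_\epsilon-u)+\beta\,\norm{u_\epsilon-u}_{L^2}^2\to 0$. Assembling the $L^2$-, magnetic-, $V^+$- and $V^-$-estimates gives $\norm{u_\epsilon-u}_{a,b,V}\to 0$, which is the claim.
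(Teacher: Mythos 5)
Your proof is correct and follows essentially the same route as the paper's: mollify the compactly supported bounded function, use Lemma \ref{regularity_magnetic} to obtain $H^1$-regularity (so mollification converges in $H^1$), exploit the uniform $L^\infty$-bound on $u_\epsilon$ together with the local integrability of $a$, $b$, $V^+$ on a fixed compact set containing all supports, and dispose of $V^-$ via form smallness. The only deviations are technical: where you run a truncation argument over $\{V^+>R\}$ (resp. the analogous set for $\abs{b}^2$), the paper simply applies dominated convergence, using that mollification also converges pointwise almost everywhere; and you apply form smallness of $V^-$ relative to $\E^{(N)}_{a,0,0}$ exactly as defined, while the paper invokes it relative to $\E^{(N)}_{a,b,V^+}$ --- both are valid.
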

\begin{proof}
Let $\eta\in C_c^\infty(\IR^n)$ such that $\supp \eta\subset \overline{B_1(0)}$, $\eta\geq 0$ and $\norm\eta_{L^1}=1$, and define $\eta_\epsilon(x)=\epsilon^{-n}\eta(\epsilon^{-1}x)$.\\
Let $u\in D(\E^{(N)}_{a,b,V})\cap L^\infty_c(\Omega)$ and $K\subset\subset \Omega$ be open such that $u|_{K^c}=0$.  Let 
\begin{align*}
K_\epsilon=\{x\in\Omega\mid d(x, K)< \epsilon\}
\end{align*}
for $\epsilon<d( K,\Omega^c)$ and $\delta=\frac 1 2 d(K,\Omega^c)$.\\
By Lemma \ref{regularity_magnetic}, $a^{\frac 1 2}\nabla u\in L^2_{\loc}(\Omega;\IC^n)$. Since $u|_{K^c}=0$, we have that  $\norm{a ^{\frac  12}\nabla u}_{L^2(\Omega)}= \norm{a^{\frac  12} \nabla u}_{L^2(K_\delta)}<\infty$. Moreover, $a\geq \mu_{K_\delta}$ on $K_\delta$ implies $a^{-\frac 1 2}\leq \mu_{K_\delta}^{-\frac 1 2}$ on $K_\delta$, hence $\nabla u=a^{-\frac 12}a^{\frac 1 2}\nabla u\in L^2(K_\delta)$. Since $\nabla u$ vanishes outside $K_\delta$, we have $u\in H^1(\Omega)$.
Extend $u$ by $0$ to $\IR^n$ and define $u_\epsilon=(u\ast \eta_\epsilon)|_{\Omega}$ for $\epsilon<\delta$. Then $u_\epsilon\in C_c^\infty(\Omega)$, $\supp u_\epsilon\subset K_\epsilon\subset K_\delta$ and $u_\epsilon\to u,\epsilon\downarrow 0,$ in $H^1(\Omega)$ and pointwise by the mollifier theorem (cf. \cite{LL97}). Moreover, $\abs{u_\epsilon(x)}\leq\norm u_\infty$. for all $x\in\Omega$.\\
Since $a\in L^\infty(K_\delta;M_n(\IR)^+)$, we have
\begin{align*}
\norm{a^{\frac 1 2}(\nabla u-\nabla u_{\epsilon})}_{L^2(K_\delta)}\leq \norm{a^{\frac 1 2}}_{L^\infty(K_\delta)}\norm{u-u_\epsilon}_{H_1(K_\delta)}\to 0,\,\epsilon\downto 0.
\end{align*}
Since $a\in L^\infty(K_\delta;M_n(\IR)^+),b\in L^2(K_\delta;\IR^n)$ and $V^+\in L^1(K_\delta)$ and $\abs{u_\epsilon(x)}\leq\norm{u}_\infty$ for almost all $x\in\Omega$, an application of the dominated convergence theorem yields
\begin{align*}
\norm{(u-u_\epsilon)a^{\frac 1 2}b}_{L^2}^2&=\int_{K_\delta}\abs{(u_\epsilon-u)a^{\frac 1  2}b}^2\,dx\to 0,\,\epsilon\downto 0,\\
\norm{(V^+)^{\frac 1 2}\abs{u-u_\epsilon}}_{L^2}^2&=\int_{K_\delta}V^+\abs{u-u_\epsilon}^2\,dx\to 0,\,\epsilon\downto 0.
\end{align*}
Therefore, we have
\begin{align*}
\E^{(N)}_{a,b,V^+}(u-u_\epsilon)&=\norm{a^{\frac 1 2}D(u-u_\epsilon)}^2_{L^2}+\norm{(V^+)^{\frac 1 2}(u-u_\epsilon)}^2_{L^2}\\
&\leq 2\norm{a^{\frac 1 2}\nabla (u-u_\epsilon)}_{L^2}^2+\norm{(u-u_\epsilon)a^{\frac 1 2}b}_{L^2}^2+\norm{(V^+)^{\frac 1 2}\abs{u-u_\epsilon}^2}_{L^2}^2\\
&\to 0,\,\epsilon\downarrow 0.
\end{align*}
Finally, since $V^-$ is form small with respect to $\E^{(N)}_{a,b,V^+}$, we also have $\E^{(N)}_{a,b,V}(u-u_\epsilon)\to 0,\,\epsilon\downto 0$.\\
Thus, $D(\E^{(N)}_{a,b,V})\cap L^\infty_c(\Omega)\subset \overline{C_c^\infty(\Omega)}^{\norm\cdot_{a,b,V}}=D(\E_{a,b,V}^{(D)})$.
\end{proof}

The main theorem of this section is now an immediate consequence of the abstract results in Chapter \ref{chap_uniqueness_criterion} and the preceding propositions.

\begin{theorem}\label{uniqueness_magnetic_forms}
Let $\tilde V\colon \Omega\lra\IR^n$ be measurable such that $\tilde V\leq V$ and $\tilde V^-$ is relatively form bounded with respect to $\E^{(N)}_{a,0,0}$. If $\E^{(D)}_{a,0,\tilde V}=\E^{(N)}_{a,0,\tilde V}$, then $\E^{(D)}_{a,b,V}=\E^{(N)}_{a,b,V}$.
\end{theorem}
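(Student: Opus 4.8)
The plan is to obtain the statement as a direct application of Corollary~\ref{uniqueness_smooth}, with the two preceding propositions of this section supplying its hypotheses. I would take $M=\Omega$, viewed as a Riemannian manifold, and $E=\Omega\times\IC$ the trivial Hermitian line bundle, so that $L^2(\Omega;E)$ is just the complex space $L^2(\Omega)$, the associated symmetrization is the modulus $S=\abs{\cdot}$, and $\Gamma_c^\infty(\Omega;E)=C_c^\infty(\Omega)$. In this framework I set $\b=\E^{(N)}_{a,0,\tilde V}$ (acting on the real space $L^2(\Omega)$) as the dominating form and $\a=\E^{(N)}_{a,b,V}$ (acting on the complex space $L^2(\Omega;E)$) as the dominated form.

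The first step is to translate the equalities of Dirichlet and Neumann forms into statements about form cores. By Definition~\ref{def_magnetic_form_Dirichlet}, $\E^{(D)}_{a,b,V}$ is the closure of the restriction of $\E^{(N)}_{a,b,V}$ to $C_c^\infty(\Omega)$; since the local integrability assumptions on $a$ and $b$ together with $V^+\in L^1_\loc(\Omega)$ and the form-smallness of $V^-$ guarantee $C_c^\infty(\Omega)\subset D(\E^{(N)}_{a,b,V})$, the equality $\E^{(D)}_{a,b,V}=\E^{(N)}_{a,b,V}$ is equivalent to $C_c^\infty(\Omega)$ being a form core for $\E^{(N)}_{a,b,V}$. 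The same reasoning applied to the triple $(a,0,\tilde V)$ shows that the hypothesis $\E^{(D)}_{a,0,\tilde V}=\E^{(N)}_{a,0,\tilde V}$ is exactly the assertion that $C_c^\infty(\Omega)=C_c^\infty(\Omega)\cap D(\b)$ is a form core for $\b$.

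It then remains to check the remaining hypotheses of Corollary~\ref{uniqueness_smooth}. The form $\b$ is closed and $\a$ is closed by the lemmas above, and $\a$ is smoothly inner regular by Proposition~\ref{smooth_reg}. The domination of $\a$ by $\b$ is precisely Proposition~\ref{domination_domains}, whose hypotheses $\tilde V\leq V$ and form-smallness of $\tilde V^-$ with respect to $\E^{(N)}_{a,0,0}$ are the assumptions of the theorem (reading ``relatively form bounded'' as ``form small''). The corollary then yields that $\Gamma_c^\infty(\Omega;E)\cap D(\a)=C_c^\infty(\Omega)$ is a form core for $\a=\E^{(N)}_{a,b,V}$, which by the translation of the first step is the desired equality $\E^{(D)}_{a,b,V}=\E^{(N)}_{a,b,V}$.

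Since the genuine analytic content has already been isolated in Propositions~\ref{domination_domains} and~\ref{smooth_reg}, I do not expect a serious obstacle here; the proof is essentially bookkeeping. The only points demanding care are the verification that $C_c^\infty(\Omega)$ lies in both form domains, so that the intersections with $D(\a)$ and $D(\b)$ appearing in Corollary~\ref{uniqueness_smooth} are vacuous, and the correct identification of the abstract data (trivial line bundle, modulus as symmetrization, real versus complex $L^2$-spaces) with the concrete magnetic setting.
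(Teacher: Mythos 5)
Your proposal is correct and takes essentially the same route as the paper, whose proof is exactly: domination from Proposition~\ref{domination_domains}, smooth inner regularity from Proposition~\ref{smooth_reg}, then Corollary~\ref{uniqueness_smooth}. The bookkeeping you make explicit (trivial line bundle, $C_c^\infty(\Omega)\subset D(\E^{(N)}_{a,b,V})$, and the translation of $\E^{(D)}=\E^{(N)}$ into a form-core statement) is left implicit in the paper but is precisely the intended reading.
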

\begin{proof}
By Proposition \ref{domination_domains}, $\E^{(N)}_{a,b,V}$ is dominated by $\E^{(N)}_{a,0,\tilde V}$. By Proposition \ref{smooth_reg}, the form $\E^{(N)}_{a,b,V}$ is smoothly inner regular. Thus, an application of Corollary \ref{uniqueness_smooth} yields $\E^{(N)}_{a,b,V}=\E^{(D)}_{a,b,V}$.
\end{proof}

\begin{lemma}\label{negative_part_negligible}
If $\E^{(D)}_{a,b,V^+}=\E^{(N)}_{a,b,V^+}$, then $\E^{(D)}_{a,b,V}=\E^{(N)}_{a,b,V}$.
\end{lemma}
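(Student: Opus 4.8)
The plan is to exploit that the passage from the potential $V^+$ to $V = V^+ - V^-$ perturbs the form only by a form-small term, so that it changes the form norm merely up to equivalence. First I would record the trivial but useful fact that, since $\E^{(D)}_{a,b,W}$ is by definition the closure of the restriction of $\E^{(N)}_{a,b,W}$ to $C_c^\infty(\Omega)$, one always has $D(\E^{(D)}_{a,b,W}) \subset D(\E^{(N)}_{a,b,W})$ with the two forms agreeing on the smaller domain. Hence the equality $\E^{(D)}_{a,b,W} = \E^{(N)}_{a,b,W}$ is equivalent to the equality of the two form domains, and it suffices to track domains throughout.

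The key step is to upgrade the form smallness of $V^-$ from the reference form $\E^{(N)}_{a,0,0}$ to the magnetic form $\E^{(N)}_{a,b,V^+}$. For this I would invoke the pointwise diamagnetic inequality (\ref{ineq_forms}) established inside the proof of Proposition \ref{domination_domains}: integrating it over $\Omega$ yields $\E^{(N)}_{a,0,0}(\abs u) \leq \E^{(N)}_{a,b,0}(u) \leq \E^{(N)}_{a,b,V^+}(u)$ for every $u \in D(\E^{(N)}_{a,b,V^+})$, the first inequality also showing $\abs u \in D(\E^{(N)}_{a,0,0})$. Since $\norm{\abs u}_{L^2} = \norm{u}_{L^2}$ and $V^-$ is form small with respect to $\E^{(N)}_{a,0,0}$ with constants $\alpha \in [0,1)$, $\beta \geq 0$, one obtains
\begin{align*}
\int_\Omega V^- \abs u^2 \, dx \leq \alpha\, \E^{(N)}_{a,0,0}(\abs u) + \beta \norm{u}_{L^2}^2 \leq \alpha\, \E^{(N)}_{a,b,V^+}(u) + \beta \norm{u}_{L^2}^2,
\end{align*}
so $V^-$ is form small with respect to $\E^{(N)}_{a,b,V^+}$ with the same constants.

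From this bound standard KLMN perturbation theory gives, on the common domain $D(\E^{(N)}_{a,b,V}) = D(\E^{(N)}_{a,b,V^+})$, the two-sided estimate $(1-\alpha)\E^{(N)}_{a,b,V^+}(u) - \beta \norm{u}_{L^2}^2 \leq \E^{(N)}_{a,b,V}(u) \leq \E^{(N)}_{a,b,V^+}(u)$, which shows that the form norms $\norm\cdot_{a,b,V}$ and $\norm\cdot_{a,b,V^+}$ are equivalent. Therefore $C_c^\infty(\Omega)$ has the same closure in both norms, i.e. $D(\E^{(D)}_{a,b,V}) = D(\E^{(D)}_{a,b,V^+})$. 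Combining this with the hypothesis $D(\E^{(D)}_{a,b,V^+}) = D(\E^{(N)}_{a,b,V^+})$ and the definitional identity $D(\E^{(N)}_{a,b,V^+}) = D(\E^{(N)}_{a,b,V})$ yields $D(\E^{(D)}_{a,b,V}) = D(\E^{(N)}_{a,b,V})$, which is the assertion. The only genuinely nontrivial point is the transfer of form smallness to the magnetic form, where the diamagnetic inequality is essential; everything else is routine perturbation theory.
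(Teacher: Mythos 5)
Your proof is correct, and its core mechanism is the same as the paper's: compare the form norms $\norm{\cdot}_{a,b,V}$ and $\norm{\cdot}_{a,b,V^+}$ on the common domain $D(\E^{(N)}_{a,b,V})=D(\E^{(N)}_{a,b,V^+})$ and transfer the density of $C_c^\infty(\Omega)$ from one norm to the other. The difference is economy. Since only the inclusion $D(\E^{(N)}_{a,b,V})\subset D(\E^{(D)}_{a,b,V})$ needs proof (the reverse one is automatic), the paper gets by with the trivial one-sided estimate: for $u\in D(\E^{(N)}_{a,b,V})=D(\E^{(D)}_{a,b,V^+})$ it picks $\phi_n\in C_c^\infty(\Omega)$ with $\norm{u-\phi_n}_{a,b,V^+}\to 0$ and notes that
\begin{align*}
\norm{u-\phi_n}_{a,b,V}^2\leq\norm{u-\phi_n}_{a,b,V^+}^2,
\end{align*}
because passing from $V^+$ to $V$ only subtracts the nonnegative term $\int_\Omega V^-\abs{u-\phi_n}^2\,dx$; the facts that $\E^{(N)}_{a,b,V}$ is lower semibounded and closed (which make $\norm{\cdot}_{a,b,V}$ a norm and let density imply $\E^{(D)}_{a,b,V}=\E^{(N)}_{a,b,V}$) are taken from the section's standing KLMN assumption. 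Your proof instead establishes the full two-sided norm equivalence, and this is where the diamagnetic inequality from the proof of Proposition \ref{domination_domains} and the transfer of form smallness of $V^-$ to $\E^{(N)}_{a,b,V^+}$ are genuinely needed — neither is required for the one-sided estimate. That extra half is not wasted: it is precisely the verification of the KLMN facts that the paper delegates to its setup and references, so your argument is more self-contained. But it is worth noticing that for the lemma itself the ``hard'' direction of the equivalence can be skipped entirely: the hypothesis only has to produce a $V^+$-approximating sequence, and dropping a nonnegative term can only help.
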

\begin{proof}
Let $u\in D(\E^{(N)}_{a,b,V})=D(\E^{(N)}_{a,b,V^+})=D(\E^{(D)}_{a,b,V^+})$. By assumption there is a sequence $\phi_n\in C_c^\infty(\Omega)$ such that $\norm{u-\phi_n}_{a,b,V^+}\to 0$. But
\begin{align*}
\norm{u-\phi_n}_{a,b,V}^2=\norm{u-\phi_n}_{a,b,V^+}^2-\int_\Omega V^-\abs{u-\phi_n}^2\,dx\leq\norm{u-\phi_n}_{a,b,V^+}^2.
\end{align*}
Hence $C_c^\infty(\Omega)$ is also dense in $D(\E^{(N)}_{a,b,V})$, that is, $\E^{(D)}_{a,b,V}=\E^{(N)}_{a,b,V}$.
\end{proof}

\begin{corollary}
If $\E^{(N)}_{a,0,0}=\E^{(D)}_{a,0,0}$, then $\E^{(N)}_{a,b,V}=\E^{(D)}_{a,b,V}$.
\end{corollary}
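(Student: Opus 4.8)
The plan is to reduce the general statement to the case of a nonnegative potential and then to invoke Theorem \ref{uniqueness_magnetic_forms} with the trivial auxiliary choice $\tilde V = 0$. First I would observe that, by Lemma \ref{negative_part_negligible}, it suffices to establish the coincidence $\E^{(D)}_{a,b,V^+}=\E^{(N)}_{a,b,V^+}$ for the positive part of the potential; once this is known, the negative part $V^-$ creates no further difficulty and the desired identity $\E^{(D)}_{a,b,V}=\E^{(N)}_{a,b,V}$ follows immediately.

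To prove the identity for the nonnegative potential $V^+$, I would apply Theorem \ref{uniqueness_magnetic_forms} with the potential in the theorem taken to be our $V^+$ and the auxiliary potential chosen as $\tilde V = 0$. The three hypotheses of that theorem are then immediate: the inequality $\tilde V = 0 \le V^+$ holds since $V^+ \ge 0$; the negative part $\tilde V^- = 0$ is trivially relatively form bounded with respect to $\E^{(N)}_{a,0,0}$ (with relative bound $0$); and the required coincidence $\E^{(D)}_{a,0,\tilde V}=\E^{(N)}_{a,0,\tilde V}$ is, because $\tilde V = 0$, exactly the hypothesis $\E^{(N)}_{a,0,0}=\E^{(D)}_{a,0,0}$. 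Theorem \ref{uniqueness_magnetic_forms} therefore yields $\E^{(D)}_{a,b,V^+}=\E^{(N)}_{a,b,V^+}$, which combined with the first step completes the argument.

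Since every step is a direct invocation of an already-established result, I do not anticipate any genuine obstacle. The only point requiring a moment's thought is the recognition that one cannot apply Theorem \ref{uniqueness_magnetic_forms} directly with $\tilde V = 0$ to the signed potential $V$, because the inequality $0 \le V$ may fail when $V^-$ is nontrivial; this is precisely why the preliminary reduction via Lemma \ref{negative_part_negligible} to the positive part $V^+$ is needed before the theorem can be applied with the clean choice $\tilde V = 0$.
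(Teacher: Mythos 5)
Your proposal is correct and takes essentially the same route as the paper, whose entire proof is ``Just combine Theorem \ref{uniqueness_magnetic_forms} and Lemma \ref{negative_part_negligible}''; you have simply made the combination explicit (apply the theorem to the potential $V^+$ with $\tilde V=0$, then use the lemma to pass from $V^+$ back to the signed potential $V$). Your closing remark about why the reduction to $V^+$ is necessary---namely that $0\leq V$ may fail---is exactly the point that makes the lemma indispensable here.
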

\begin{proof}
Just combine Theorem \ref{uniqueness_magnetic_forms} and Lemma \ref{negative_part_negligible}.
\end{proof}

We conclude the section giving some examples in which the minimal and maximals Schrödinger form without magnetic field coincide and in which our theorem is thus applicable.

\begin{example}
If $\Omega=\IR^n$, then $-\Delta$ is essentially self-adjoint on $C_c^\infty(\IR^n)$ and therefore $\E^{(N)}_{1,0,0}=\E^{(D)}_{1,0,0}$. This is a classical result in the theory of elliptic differential operators (see \cite{Weid}, Satz 11.15, for example).
\end{example}
This example is a nice illustration of the strength of our method: From this comparably simple result concerning an elliptic differential operator with constant coefficients we can infer $\E^{(N)}_{1,b,V}=\E^{(D)}_{1,b,V}$ under quite weak regularity assumptions on $b$ and $V$. 

For divergence type forms without potential a characterization of $\E^{(N)}=\E^{(D)}$ is given in \cite{RS11} (also compare \cite{GM13} for analog results on weighted manifolds) in terms of the capacity of the boundary: For a measurable set $A\subset \overline\Omega$ the capacity is defined as
\begin{align*}
\mathrm{cap}(A)=\inf\{\norm{u}_{a,0,0}^2\mid u\in D(\E^{(N)}_{a,0,0}), \text{ there is }U\supset A\text{ open: }u|_{U\cap \Omega}=1\}.
\end{align*}
\begin{example}
Let $a\in W^{1,\infty}(\Omega;M_n(\IR)^+)$. Then $\E^{(N)}_{a,0,0}=\E^{(D)}_{a,0,0}$ iff $\mathrm{cap}(\partial \Omega)=0$.
\end{example}

For a bounded domain $\Omega$ the (non-magnetic) forms with Dirichlet and Neumann boundary conditions coincide if the potential is sufficiently singular at the boundary $\partial \Omega$. An estimate in the one-dimensional case follows from Thm. X.10 in \cite{RS2}.

\begin{example}
Assume that $V\in C((0,1))$ is positive and 
\begin{align*}
V(x)\geq \frac  3 4\frac 1{d(x)^2}
\end{align*}
near $0$ and $1$, where $d(x)=\min\{x,1-x\}$. Then $-\Delta+V$ is essentially self-adjoint on $C_c^\infty((0,1))$ and consequently $\E^{(D)}_{1,0,V}=\E^{(N)}_{1,0,V}$.
\end{example}

This result has been generalized to more than one dimension and improved in various directions. We just mention as one example the conditions given in Thm. 2 of \cite{NN09}.

\begin{example}
Let $\Omega\subset\IR^n$ be a bounded domain with $C^2$-boundary. Assume that $V=V_1+V_2$, $V_1\in L^\infty(\Omega)$ and
\begin{align*}
V_2(x)\geq \frac 1{d(x,\partial\Omega)^2}\left(\frac  3 4-\frac 1{\ln d(x,\partial \Omega)^{-1}}-\frac 1 {\ln d(x,\partial\Omega)^{-1}\ln\ln d(x,\partial\Omega)^{-1}}-\dots\right)
\end{align*}
for all $x\in\Omega$ near $\partial \Omega$. Then $-\Delta+V$ is essentially self-adjoint on $C_c^\infty(\Omega)$ and consequently $\E^{(D)}_{1,0,V}=\E^{(N)}_{1,0,V}$.
\end{example}

\appendix
\section{Forms and semigroups}\label{appendix}
In this appendix we present some basic definitions and facts about forms and semigroups on Hilbert spaces. Further information can be found for example in \cite{Kat}, Chapters VI. and IX.

\begin{definition}[Form]
Let $\H$ be a Hilbert space. A densely defined symmetric, sesquilinear form $q$ on $\H$ is map $q\colon D(q)\times D(q)\lra\IC$, where $D(q)\subset \H$ is a dense subspace, such that
\begin{itemize}
\item{}$q(u,v)=\overline{q(v,u)}$,
\item{}$q(\alpha_1 u_1+\alpha_2 u_2,v)=\alpha_1 q(u_1,v)+\alpha_2 q(u_2,v)$
\end{itemize}
for all $\alpha_1,\alpha_2\in\IC,\,u_1,u_2,v\in \H$.\\
A densely defined, symmetric, sesquilinear form $q$ is lower semibounded with lower bound $-\lambda\in\IR$ if
\begin{align*}
q(u,u)\geq-\lambda\langle u,u\rangle_\H
\end{align*}
holds for all $u\in D(q)$.
\end{definition}

Throughout this article, all forms are assumed to be densely defined,  sesquilinear, symmetric and lower semibounded and we will simply call them forms. If we want to emphasize the domain $D(q)$ of $q$, we will write $(q,D(q))$ for the form $q\colon D(q)\times D(q)\lra\IC$.\\
A form $q$ gives rise to a quadratic form on $\H$ via 
\begin{align*}
q\colon \H\lra (-\infty,\infty],\,q(u):=\begin{cases}q(u,u)&\colon u\in D(q),\\\infty &\colon u\notin D(q).\end{cases}
\end{align*}
We shall not distinguish notationally between these two maps.

\begin{lemma}
Let $(q,D(q))$ be a form on $\H$ with lower bound $-\lambda$. Then for every $\alpha>\lambda$,
\begin{align*}
q_\alpha\colon D(q)\times D(q)\lra\IC,\,\langle u,v\rangle_{q,\alpha}=\alpha\langle u,v\rangle_{\H}+q(u,v)
\end{align*}
defines an inner product on $D(q)$. Furthermore, for any $\alpha,\beta>\lambda$, the norms induced by $q_\alpha$ and $q_\beta$ are equivalent.
\end{lemma}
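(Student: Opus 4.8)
The plan is to check the three defining properties of a (complex) inner product for $\langle\cdot,\cdot\rangle_{q,\alpha}$ and then to deduce the equivalence of the induced norms from the lower-bound estimate. Sesquilinearity and Hermitian symmetry require no real work: $\langle\cdot,\cdot\rangle_{q,\alpha}$ is the sum $\alpha\langle\cdot,\cdot\rangle_\H+q$, and both summands are sesquilinear and satisfy the conjugate-symmetry relations $\langle u,v\rangle_\H=\overline{\langle v,u\rangle_\H}$ and $q(u,v)=\overline{q(v,u)}$, so the sum inherits these properties immediately from the definition of a form.

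The one substantive point in the first assertion is positive definiteness, and this is precisely where the hypothesis $\alpha>\lambda$ enters. For $u\in D(q)$ the lower-bound inequality $q(u,u)\geq-\lambda\norm{u}_\H^2$ gives
\[
\langle u,u\rangle_{q,\alpha}=\alpha\norm{u}_\H^2+q(u,u)\geq(\alpha-\lambda)\norm{u}_\H^2,
\]
which is strictly positive for $u\neq 0$ because $\alpha-\lambda>0$. Writing $\norm{u}_{q,\alpha}=\langle u,u\rangle_{q,\alpha}^{1/2}$ for the induced norm, this same inequality yields the key estimate $\norm{u}_\H^2\leq(\alpha-\lambda)^{-1}\norm{u}_{q,\alpha}^2$, i.e.\ the ambient $\H$-norm is dominated by each form norm.

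For the equivalence of $\norm{\cdot}_{q,\alpha}$ and $\norm{\cdot}_{q,\beta}$ I would work with squared norms and exploit that they differ only through the $\H$-term,
\[
\norm{u}_{q,\alpha}^2-\norm{u}_{q,\beta}^2=(\alpha-\beta)\norm{u}_\H^2.
\]
Assuming without loss of generality $\alpha\geq\beta$, the right-hand side is nonnegative, so $\norm{u}_{q,\beta}\leq\norm{u}_{q,\alpha}$ at once; for the reverse direction I would insert the estimate $\norm{u}_\H^2\leq(\beta-\lambda)^{-1}\norm{u}_{q,\beta}^2$ into the displayed difference to obtain $\norm{u}_{q,\alpha}^2\leq\frac{\alpha-\lambda}{\beta-\lambda}\norm{u}_{q,\beta}^2$. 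These two bounds give equivalence with explicit constants, and the case $\beta>\alpha$ follows by interchanging the roles of $\alpha$ and $\beta$. The argument is entirely elementary, so there is no genuine obstacle; the only thing to watch is that the strict inequalities $\alpha,\beta>\lambda$ are exactly what keep the denominators $\alpha-\lambda$ and $\beta-\lambda$ positive and hence the equivalence constants finite.
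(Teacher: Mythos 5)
Your proposal is correct and follows essentially the same route as the paper: both verify sesquilinearity and symmetry by inheritance, derive positive definiteness from the lower bound $q(u,u)\geq-\lambda\norm{u}_\H^2$, and obtain the equivalence of norms with the same constant $\frac{\alpha-\lambda}{\beta-\lambda}$ by splitting off a multiple of $\norm{u}_\H^2$ and applying the coercivity estimate. The only cosmetic difference is that you organize the equivalence step around the difference identity $\norm{u}_{q,\alpha}^2-\norm{u}_{q,\beta}^2=(\alpha-\beta)\norm{u}_\H^2$, whereas the paper scales the nonnegative part $q(u,u)+\lambda\norm{u}_\H^2$ directly.
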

\begin{proof}
Let $\alpha>\lambda$. Since the $\langle\cdot,\cdot\rangle_\H$ and $q$ are sesquilinear and symmetric, so is $q_\alpha$. Moreover, we have for all $u\in D(q)$:
\begin{align*}
q(u,u)=\alpha\langle u,u\rangle_\H+q(u,u)= (\langle u,u\rangle_\H+\lambda q(u,u))+(\alpha-\lambda)\langle u,u\rangle_\H\geq 0
\end{align*}
with equality if and only if $\langle u,u\rangle_\H=0$, that is, $u=0$.\\
Now let $\alpha\geq \beta>\lambda$. Then we have for all $u\in\H$:
\begin{align*}
\beta\langle u,u\rangle_\H+q(u,u)&\leq\alpha\langle u,u\rangle_\H+q(u,u)\\
&=(\alpha-\lambda)\langle u,u\rangle_\H+q(u,u)+\lambda\langle u,u\rangle_\H\\
&\leq (\alpha-\lambda)\langle u,u\rangle_\H+\frac{\alpha-\lambda}{\beta-\lambda}(q(u,u)+\lambda\langle u,u\rangle_\H)\\
&=\frac{\alpha-\lambda}{\beta-\lambda}(q(u,u)+\beta\langle u,u\rangle_\H),
\end{align*}
that is, $q_\alpha$ and $q_\beta$ induce equivalent norms.
\end{proof}

Usually we omit the index $\alpha$ and mean by $\langle \cdot,\cdot\rangle_q$ the inner product $q_\alpha$ for some fixed $\alpha>\lambda$. The induced norm will be denoted by $\norm\cdot_q$.

\begin{definition}[Closed and closable forms]
A form $(q,D(q))$ on $\H$ is called closed if $(D(q),\langle\cdot,\cdot\rangle_q)$ is a Hilbert space.\\
A form $(q^\prime,D(q^\prime))$ is called extension of $(q,D(q))$, $(q,D(q))\subset (q^\prime,D(q^\prime))$ or in short $q\subset q^\prime$, if $D(q)\subset D(q^\prime)$ and $q(u,v)=q^\prime(u,v)$ for all $u,v\in D(q)$.\\
The form $q$ is called closable if it has a closed extension on $\H$.
\end{definition}

\begin{example}
Let $\H$ be a Hilbert space and $T\colon D(T)\lra \H$ a symmetric operator such that there is a $\lambda>0$ such that $\langle Tu,u\rangle\geq -\lambda\norm{u}^2$ for all $u\in\H$. Then the form
\begin{align*}
q\colon D(T)\times D(T)\lra\IC,\,q(u,v)=\langle T u,v\rangle
\end{align*}
is closable. Its closure is denoted by $q_T$.
\end{example}
\begin{proof}
Let $(u_n)$ be a $\|\cdot\|_q$-Cauchy sequence in $D(T)$ that converges to $0$ in $\H$. Then we have
\begin{align*}
\norm{u_n}_q^2&=\abs{\langle u_n,u_n-u_m\rangle_q+\langle u_n,u_m\rangle_q}\\
&\leq \norm{u_n}_q\norm{u_n-u_m}_q+\norm{(T+1+\lambda)u_n}\norm{u_m},
\end{align*}
hence $\norm{u_n}_q\to 0,\,n\to\infty$. Thus, we can embed the completion of $D(T)$ under $\norm\cdot_q$ into $\H$ and the extension of $q$ to the completion is a closed extension of $q$.
\end{proof}

Indeed, all closed forms are of the type $q_T$ for some lower semi-bounded self-adjoint operator $T$, as the next proposition shows.

\begin{proposition}
Let $q$ be a closed form on $\H$. Then 
\begin{align*}
D(T)=\{u\in D(q)\mid \exists w\in \H\;\forall v\in D(q)\colon q(u,v)=\langle w,v\rangle\},\,Tu=w
\end{align*}
defines a lower semi-bounded, self-adjoint operator on $\H$ and $q=q_T$. The operator $T$ is called the generator of $q_T$.
\end{proposition}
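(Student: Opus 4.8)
The plan is to realize $T$ as a shifted inverse of a bounded operator obtained from the Riesz representation theorem on the form Hilbert space. Fix $\alpha>\lambda$, so that $(D(q),\langle\cdot,\cdot\rangle_{q,\alpha})$ is a Hilbert space by the closedness of $q$, and observe that the lower bound gives $\norm{v}_\H\leq(\alpha-\lambda)^{-1/2}\norm{v}_q$. Hence for each $f\in\H$ the functional $v\mapsto\langle f,v\rangle_\H$ is bounded on $(D(q),\norm\cdot_q)$, and the Riesz representation theorem furnishes a unique $Gf\in D(q)$ with
\begin{align*}
\langle f,v\rangle_\H=\langle Gf,v\rangle_{q,\alpha}=q(Gf,v)+\alpha\langle Gf,v\rangle_\H\qquad\text{for all }v\in D(q).
\end{align*}
This defines a bounded linear operator $G\colon\H\lra\H$.

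Next I would establish the structural properties of $G$. Injectivity is immediate from the density of $D(q)$: if $Gf=0$ then $\langle f,v\rangle_\H=0$ for all $v\in D(q)$, so $f=0$. Choosing $v=Gg$ and exploiting the symmetry of $q$ gives $\langle f,Gg\rangle_\H=\langle Gf,g\rangle_\H$, so $G$ is self-adjoint; in particular $\overline{\im G}=(\ker G)^\perp=\H$, so $G$ has dense range. Consequently the inverse $S:=G^{-1}$ on $D(S)=\im G$ is self-adjoint, being the inverse of an injective self-adjoint operator. I would also record that $\im G$ is dense in $(D(q),\norm\cdot_q)$: if $v\in D(q)$ satisfies $\langle Gf,v\rangle_{q,\alpha}=0$ for all $f\in\H$, then $\langle f,v\rangle_\H=0$ for all $f$, whence $v=0$.

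I would then verify that $T=S-\alpha$, with $D(T)=\im G$, coincides with the operator defined in the statement. For $u=Gf\in\im G$ the defining relation reads $q(u,v)=\langle f-\alpha u,v\rangle_\H=\langle(S-\alpha)u,v\rangle_\H$ for all $v\in D(q)$, so $u$ lies in the domain described in the proposition with $Tu=(S-\alpha)u$. Conversely, if $u\in D(q)$ and $w\in\H$ satisfy $q(u,v)=\langle w,v\rangle_\H$ for all $v\in D(q)$, then $\langle u,v\rangle_{q,\alpha}=\langle w+\alpha u,v\rangle_\H$, whence $u=G(w+\alpha u)\in\im G$; so the two domains agree and $T=S-\alpha$ is self-adjoint. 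Lower semiboundedness follows from $\langle Tu,u\rangle_\H=q(u,u)\geq-\lambda\norm{u}_\H^2$ for $u\in D(T)$.

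Finally, $q=q_T$ results from the density already noted: $D(T)=\im G$ is dense in $(D(q),\norm\cdot_q)$, and on $D(T)$ the form $(u,v)\mapsto\langle Tu,v\rangle_\H$ agrees with $q$, so the closure $q_T$ of this restricted form coincides with the closed form $q$. The main obstacle is the careful bookkeeping identifying the abstractly defined domain $\im G$ with the domain in the statement and invoking the self-adjointness of the inverse; once $G$ and its elementary properties are in place, the remaining steps are routine.
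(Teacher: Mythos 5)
Your proof is correct. Note that the paper itself gives no proof of this proposition: it is stated in the appendix as a known fact, with a pointer to Kato's book, so there is no in-paper argument to compare against. What you have written out is the standard proof of the representation theorem for closed semibounded forms: represent the functional $v\mapsto\langle f,v\rangle_\H$ (bounded on the form Hilbert space because $\norm{v}_\H\leq(\alpha-\lambda)^{-1/2}\norm{v}_{q,\alpha}$) by Riesz, obtain the everywhere-defined, injective operator $G$ with dense range, and set $T=G^{-1}-\alpha$. All the key steps are sound: the symmetry computation $\langle f,Gg\rangle_\H=\langle Gf,Gg\rangle_{q,\alpha}=\langle Gf,g\rangle_\H$ gives self-adjointness of $G$ (indeed, being symmetric and everywhere defined, $G$ is automatically bounded and self-adjoint by Hellinger--Toeplitz, so your separate boundedness claim is not even needed); the inverse of an injective self-adjoint operator is self-adjoint on its dense domain $\im G$; the two-way identification of $\im G$ with the domain in the statement is carried out in both directions, the converse inclusion correctly exploiting uniqueness of the Riesz representative; and the density of $\im G$ in $(D(q),\norm\cdot_{q,\alpha})$ is exactly what is needed to conclude $q_T=q$, since the closure of a closed form restricted to a form-dense subspace is the form itself. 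The only cosmetic point worth flagging is that under the paper's convention (forms linear in the first argument) the functional $v\mapsto\langle f,v\rangle_\H$ is conjugate-linear in $v$, so one applies the Riesz theorem to its complex conjugate; this does not affect any step of the argument.
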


\begin{remark}
Notice that we define the generator so that it is lower semi-bounded and will also adapt that convention for semi-groups and resolvents. In some texts the opposite sign convention is chosen so that $T$ is upper semi-bounded, and sometimes even sign conventions are mixed so that the generator of a form is minus the generator of the associated semi-group. This also effects the sign in Kato's inequality---according to our convention, $-\Delta$ is the generator of a form.
\end{remark}

In applications the Hilbert space in questions is often an $L^2$-space. This does not only carry the Hilbert space structure but also an order structure (given by the almost everywhere order). Forms behaving well in a certain sense with respect to the order structure are characterized by the Beurling-Deny criteria. Those forms are intensively studied, see for example \cite{Fu} for an introduction.\\

\begin{definition}[Dirichlet form]
Let $(X,m)$ be a measure space. A closed form $q$ on $L^2(X,m)$ with lower bound $0$ is called positive if it satisfies the first Beurling-Deny criterion:\\
For all $u\in D(q)$, $\abs{u}\in D(q)$ and $q(\abs{u})\leq q(u)$.\\
It is called Dirichlet form if it satiesfies the second Beurling-Deny criterion:\\
For all $u\in D(q)$, $(0\vee v)\wedge 1\in D(q)$ and $q((0\vee v)\wedge 1)\leq q(u)$.
\end{definition}

We will now give a number of examples of Dirichlet forms.

\begin{example}
Let $(X,m)$ be a measure space and $V\colon X\lra [0,\infty)$ measurable. Then
\begin{align*}
D(q)=\{u\in L^2(X,m)\mid V\abs{u}^2\in L^1(X,m)\},\,q(u,v)=\int_X Vu\overline v\,dm
\end{align*}
defines a Dirichlet form on $L^2(X,m)$. Its generator is the multiplication operator $M_V$ on $L^2(X,m)$.
\end{example}

\begin{example}\label{energy-form_domain}
Let $\Omega\subset\IR^n$ be an open subset. Then
\begin{align*}
D(\E)=C_c^\infty(\Omega),\,\E(u,v)=\int_\Omega \langle \nabla u(x),\nabla v(x)\rangle\,dx
\end{align*}
defines a closable form and its closure is a Dirichlet form. The generator of the closure is minus the Laplacian with Dirichlet boundary conditions, see Section \ref{Magnetic Schrödinger operators in Euclidean space} for details (also remember the remark on the sign convention).
\end{example}

\begin{example}
The form defined by
\begin{align*}
D(q)&=\{u\in \ell^2(\IZ^d)\mid \sum_{\abs{m-n}=1}\abs{u(m)-u(n)}^2<\infty\},\\
q(u,v)&=\frac 1 2\sum_{\abs{m-n}=1}(u(m)-u(n))\overline{(v(m)-v(n))}
\end{align*}
is a Dirichlet form on $\ell^2(\IZ^d)$. Its generator is a discrete analog of the Laplacian, see Section \ref{Magnetic Schrödinger operators on graphs} for details on forms of this type.
\end{example}

\begin{definition}[Semigroup]
Let $\H$ be a Hilbert space. A family $(P_t)_{t\geq 0}$ of bounded operators on $\H$ is called a strongly continuous semigroup of symmetric operators of type $\lambda>0$ if the following properties hold:
\begin{itemize}
\item $P_sP_t=P_{s+t},\,P_0=\id$ for all $s,t\geq 0$,
\item $\lim_{t\downto 0}P_t f=f$ for all $f\in\H$,
\item $P_t^\ast=P_t$ for all $t\geq 0$,
\item There is an $M\geq 0$ such that$\norm{P_t}\leq M e^{\lambda t}$ for all $t\geq 0$.
\end{itemize}
The family $(P_t)_{t\geq 0}$ will be called contraction semigroup if $\norm{P_t}\leq 1$ for all $t\geq 0$.\\
In this article, all semigroups will be assumed to be strongly continuous semigroups of symmetric operators of positive type and simply called semigroups.
\end{definition}

\begin{example}
Let $A$ be a self-adjoint operator on $H$ with lower bound $-\lambda$. Then the spectrum $\sigma(A)$ is contained in $[-\lambda,\infty)$ and therefore $e^{-t\cdot}$ is bounded on $\sigma(A)$ by $e^{\lambda t}$ and the operators $(e^{-tA})_{t\geq 0}$, defined via spectral calculus, form a semigroup.
\end{example}

As in the case of quadratic forms, all semigroups are of this form:
\begin{proposition}
Let $(P_t)$ be a semigroup on $\H$. Then
\begin{align*}
D(A)=\{u\in\H\mid\lim_{t\downto 0}\frac 1 t( u- P_t u)\text{ exists}\}, Au=\lim_{t\downto 0}\frac 1 t(u-P_t u)
\end{align*}
defines a lower semi-bounded, self-adjoint operator on $\H$ and $P_t=e^{-tA}$ for all $t\geq 0$.
\end{proposition}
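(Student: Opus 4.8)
The plan is to proceed by the standard resolvent (Laplace transform) route, which fits the resolvent--semigroup machinery already used in the proof of Theorem \ref{thm_char_domination}. Throughout, the decisive structural observation is that the semigroup consists of positive operators: since each $P_t$ is symmetric, $P_t = P_{t/2}P_{t/2} = P_{t/2}^\ast P_{t/2}\geq 0$, and if $P_t f = 0$ then $\norm{P_{t/2}f}^2 = \langle P_t f, f\rangle = 0$, so by iteration $P_{t/2^n}f = 0$ and strong continuity forces $f = 0$; thus each $P_t$ is positive and injective. One also records $\norm{P_t} = \norm{P_{t/2}^\ast P_{t/2}} = \norm{P_{t/2}}^2$, so that the norm is purely exponential, $\norm{P_t} = e^{\omega t}$ with $\omega\leq\lambda$; this is what will pin down the lower bound.

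First I would show that $A$ is densely defined and symmetric with lower bound $-\lambda$. Density follows from the smoothing vectors $f_\epsilon = \tfrac1\epsilon\int_0^\epsilon P_s f\,ds$: a telescoping computation gives $\tfrac1t(f_\epsilon - P_t f_\epsilon)\to\tfrac1\epsilon(f-P_\epsilon f)$ as $t\downto 0$, so $f_\epsilon\in D(A)$, while $f_\epsilon\to f$ by strong continuity. Symmetry is immediate from $P_t^\ast = P_t$, since
\begin{align*}
\langle Au,v\rangle = \lim_{t\downto 0}\tfrac1t\langle u-P_t u,v\rangle = \lim_{t\downto 0}\tfrac1t\langle u,v-P_t v\rangle = \langle u,Av\rangle.
\end{align*}
For the lower bound, $\langle Au,u\rangle = \lim_{t\downto 0}\tfrac1t(\norm u^2 - \langle P_t u,u\rangle)$, and $\langle P_t u,u\rangle\leq e^{\omega t}\norm u^2\leq e^{\lambda t}\norm u^2$ yields $\langle Au,u\rangle\geq -\lambda\norm u^2$.

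Next comes the heart of the matter, self-adjointness. For $\alpha>\lambda$ I would introduce the candidate resolvent
\begin{align*}
R_\alpha f = \int_0^\infty e^{-\alpha t}P_t f\,dt,
\end{align*}
which converges absolutely because $\norm{P_t}\leq e^{\lambda t}$. The standard identities $R_\alpha(\H)\subset D(A)$, $(A+\alpha)R_\alpha = \id$ and $R_\alpha(A+\alpha) = \id$ on $D(A)$ follow by telescoping the difference quotient of $P_t$ under the integral. Hence $A+\alpha$ is a bijection of $D(A)$ onto $\H$ with bounded inverse $R_\alpha$; since $R_\alpha^\ast = R_\alpha$ (from $P_t^\ast = P_t$) is a bounded, injective, self-adjoint operator, its inverse $A+\alpha$ is self-adjoint, and therefore so is $A$. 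Combined with the lower bound, $A$ is a lower semi-bounded self-adjoint operator and $R_\alpha = (A+\alpha)^{-1}$ is exactly its resolvent.

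Finally I would identify $P_t = e^{-tA}$. The spectrally defined semigroup $(e^{-tA})$ has, by the Laplace transform of the spectral calculus, the same resolvent $(A+\alpha)^{-1} = R_\alpha$ as $(P_t)$; since a semigroup is recovered from its resolvent through $e^{-tA} = \lim_{n\to\infty}(n/t)^n(A+n/t)^{-n}$---the very formula used in Theorem \ref{thm_char_domination}---and the same limit built from $R_\alpha$ reconstructs $P_t$, the two semigroups coincide. Alternatively one checks that $t\mapsto P_t f$ and $t\mapsto e^{-tA}f$ solve the same initial value problem $\dot w = -Aw$, $w(0) = f$, for $f\in D(A)$, and invokes uniqueness together with density. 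I expect the main obstacle to be precisely this passage from symmetry to self-adjointness: bare symmetry and the lower bound are routine, but promoting them to self-adjointness genuinely requires the surjectivity of $A+\alpha$, that is, the convergence and resolvent identities for $R_\alpha$, after which the matching of $(P_t)$ with $(e^{-tA})$ is forced through the shared resolvent.
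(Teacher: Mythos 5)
The paper itself gives no proof of this proposition: it appears in Appendix \ref{appendix} as background, with a pointer to \cite{Kat}, Chapters VI and IX. Measured against the standard literature, your argument is correct and is essentially the classical Hille--Yosida argument specialized to the self-adjoint case: positivity of each $P_t$ from $P_t=P_{t/2}^\ast P_{t/2}$, density of $D(A)$ via the averaged vectors $f_\epsilon$, symmetry and the lower bound from $P_t^\ast=P_t$ together with the norm bound, and then self-adjointness by exhibiting the Laplace transform $R_\alpha$ as a bounded, injective, symmetric operator whose inverse is $A+\alpha$. That last step (the inverse of a bounded injective self-adjoint operator is self-adjoint, since its range is dense) is indeed the cleanest way to get self-adjointness without any deficiency-index bookkeeping, and you correctly identify it as the crux.

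Two points deserve tightening. First, the claim $\norm{P_t}=e^{\omega t}$ exactly is more than you need and more than your argument yields (strong continuity does not give norm continuity, so the dyadic identity only controls dyadic times); what the $C^\ast$-identity $\norm{P_{2s}}=\norm{P_s}^2$ does give, iterating \emph{upward}, is $\norm{P_t}=\norm{P_{2^nt}}^{2^{-n}}\leq\bigl(Me^{\lambda 2^nt}\bigr)^{2^{-n}}\to e^{\lambda t}$, and this inequality $\norm{P_t}\leq e^{\lambda t}$ is all that the lower bound $\langle Au,u\rangle\geq-\lambda\norm{u}^2$ requires. Second, in the final identification $P_t=e^{-tA}$, the assertion that ``the same limit built from $R_\alpha$ reconstructs $P_t$'' is itself a theorem (the Yosida/Post--Widder inversion) that you would have to prove; since at that stage both $(P_t)$ and $(e^{-tA})$ are strongly continuous with the same Laplace transform, the shortest rigorous finish is uniqueness of the Laplace transform applied to the continuous scalar functions $t\mapsto\langle P_tf,g\rangle$ and $t\mapsto\langle e^{-tA}f,g\rangle$; alternatively, the Cauchy-problem uniqueness you sketch works once one checks $P_tD(A)\subset D(A)$ and $\frac{d}{dt}P_tf=-AP_tf$ for $f\in D(A)$, which follows from commutation of $P_t$ with the difference quotients. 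With these repairs the proof is complete.
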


The remark on the sign convention for the generator of a form applies here as well. If the form $q$ has the same generator as the semigroup $(P_t)$, then $q$ is called the form associated with $(P_t)$ and vice versa. The form associated with a semigroup can easily be obtained by the following formula:

\begin{lemma}
Let $(P_t)$ be a semigroup on $\H$. Then the associated form is given by
\begin{align*}
D(q)=\{u\in\H\mid \lim_{t\downto 0}\langle u-P_t u,u\rangle<\infty\},\,q(u,v)=\lim_{t\downto 0}\langle u-P_t u,v\rangle.
\end{align*}
\end{lemma}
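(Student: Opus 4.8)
The plan is to reduce everything to the spectral theorem for the generator. By the preceding proposition, $P_t=e^{-tA}$ for a self-adjoint operator $A$ on $\H$ with lower bound $-\lambda$, and the associated form is $q=q_A$, the closure of $u\mapsto\langle Au,u\rangle$ on $D(A)$; in spectral terms $D(q)=D((A+\lambda)^{1/2})$ and $q(u,v)=\int_{-\lambda}^\infty s\,d\mu_{u,v}(s)$, where $E$ is the spectral measure of $A$ and $\mu_{u,v}(\cdot)=\langle E(\cdot)u,v\rangle$. Writing $\mu_u=\mu_{u,u}$ (a finite positive measure of total mass $\norm u^2$), spectral calculus gives the key identity $\tfrac1t\langle u-P_tu,v\rangle=\int_{-\lambda}^\infty\tfrac{1-e^{-ts}}{t}\,d\mu_{u,v}(s)$, so that the whole lemma becomes a statement about the limit $t\downto0$ of these integrals.

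Next I would record the two elementary facts about $f_t(s):=\tfrac{1-e^{-ts}}{t}$ that drive the proof. Differentiating in $t$ and using $e^{x}\ge 1+x$ one checks that $t\mapsto f_t(s)$ is decreasing for every fixed $s\in\IR$, so that $f_t(s)\uparrow s$ as $t\downto0$; and since $f_t$ is increasing in $s$ with limit $s$, one obtains the uniform bound $\abs{f_t(s)}\le\abs s$ on all of $\IR$, together with the lower bound $f_t(s)\ge f_{t_0}(-\lambda)$ on $[-\lambda,\infty)$ whenever $t\le t_0$. These are the only analytic inputs; the rest is measure theory.

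For the domain characterization and the value of the quadratic form I would apply monotone convergence to the shifted integrand $f_t(s)+\lambda\uparrow s+\lambda\ge0$. The main obstacle is that $f_t$ itself is \emph{not} nonnegative, because the spectrum may reach into $[-\lambda,0)$; shifting by $\lambda$ repairs this, and since $f_{t_0}+\lambda$ is bounded on $[-\lambda,\infty)$ it is integrable against the finite measure $\mu_u$, so monotone convergence is legitimate. It yields $\tfrac1t\langle u-P_tu,u\rangle\uparrow\int_{-\lambda}^\infty s\,d\mu_u(s)\in(-\infty,\infty]$, the limit being finite exactly when $\int(s+\lambda)\,d\mu_u<\infty$, i.e. when $u\in D(q)$, and equal to $q(u)$ in that case. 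This is precisely the asserted description of $D(q)$ and of $q(u)=q(u,u)$.

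Finally, for $u,v\in D(q)$ I would upgrade to the sesquilinear identity. Here monotone convergence is unavailable, since $\mu_{u,v}$ is a complex measure, so I would instead invoke dominated convergence with respect to the total variation $\abs{\mu_{u,v}}$, using the dominating function $\abs s$: by Cauchy--Schwarz for the spectral measure, $\int\abs s\,d\abs{\mu_{u,v}}\le\bigl(\int\abs s\,d\mu_u\bigr)^{1/2}\bigl(\int\abs s\,d\mu_v\bigr)^{1/2}<\infty$ since $u,v\in D(q)$. Thus $\tfrac1t\langle u-P_tu,v\rangle\to\int_{-\lambda}^\infty s\,d\mu_{u,v}(s)=q(u,v)$. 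Alternatively, once the diagonal case is settled one may finish purely by polarization, noting that both sides are sesquilinear in $(u,v)$ and agree on the diagonal.
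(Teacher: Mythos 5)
The paper itself offers no proof of this lemma: it is stated in the appendix as a known fact (with a pointer to Kato's book), and it is used in the proof of Theorem \ref{thm_char_domination} in the form carrying the factor $\frac1t$, which is missing from the printed statement --- without $\frac1t$ the limit is $0$ for every $u\in\H$ by strong continuity, so the domain would be all of $\H$. Your argument proves the intended, corrected statement, and your route --- spectral theorem, monotone convergence on the diagonal, dominated convergence (or polarization) off the diagonal --- is the standard one and is structurally sound.

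One claimed inequality is false, however: $\abs{f_t(s)}\le\abs{s}$ does \emph{not} hold on all of $\IR$. Since $t\mapsto f_t(s)$ is decreasing and $f_t(s)\uparrow s$ as $t\downto 0$, one has $f_t(s)\le s$ for all $s$; for $s<0$ this gives $f_t(s)\le s<0$, hence $\abs{f_t(s)}\ge\abs{s}$, with the inequality strict in general (concretely $f_1(-1)=1-e$ and $\abs{1-e}>1$). So ``the dominating function $\abs{s}$'' in your dominated convergence step is not actually dominating on $[-\lambda,0)$. The repair is immediate from bounds you already state: for $t\le t_0$ and $s\in[-\lambda,\infty)$ one has $f_{t_0}(-\lambda)\le f_t(s)\le\max(s,0)$, hence $\abs{f_t(s)}\le\abs{s}+\abs{f_{t_0}(-\lambda)}$, and the added constant is integrable against the finite measure $\abs{\mu_{u,v}}$, so dominated convergence goes through with this corrected majorant. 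A similar imprecision occurs in the monotone convergence step --- the shift by $\lambda$ does \emph{not} make the integrand nonnegative, since $f_t(-\lambda)+\lambda\le 0$ --- but there your argument is nevertheless valid because you invoke the integrable minorant $f_{t_0}+\lambda$ rather than nonnegativity. Alternatively, the polarization argument you sketch at the end bypasses complex measures and domination entirely and is the cleanest way to finish once the diagonal case is settled.
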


As in the case of forms, we are interested in semigroups on $L^2$ that behave well with respect to the order structure.
\begin{definition}[Positivity preserving and Markovian semigroups]
Let $(X,m)$ be a measure space and $(P_t)$ be a semigroup on $L^2(X,m)$.
\begin{itemize}
\item[(a)]The semigroup is called positivity preserving if $P_t L^2_+(X,m)\subset L^2_+(X,m)$.
\item[(b)]The semigroup is called Markovian if $P_t C\subset C$, where $C=\{f\in L^2(X,m)\mid 0\leq f\leq 1\}$.
\end{itemize}
\end{definition}

\begin{example}
Define
\begin{align*}
p_t\colon \IR^n\lra [0,\infty),\,p_t(x)=\frac 1{(4\pi t)^{\frac n 2}}e^{-\frac{\abs{x}^2}{4t}}.
\end{align*}
Then the operators $P_t\colon L^2(\IR^n)\lra L^2(\IR^n), P_t f=p_t\ast f$, $t\geq 0$, form a Markovian semigroup on $L^2(\IR^n)$.\\
Its generator is $-\Delta\colon H^2(\IR^n)\lra L^2(\IR^n)$ and the associated form is the closure of the form $\E$ from Example \ref{energy-form_domain} for $\Omega=\IR^n$.
\end{example}

These properties can be characterized by the associated forms. Indeed, the following more general theorem by Ouhabaz holds (\cite{Ouh96}, Thm. 2.1 and Proposition 2.3, \cite{Ouh99}, Thm. 3; also compare \cite{MVV}, Thm. 2.1).

\begin{proposition}\label{invariance_ouhabaz}
Let $\H$ be a Hilbert space, $C$ a closed, convex subset of $\H$, $P$ the projection onto $C$, $(Q_t)$ a semigroup on $\H$ with generator $T$ and $q$ the associated form with lower bound $-\lambda$. Then the following are equivalent:
\begin{itemize}
\item[(i)]$Q_t C\subset C$ for all $t\geq 0$
\item[(ii)]$\alpha(T+\alpha)^{-1}C\subset C$ for all $\alpha>\lambda$
\item[(iii)]$P(D(q))\subset D(q)$ and $\Re q(Pu,u-P u)\geq 0$ for all $u\in D(q)$
\item[(iv)]$P(D(q))\subset D(q)$ and $\Re q(u,u-Pu)\geq -\lambda \norm{u-Pu}^2$ for all $u\in D(q)$
\end{itemize}
\end{proposition}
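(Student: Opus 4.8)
The plan is to establish the cycle $(i)\Rightarrow(iii)\Rightarrow(iv)\Rightarrow(ii)\Rightarrow(i)$, which makes all four statements equivalent. The passage between semigroup and resolvent is routine. Writing $\alpha(T+\alpha)^{-1}=\alpha\int_0^\infty e^{-\alpha t}Q_t\,dt$ (valid for $\alpha>\lambda$) exhibits $\alpha(T+\alpha)^{-1}u$ as a Bochner integral of the points $Q_tu$ against the probability measure $\alpha e^{-\alpha t}\,dt$; since $Q_tu\in C$ for $u\in C$ and $C$ is closed and convex, this gives $(i)\Rightarrow(ii)$. Conversely, the Euler formula $Q_tu=\lim_n(\tfrac{n}{t}(T+\tfrac{n}{t})^{-1})^n u$ keeps every iterate in $C$ once $\tfrac{n}{t}>\lambda$, and closedness of $C$ carries this to the limit, giving $(ii)\Rightarrow(i)$.

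The two middle implications are short. For $(iii)\Rightarrow(iv)$, for $u\in D(q)$ (with $Pu\in D(q)$) I use the identity $\Re q(u,u-Pu)=q(u-Pu)+\Re q(Pu,u-Pu)$: the first summand is $\geq-\lambda\norm{u-Pu}^2$ by the lower bound and the second is $\geq0$ by $(iii)$. For $(iv)\Rightarrow(ii)$ I take $u\in C$ and set $v=\alpha(T+\alpha)^{-1}u\in D(T)\subset D(q)$, so that $Tv=\alpha(u-v)$ and hence $q(v,w)=\alpha\langle u-v,w\rangle$ for all $w\in D(q)$. Applying this with $w=v-Pv\in D(q)$ and using the convex-projection inequality $\Re\langle u-Pv,v-Pv\rangle\leq0$ yields $\Re\langle u-v,v-Pv\rangle\leq-\norm{v-Pv}^2$, so $\Re q(v,v-Pv)\leq-\alpha\norm{v-Pv}^2$. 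Comparing with the bound $\Re q(v,v-Pv)\geq-\lambda\norm{v-Pv}^2$ from $(iv)$ forces $(\alpha-\lambda)\norm{v-Pv}^2\leq0$, whence $v=Pv\in C$.

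The main obstacle is $(i)\Rightarrow(iii)$, and within it the inclusion $P(D(q))\subset D(q)$; the inequality is comparatively easy. Granting $Pu\in D(q)$ for $u\in D(q)$, I use the representation $\Re q(Pu,u-Pu)=\lim_{t\downarrow0}\tfrac1t\Re\langle Pu-Q_tPu,u-Pu\rangle$, and since $Q_tPu\in C$ the projection inequality with $y=Q_tPu$ gives $\Re\langle u-Pu,Pu-Q_tPu\rangle\geq0$, so each difference quotient is nonnegative and $(iii)$ follows. To prove $Pu\in D(q)$ I pass to $(ii)$ and work with the approximating (Yosida) forms $a_\alpha(x,y)=\alpha\langle x-G_\alpha x,y\rangle$, where $G_\alpha=\alpha(T+\alpha)^{-1}$; by the spectral theorem $a_\alpha(x,x)\nearrow q(x)$ as $\alpha\to\infty$ for $x\in D(q)$, so it suffices to bound $a_\alpha(Pu,Pu)$ uniformly in $\alpha$. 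Using $G_\alpha Pu\in C$ (from $(ii)$) and the projection inequality one gets $\Re\langle Pu-G_\alpha Pu,Pu-u\rangle\leq0$, and self-adjointness of $G_\alpha$ turns the decomposition of $a_\alpha(Pu,Pu)$ into the key estimate $a_\alpha(Pu,Pu)\leq\Re a_\alpha(u,Pu)$.

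From here a Cauchy--Schwarz argument closes the gap: applied to the nonnegative form $a_\alpha+\kappa_\alpha\langle\cdot,\cdot\rangle$ it bounds $\Re a_\alpha(u,Pu)$ in terms of $a_\alpha(u,u)^{1/2}a_\alpha(Pu,Pu)^{1/2}$ plus lower-order terms, and the resulting quadratic inequality in $a_\alpha(Pu,Pu)^{1/2}$ yields a bound uniform in $\alpha$ (since $a_\alpha(u,u)\leq q(u)$). The genuinely delicate point is precisely the lower-bound shift by $-\lambda$: it obstructs a direct Cauchy--Schwarz for $a_\alpha$ and forces one to carry the correction $\kappa_\alpha=\tfrac{\alpha\lambda}{\alpha-\lambda}$, which is the exact spectral lower bound of $\alpha T(T+\alpha)^{-1}$. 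This stays bounded as $\alpha\to\infty$ (with $\kappa_\alpha\to\lambda$) and therefore does not spoil the uniform bound, but it is the place where the hypothesis $\alpha>\lambda$ is essential and where the argument requires the most care.
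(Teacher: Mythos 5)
The paper never proves this proposition: it appears in the appendix as a background result, attributed to Ouhabaz (\cite{Ouh96}, Thm.~2.1 and Prop.~2.3, \cite{Ouh99}, Thm.~3; compare \cite{MVV}, Thm.~2.1), and is then used as a black box in the proofs of Theorem \ref{thm_char_domination} and Corollary \ref{cor_proj_pos_form}. So there is no internal proof to compare against; your proposal is a self-contained proof, and it is correct --- in substance it reconstructs Ouhabaz's original resolvent argument. Each step checks out: the Laplace transform and Euler formula give (i)$\Leftrightarrow$(ii); the decomposition $\Re q(u,u-Pu)=q(u-Pu)+\Re q(Pu,u-Pu)$ gives (iii)$\Rightarrow$(iv); in (iv)$\Rightarrow$(ii), testing $q(v,\cdot)=\alpha\langle u-v,\cdot\rangle$ (for $v=\alpha(T+\alpha)^{-1}u$) at $w=v-Pv$ against the projection inequality correctly forces $(\alpha-\lambda)\norm{v-Pv}^2\leq 0$; and the delicate inclusion $P(D(q))\subset D(q)$ in (i)$\Rightarrow$(iii) is handled properly via the Yosida forms $a_\alpha(x,y)=\alpha\langle x-G_\alpha x,y\rangle$: the projection inequality plus self-adjointness of $G_\alpha$ yields $a_\alpha(Pu,Pu)\leq\Re a_\alpha(u,Pu)$, and Cauchy--Schwarz for the shifted nonnegative form $a_\alpha+\kappa_\alpha\langle\cdot,\cdot\rangle$, with $\kappa_\alpha=\alpha\lambda/(\alpha-\lambda)$ the exact lower bound of $\alpha T(T+\alpha)^{-1}$, gives a quadratic inequality $X^2\leq AX+B$ whose coefficients stay bounded as $\alpha\to\infty$ (since $a_\alpha(u,u)\leq q(u)$ and $\kappa_\alpha\to\lambda$), hence $\sup_\alpha a_\alpha(Pu,Pu)<\infty$.

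Two small points you should make explicit if this is written out in full. First, the phrase ``it suffices to bound $a_\alpha(Pu,Pu)$ uniformly'' uses the converse of the monotone convergence statement: if $\sup_\alpha a_\alpha(y,y)<\infty$ then $y\in D(q)$; this is again immediate from $a_\alpha(y,y)=\int \alpha s(s+\alpha)^{-1}\,d\mu_y(s)$ and monotone convergence on $[0,\infty)$ plus boundedness of the integrand on $[-\lambda,0)$, but it is the statement actually being invoked. Second, uniformity of the constants only needs to hold for large $\alpha$ (say $\alpha\geq 2\lambda$, where $\kappa_\alpha\leq 2\lambda$), which is enough because $\alpha\mapsto a_\alpha(Pu,Pu)$ is monotone. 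What your route buys relative to the paper: it would make the article self-contained at this point, removing the dependence of the main domination theorem on the external references, at the cost of roughly a page of standard but careful spectral-calculus arguments.
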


A corollary are the well-known Beurling-Deny criteria that connect the notions of positive and Dirichlet forms with those of positivity preserving and Markovian forms.

\begin{corollary}\label{Beurling_Deny}
Let $(X,m)$ be a measure space, $(P_t)$ a semigroup on $L^2(X,m)$ and $q$ the associated quadratic form.
\begin{itemize}
\item[(a)]The semigroup $(P_t)$ is positivity preserving if and only if $q$ is positive.
\item[(b)]The semigroup $(P_t)$ is Markovian if and only if $q$ is a Dirichlet form.
\end{itemize}
\end{corollary}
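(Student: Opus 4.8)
The plan is to derive both statements from Ouhabaz' invariance criterion (Proposition \ref{invariance_ouhabaz}) by choosing, in each case, the convex set $C$ whose invariance under $(P_t)$ encodes the relevant order property, and then translating condition (iii) of that proposition into Beurling--Deny language. The metric projection onto a closed convex subset of $L^2(X,m)$ acts pointwise, so for $C=L^2(X,m)^+$ one has $P_Cu=u^+$, and for $C=\{f\in L^2(X,m)\mid 0\le f\le 1\}$ one has $P_Cu=(0\vee u)\wedge 1$. Both sets are closed and convex, so Proposition \ref{invariance_ouhabaz} applies with the associated form $q$.

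For part (a) I would take $C=L^2(X,m)^+$. Positivity preservation is exactly $Q_tC\subset C$, so by the equivalence (i)$\Leftrightarrow$(iii) it is equivalent to $u^+\in D(q)$ together with $\Re q(u^+,u-u^+)\ge 0$ for all $u\in D(q)$. Since $u-u^+=-u^-$, this reads $\Re q(u^+,u^-)\le 0$, which is precisely the defining inequality of a positive form (with $P_{\K^+}u=u^+$, $P_{\K^+}(-u)=u^-$). Expanding $q(\abs u)=q(u^++u^-)$ and $q(u)=q(u^+-u^-)$ yields the identity $q(\abs u)-q(u)=4\,\Re q(u^+,u^-)$, together with $\abs u\in D(q)\Leftrightarrow u^+\in D(q)$; hence this is the first Beurling--Deny criterion, and part (a) is done (this is also immediate from Lemma \ref{positive_form_Riesz}).

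For part (b) I would take $C=\{0\le f\le 1\}$, so that $P_Cu=(0\vee u)\wedge 1=:v$, and Markovianity $Q_tC\subset C$ is equivalent, by Proposition \ref{invariance_ouhabaz}, to $v\in D(q)$ and $\Re q(v,u-v)\ge 0$ for all $u\in D(q)$. One direction is a short expansion: since $q$ has lower bound $0$, writing $q(u)=q(v)+2\,\Re q(v,u-v)+q(u-v)\ge q(v)$ shows that condition (iii) implies $q(P_Cu)\le q(u)$, i.e. the second Beurling--Deny criterion. The reverse implication is the only point requiring an idea: I would use that, for $t\in[0,1]$, the element $u_t:=v+t(u-v)$ again projects to $v$, i.e. $P_Cu_t=v$. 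This is checked pointwise on the three regions $\{u\le 0\}$, $\{0\le u\le 1\}$, $\{u\ge 1\}$, or from the variational characterization of the projection, $\langle u_t-v,y-v\rangle=t\langle u-v,y-v\rangle\le 0$ for $y\in C$. Applying the second Beurling--Deny criterion to $u_t\in D(q)$ then gives
\begin{align*}
q(v)\le q(u_t)=q(v)+2t\,\Re q(v,u-v)+t^2 q(u-v),
\end{align*}
whence $0\le 2\,\Re q(v,u-v)+t\,q(u-v)$; letting $t\downto 0$ yields $\Re q(v,u-v)\ge 0$, which is condition (iii).

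The main obstacle is exactly this reverse direction of (b): the Beurling--Deny criteria are stated as the quadratic inequalities $q(P_Cu)\le q(u)$, whereas Proposition \ref{invariance_ouhabaz} delivers the bilinear inequality $\Re q(P_Cu,u-P_Cu)\ge 0$, and passing from the former to the latter is not a pure algebraic rearrangement. The device that makes it work is the observation that the whole segment from $v$ to $u$ has the same projection $v$, which linearizes the contraction inequality and lets the first-order term be extracted in the limit $t\downto 0$.
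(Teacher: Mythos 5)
Your route is the one the paper intends: Corollary \ref{Beurling_Deny} is stated there without a separate proof, as a direct consequence of Proposition \ref{invariance_ouhabaz}, and your translation of its condition (iii) into the two Beurling--Deny criteria is the correct way to fill in the details. For (a), condition (iii) for $C=L^2(X,m)^+$ is precisely the paper's definition of a positive form from Section \ref{chap_positivity}, and your identity $q(\abs{u})-q(u)=4\Re q(u^+,u^-)$ (equivalently, Lemma \ref{positive_form_Riesz}) converts it into the first criterion. For (b), the segment observation $P_C(v+t(u-v))=v$ for $t\in[0,1]$, followed by differentiating the contraction inequality at $t=0$, is exactly the right device for passing from the quadratic inequality back to the bilinear condition (iii); your verification of it is sound, and note that this direction does not even need nonnegativity of $q$, since the term $t^2q(u-v)$ is $o(t)$ regardless of its sign.

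The one genuine gap is the clause ``since $q$ has lower bound $0$'' in the direction Markovian $\Rightarrow$ Dirichlet form. In the paper's terminology a Dirichlet form has lower bound $0$ by definition, so in this direction nonnegativity of $q$ is part of what must be proved, not a hypothesis, and the standing assumptions only provide some lower bound $-\lambda$; your expansion $q(u)=q(v)+2\Re q(v,u-v)+q(u-v)\ge q(v)$ requires $q(u-v)\ge 0$ and therefore does not close the argument as written. (For (a) this issue does not arise as long as ``positive'' is read in the sense of Section \ref{chap_positivity}, as you do; indeed a positivity preserving semigroup need not have a nonnegative form, e.g. $P_t=e^{t}\,\id$.) The repair is standard but lies outside Proposition \ref{invariance_ouhabaz}: a symmetric Markovian semigroup is positivity preserving and hence satisfies $\abs{P_tf}\le P_t\abs{f}$, so it is contractive on $L^\infty\cap L^2$; by symmetry and duality it is contractive on $L^1\cap L^2$, and Riesz--Thorin interpolation then gives $\norm{P_t}_{L^2\to L^2}\le 1$, whence $q\ge 0$. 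With this supplement (or under the usual standing assumption $q\ge 0$ of Dirichlet form theory), your proof is complete.
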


%
%
%
%
\addcontentsline{toc}{section}{References}
\bibliography{mybib}{}

\begin{thebibliography}{HKMW13}

\bibitem[ATH15]{AT15}
C.~Ann{\'e} and N.~Torki-Hamza.
\newblock The {G}auss-{B}onnet operator of an infinite graph.
\newblock {\em Anal. Math. Phys.}, 5(2):137--159, 2015.

\bibitem[B{\'e}r86]{Ber}
P.~H. B{\'e}rard.
\newblock {\em {Spectral Geometry: Direct and Inverse Problems}}.
\newblock Lecture Notes in Mathematics. Springer-Verlag, 1986.

\bibitem[BMS02]{BMS02}
M.~Braverman, O.~Milatovic, and M.~Shubin.
\newblock Essential self-adjointness of schrödinger-type operators on
  manifolds.
\newblock {\em Russian Mathematical Surveys}, 57(4):641, 2002.

\bibitem[FLW14]{FLW}
R.~L. Frank, D.~Lenz, and D.~Wingert.
\newblock Intrinsic metrics for non-local symmetric {D}irichlet forms and
  applications to spectral theory.
\newblock {\em J. Funct. Anal.}, 266(8):4765--4808, 2014.

\bibitem[FOT94]{Fu}
M.~Fukushima, Y.~Oshima, and M.~Takeda.
\newblock {\em {Dirichlet Forms and Symmetric Markov Processes}}.
\newblock De Gruyter Studies in Mathematics Series. De Gruyter, 1994.

\bibitem[Gaf51]{Gaf51}
M.~P. Gaffney.
\newblock The harmonic operator for exterior differential forms.
\newblock {\em Proc. Nat. Acad. Sci. U. S. A.}, 37:48--50, 1951.

\bibitem[GM13]{GM13}
A.~Grigor'yan and J.~Masamune.
\newblock Parabolicity and stochastic completeness of manifolds in terms of the
  {G}reen formula.
\newblock {\em J. Math. Pures Appl. (9)}, 100(5):607--632, 2013.

\bibitem[HKLW12]{HKLW}
S.~Haeseler, M.~Keller, D.~Lenz, and R.~Wojciechowski.
\newblock {Laplacians on infinite graphs: Dirichlet and Neumann boundary
  conditions}.
\newblock {\em Journal of Spectral Theory}, 2:397--432, 2012.

\bibitem[HKMW13]{HKMW}
X.~Huang, M.~Keller, J.~Masamune, and R.~Wojciechowski.
\newblock {A note on self-adjoint extensions of the Laplacian on weighted
  graphs}.
\newblock {\em Journal of Functional Analysis}, 265(8):1556--1578, 2013.

\bibitem[HL15]{HL15}
B.~Hua and Y.~Lin.
\newblock Stochastic completeness for graphs with curvature dimension
  conditions.
\newblock {\em arXiv preprint arXiv:1504.00080}, 2015.

\bibitem[HR15]{HR15}
M.~{Hinz} and L.~{Rogers}.
\newblock {Magnetic fields on resistance spaces}.
\newblock {\em ArXiv e-prints}, January 2015.

\bibitem[HS04]{HS04}
D.~Hundertmark and B.~Simon.
\newblock A diamagnetic inequality for semigroup differences.
\newblock {\em JOURNAL FUR DIE REINE UND ANGEWANDTE MATHEMATIK.}, pages
  107--130, 2004.

\bibitem[HSU77]{HSU}
H.~Hess, R.~Schrader, and D.~A. Uhlenbrock.
\newblock Domination of semigroups and generalization of {K}ato's inequality.
\newblock {\em Duke Math. J.}, 44(4):893--904, 1977.

\bibitem[HT13]{HT13}
M.~Hinz and A.~Teplyaev.
\newblock Dirac and magnetic {S}chr\"odinger operators on fractals.
\newblock {\em J. Funct. Anal.}, 265(11):2830--2854, 2013.

\bibitem[IN90]{IN}
G.~Isac and A.~B. N{\'e}meth.
\newblock Every generating isotone projection cone is latticial and correct.
\newblock {\em J. Math. Anal. Appl.}, 147(1):53--62, 1990.

\bibitem[Kat72]{Kat72}
T.~Kato.
\newblock Schr{\"o}dinger operators with singular potentials.
\newblock {\em Israel Journal of Mathematics}, 13(1-2):135--148, 1972.

\bibitem[Kat95]{Kat}
T.~Kato.
\newblock {\em Perturbation Theory for Linear Operators}.
\newblock Classics in Mathematics. Springer Berlin Heidelberg, 1995.

\bibitem[KL10]{KL10}
M.~Keller and D.~Lenz.
\newblock Unbounded {Laplacians} on graphs: basic spectral properties and the
  heat equation.
\newblock {\em Mathematical Modelling of Natural Phenomena}, 5(04):198--224,
  2010.

\bibitem[KL12]{KL12}
M.~Keller and D.~Lenz.
\newblock {Dirichlet forms and stochastic completeness of graphs and
  subgraphs}.
\newblock {\em Journal für die reine und angewandte Mathematik}, 666:189--223,
  2012.

\bibitem[LL97]{LL97}
E.~H. Lieb and M.~Loss.
\newblock {\em Analysis}, volume~14 of {\em Graduate Studies in Mathematics}.
\newblock American Mathematical Society, Providence, RI, 1997.

\bibitem[LZ71]{LZ}
W.~A.~J. Luxemburg and A.~C. Zaanen.
\newblock {\em Riesz spaces. {V}ol. {I}}.
\newblock North-Holland Publishing Co., Amsterdam-London; American Elsevier
  Publishing Co., New York, 1971.
\newblock North-Holland Mathematical Library.

\bibitem[Man01]{Man01}
A.~Manavi.
\newblock {\em {Zur Störung von dominierten $C_0$-Halbgruppen auf
  Banachfunktionenräumen mit ordnungsstetiger Norm und sektoriellen Formen mit
  singulären komplexen Potentialen}}.
\newblock PhD thesis, TU Dresden, 2001.

\bibitem[Mor62]{M62}
J.-J. Moreau.
\newblock D\'ecomposition orthogonale d'un espace hilbertien selon deux c\^ones
  mutuellement polaires.
\newblock {\em C. R. Acad. Sci. Paris}, 255:238--240, 1962.

\bibitem[MS74]{MS74}
J.~W. Milnor and J.~D. Stasheff.
\newblock {\em Characteristic classes}.
\newblock Princeton University Press, Princeton, N. J.; University of Tokyo
  Press, Tokyo, 1974.
\newblock Annals of Mathematics Studies, No. 76.

\bibitem[MT14]{MT14}
O.~Milatovic and F.~Truc.
\newblock Self-adjoint extensions of discrete magnetic {S}chr\"odinger
  operators.
\newblock {\em Ann. Henri Poincar\'e}, 15(5):917--936, 2014.

\bibitem[MT15]{MT15}
O.~Milatovic and F.~Truc.
\newblock Maximal accretive extensions of {S}chr\"odinger operators on vector
  bundles over infinite graphs.
\newblock {\em Integral Equations Operator Theory}, 81(1):35--52, 2015.

\bibitem[MVV05]{MVV}
A.~Manavi, H.~Vogt, and J.~Voigt.
\newblock Domination of semigroups associated with sectorial forms.
\newblock {\em Journal of Operator Theory}, 54(1):9--26, 2005.

\bibitem[N{\'e}m03]{Nem03}
A.~B. N{\'e}meth.
\newblock Characterization of a {H}ilbert vector lattice by the metric
  projection onto its positive cone.
\newblock {\em J. Approx. Theory}, 123(2):295--299, 2003.

\bibitem[NN09]{NN09}
G.~Nenciu and I.~Nenciu.
\newblock On confining potentials and essential self-adjointness for
  {S}chr\"odinger operators on bounded domains in {$\mathbb R^n$}.
\newblock {\em Ann. Henri Poincar\'e}, 10(2):377--394, 2009.

\bibitem[Ouh96]{Ouh96}
E.~Ouhabaz.
\newblock Invariance of closed convex sets and domination criteria for
  semigroups.
\newblock {\em Potential Analysis}, 5(6):611--625, 1996.

\bibitem[Ouh99]{Ouh99}
E.~Ouhabaz.
\newblock {$L^p$} contraction semigroups for vector valued functions.
\newblock {\em Positivity}, 3(1):83--93, 1999.

\bibitem[Pen76]{Pen76}
R.~C. Penney.
\newblock Self-dual cones in {H}ilbert space.
\newblock {\em J. Functional Analysis}, 21(3):305--315, 1976.

\bibitem[RS75]{RS2}
M.~Reed and B.~Simon.
\newblock {\em Methods of modern mathematical physics. 2. Fourier analysis,
  self-adjointness}, volume~2.
\newblock Elsevier, 1975.

\bibitem[RS11]{RS11}
D.~W. Robinson and A.~Sikora.
\newblock Markov uniqueness of degenerate elliptic operators.
\newblock {\em Ann. Sc. Norm. Super. Pisa Cl. Sci. (5)}, 10(3):683--710, 2011.

\bibitem[Shu01]{Sch01}
M.~Shubin.
\newblock Essential self-adjointness for semi-bounded magnetic {S}chr\"odinger
  operators on non-compact manifolds.
\newblock {\em J. Funct. Anal.}, 186(1):92--116, 2001.

\bibitem[Sim77]{Sim77}
B.~Simon.
\newblock {An Abstract Kato’s inequality for generators of positivity
  preserving semigroups}.
\newblock {\em Indiana University Mathematics Journal}, 26(6), 1977.

\bibitem[Sim79a]{Sim79b}
B.~Simon.
\newblock Kato's inequality and the comparison of semigroups.
\newblock {\em Journal of Functional Analysis}, 32(1):97--101, 1979.

\bibitem[Sim79b]{Sim79}
B.~Simon.
\newblock {Maximal and minimal Schrödinger forms}.
\newblock {\em J. Operator Theory}, 1:37--47, 1979.

\bibitem[Wei00]{Weid}
J.~Weidmann.
\newblock {\em {Lineare Operatoren in Hilberträumen. Teil I Grundlagen}}.
\newblock B. G. Teubner, 2 edition, 2000.

\bibitem[Wer07]{Wer}
D.~Werner.
\newblock {\em Funktionalanalysis}.
\newblock Springer-Verlag, Berlin, extended edition, 2007.

\end{thebibliography}
\bibliographystyle{alpha}

\textsc{Institut für Mathematik, Friedrich-Schiller-Universität Jena, 07737 Jena, Germany}\\
\textit{E-mail address:} \href{mailto:melchior.wirth@uni-jena.de}{melchior.wirth@uni-jena.de}\\
\textit{URL:} \url{http://www.analysis-lenz.uni-jena.de/Team/Melchior+Wirth.html}

\end{document}